\documentclass[leqno,10pt,a4paper]{article}

\usepackage{a4wide}

\title{Monads for framed torsion-free sheaves on multi-blow-ups of the projective plane }

\author{Amar  Abdelmoubine Henni $^1$ }

\date{}

\usepackage{latexsym}
\usepackage{amsmath}
\usepackage{amsfonts}
\usepackage{amssymb}
\usepackage{amsthm}

\usepackage[T1]{fontenc}

\usepackage[all]{xypic}

\usepackage{psfig}

\usepackage[dvips]{graphicx}

\newcommand{\C}{\mathbb{C}}

\newtheorem{thm}{Theorem}[section]
\newtheorem{pr}[thm]{Proposition}
\newtheorem{cor}[thm]{Corollary}
\newtheorem{lem}[thm]{Lemma}
\newtheorem{definition}[thm]{Definition}
\newtheorem{rmk}[thm]{Remark}

\DeclareMathOperator{\ext}{Ext}
\DeclareMathOperator{\Hom}{Hom}
\DeclareMathOperator{\h}{H}

\DeclareMathOperator{\tor}{Tor}
\DeclareMathOperator{\spec}{Spec }
\DeclareMathOperator{\End}{End}

\begin{document}

\maketitle
\footnotetext[1]{henni@ime.unicamp.br}

\begin{abstract}
We construct monads for framed torsion-free sheaves on blow-ups of the complex projective plane at finitely many distinct points. Using these monads we prove that the moduli space of such sheaves is a smooth algebraic variety. Moreover we construct monads for families of such sheaves parameterized by a noetherian scheme $S$ of finite type. A universal monad on the moduli space is introduced and used to prove that the moduli space is fine.

\end{abstract}

\bigskip\bigskip

\section{Introduction}
In this paper we are concerned with the construction of the moduli space $\mathcal{M}^{\tilde{\mathbb{P}}}_{\vec{a},k}$ of framed torsion-free sheaves of a fixed Chern character on a multi-blow-up of the complex projective plane: $\pi:\tilde{\mathbb{P}}\longrightarrow\mathbb{P}^{2}$, by using monadic descriptions which lead to a parametrization of $\mathcal{M}^{\tilde{\mathbb{P}}}_{\vec{a},k}$ in terms of linear data of the ADHM (Atiyah-Drinfel'd-Hitchin-Manin) type \cite{ADHM}. The ADHM data will be useful, at a first step, to give a presentation of the moduli space as a quotient $\mathcal{M}^{\tilde{\mathbb{P}}}_{\vec{a}, k}=P/G$ where $P$ is a space of some matrices satisfying certain conditions and which will be described below. At a second step the monadic description is used to prove that the space $\mathcal{M}^{\tilde{\mathbb{P}}}_{\vec{a},k}$ is a smooth algebraic variety. This is done by generalizing Buchdahl's construction for holomorphic bundles \cite{Buch}, in order to extend it to torsion-free sheaves.
An additional result is the construction of a monad corresponding to an $S-$flat family $\mathcal{F}$ on a product $\tilde{\mathbb{P}}\times S$, where $S$ is a noetherian scheme of finite type. In particular, there is a universal monad on $\tilde{\mathbb{P}}\times\mathcal{M}^{\tilde{\mathbb{P}}}_{\vec{a}, k}$. Using the ADHM presentation of the moduli space $\mathcal{M}^{\tilde{\mathbb{P}}}_{\vec{a}, k}$ and the properties of the universal monads constructed, we prove that the scheme $\mathcal{M}^{\tilde{\mathbb{P}}}_{\vec{a}, k}$ is a fine moduli space.

Another way of treating the moduli space is to show that one can choose a polarization on $\tilde{\mathbb{P}}$ such that a framed sheaf $(\mathcal{E}, \Phi)$ is stable in the sense of Huybrechts and Lehn (Nakajima \cite{Naka1}, Bruzzo and Markushevich \cite{bruzzo}), and using their result that the moduli space of such objects is a quasi-projective scheme \cite[Theorem 0.1]{Huy1},\cite{Huy2}. Moreover this moduli space is fine \cite[Theorem 0.1]{Huy1} and its smoothness follows from the vanishing of the obstruction in \cite[Section 4]{Huy1}, while in the present work, the smoothness proof is based on the ADHM construction, where the moduli space is a quotient of an affine space by a non-reductive group.

The equivalence between the two approaches is established by the fact that in both cases the moduli space is fine, as we prove for the moduli space $\mathcal{M}^{\tilde{\mathbb{P}}}_{\vec{a}, k}$ in this work. This generalizes the result by Nakajima \cite{Naka} and Okonek et. al. \cite{Okonek} in the cases of Hilbert schemes of points on the projective plane, and rank-2 stable bundles on the projective plane, respectively. This comparison also implies that $\mathcal{M}^{\tilde{\mathbb{P}}}_{\vec{a}, k}$ is quasi-projective.

In the present paper all the varieties (or schemes) are over the field $k=\C.$ For every given coherent sheaves $\mathcal{F}$ and $\mathcal{G}$ over a variety $X$, we denote by $\mathcal{G}\longrightarrow\Hom_{\mathcal{O}_X}(\mathcal{F},\mathcal{G})$ the functor from the category of coherent sheaves of $\mathcal{O}_{X}$-modules to the category of abelian groups (then $\Hom_{\mathcal{O}_X}(\mathcal{F},\mathcal{G})$ is the group of homomorphisms of sheaves of $\mathcal{O}_{X}$-modules), and by $\mathcal{G}\longrightarrow\ext^{i}_{\mathcal{O}_X}(\mathcal{F},\mathcal{G})$ its $i$-th right derived functor. We also denote by $\mathcal{G}\longrightarrow\mathcal{H}om_{\mathcal{O}_X}(\mathcal{F},\mathcal{G})$ the functor from the category of coherent sheaves of $\mathcal{O}_{X}$-modules to itself  (then $\mathcal{H}om_{\mathcal{O}_X}(\mathcal{F},\mathcal{G})$ is the sheaf of local homomorphism groups of sheaves of $\mathcal{O}_{X}$-modules defined as the sheaf associated to the pre-sheaf $U\longrightarrow\Hom_{\mathcal{O}_{X}|_U}(\mathcal{F}|_{U},\mathcal{G}|_{U})$), and by $\mathcal{G}\longrightarrow\mathcal{E}xt^{i}_{\mathcal{O}_X}(\mathcal{F},\mathcal{G})$ its $i$-th right derived functor. For more details see \cite{Godement,Grothendieck0,Hart}. To avoid overloading the text we omit the subscript $\mathcal{O}_{X}$ except if needed; for example on $\mathbb{P}^{2}$ we just write $\ext^{i}(\mathcal{F},\mathcal{G})$ instead of $\ext^{i}_{\mathcal{O}_{\mathbb{P}^{2}}}(\mathcal{F},\mathcal{G})$ and $\mathcal{E}xt^{i}(\mathcal{F},\mathcal{G})$ instead of $\mathcal{E}xt^{i}_{\mathcal{O}_{\mathbb{P}^{2}}}(\mathcal{F},\mathcal{G}).$

\paragraph{Acknowledgement}
I would like to thank first my supervisor Ugo Bruzzo, to whom I am very grateful. Thanks a lot to Claudio Rava for useful discussions, remarks and suggestions. Many thanks to the Department of Mathematics of the University of Genova for helping me during my brief visits. I would also like to thank Professor Vladimir Rubtsov for his interest in my work and for help while he was visiting SISSA in summer 2008.

\section{The construction of the monad}
Before starting our construction we introduce some definitions about the objects we shall study in this paper and will state some of their properties.

\begin{definition}[Monad]\label{monad}
A \underline{monad} $M$ on a scheme $X$ is a complex $\xymatrix@C-1.2pc{M: &0\ar[r]& \mathcal{U}\ar[r]^A & \mathcal{W}\ar[r]^B & \mathcal{V}\ar[r]&0}$ of vector bundles $\mathcal{U}$, $\mathcal{W}$ and $\mathcal{V}$ on $X$ which is exact except at $\mathcal{W}$; i.e., the bundle map $A$ is injective, the bundle map $B$ is surjective and their composition $B\circ A$ is zero. The vector bundle $\mathcal{E}:=KerB/ImA$ is the cohomology of the monad $M.$
\end{definition}

\begin{definition}[The monad display]
Each monad $M$ comes with an associated commutative diagram of exact sequences: $$\xymatrix@C-1pc@R-1pc{ & 0\ar[d] & 0\ar[d] &  & \\
& \mathcal{U}\ar@{=}[r]\ar[d] & \mathcal{U}\ar[d]^A &  & \\
0\ar[r]& \mathcal{X} \ar[r]\ar[d] & \mathcal{W}\ar[r]^B\ar[d] & \mathcal{V}\ar@{=}[d]\ar[r] & 0\\
0\ar[r]& \mathcal{E}\ar[r]\ar[d] & Q \ar[r]\ar[d] & \mathcal{V}\ar[r] & 0\\
& 0 & 0 &  &
}$$
called the \underline{display} of $M$, where $\mathcal{X}=KerB$ and $Q=CokerA$. The conditions on the injectivity of the map $A$ and the surjectivity of the map $B$ are called the non-degeneracy conditions for $M.$

\end{definition}

\begin{rmk}

\begin{itemize}
\item[(i)] \textnormal{Let $X$ be a $2-$dimensional scheme. Recall that if the morphism $\mathcal{U}\stackrel{A}{\longrightarrow}\mathcal{V}$ is a morphism in the category of bundles, then $kerA$ is a subbundle of $\mathcal{U}$ and $ImA$ is a subbundle of $\mathcal{V}.$
If the first map $A$ is only injective as a sheaf map, i.e., $A(x)$ fails to be injective, as a bundle map, at a finite number of points $x\in X$, then the cohomology $\mathcal{E}:=KerB/ImA$ of the monad is no longer a vector bundle: the sheaves $\mathcal{U}$ and $KerB$ are locally free thus their quotient $\mathcal{E}$ is locally free if and only if $A(x)$ is injective as a bundle map. Moreover the singularity locus of $\mathcal{E}$ is supported exactly on the points on which $A(x)$ fails to be injective. This locus has codimension $2,$ thus $\mathcal{E}$ is torsion-free. The non-degeneracy conditions are reduced to the surjectivity of $B$ and the injectivity of $A$ at all $X$ except at finitely many points.}

\item[(ii)]\textnormal{Allowing for generic surjectivity of the map $B$ will not be considered here; permitting this would introduce a cohomology $\mathcal{J}:=Coker(B)$ in the last term of the monad. Hence the monad in that case would parameterize a couple of sheaves: $(KerB/ImA, \mathcal{J}).$ This can be thought of as a particular case of a perverse sheaf. Examples of this kind of monads on $\mathbb{P}^{3}$ are treated in \cite{HL}.
}
\end{itemize}
\end{rmk}

Now assume that $X$ is a nonsingular complex projective surface.

\begin{definition}[Framing]
We say that a torsion-free sheaf $\mathcal{F}$, of rank $r$, is \underline{framed} on the divisor $D\subset X$ if $\mathcal{F}|_{D}$ is trivial and there is a fixed holomorphic trivialization $\Phi:\mathcal{F}|_{D}\longrightarrow\mathcal{O}|_{D}^{\oplus r}$ which is an isomorphism called the \underline{framing} of the sheaf $\mathcal{F}$ on $D$.
\end{definition}

In what follows, we take $X=\tilde{\mathbb{P}}$ where $\pi:\tilde{\mathbb{P}}\longrightarrow\mathbb{P}^{2}$ is the blow-up of the projective plane at $n$ distinct points. $\tilde{\mathbb{P}}$ is regular ($\h^{1}(\tilde{\mathbb{P}},\mathcal{O})=0$) and its Picard group (which in our case can be identified with $\h^{2}(\tilde{\mathbb{P}},\mathbb{Z})$) is generated by $n+1$ elements, namely: $Pic(\tilde{\mathbb{P}})=\oplus_{i=1}^{n}E_{i}\mathbb{Z}\oplus H\mathbb{Z},$ where every $E_{i}$ is an exceptional divisor with the following intersection numbers: $E_{i}^{2}=-1$, $E_{i}\cdot E_{j}=0$ for $i\neq j$, $E_{i}\cdot H=0$ and where $H$ is the divisor given by the generic line in $\mathbb{P}^{2}$ and satisfying $H^{2}=1.$ The canonical divisor of the surface $\tilde{\mathbb{P}}$ is given by $K_{\tilde{\mathbb{P}}}=-3H+\Sigma_{i=1}^{n}E_{i}.$ The Poincaré duals of these divisors are given by $(h,e_{1}, \cdots e_{n})$ where $<h,H>=1$, $<e_{i},E_{j}>=\delta_{ij}$ and also $e_{i}\cdot e_{j}=-\delta_{ij}.$
In terms of line bundles, a divisor of the form $D=pH+\Sigma_{i=1}^{n}q_{i}E_{i}$ has the associated line bundle $\mathcal{O}(D)=\mathcal{O}(p,\overrightarrow{q})=\mathcal{O}(pH)\otimes\mathcal{O}(q_{1}E_{1})\otimes\cdots\otimes\mathcal{O}(q_{n}E_{n})$ where $\overrightarrow{q}=(q_{1},\cdots , q_{n}).$ Then the canonical bundle is given by $\mathcal{O}(K_{\tilde{\mathbb{P}}})=\mathcal{O}(-3H+\Sigma_{i=1}^{n}E_{i})=\mathcal{O}(-3,\overrightarrow{1}).$

The Riemann-Roch formula for a line bundle $\mathcal{O}(p,\overrightarrow{q})$ is given by:
$$\chi(\mathcal{O}(p,\overrightarrow{q}))=\frac{1}{2}[(p+1)(p+2)-|\overrightarrow{q}|^{2}+\Sigma_{i=1}^{n}q_{i}].$$
where $|\overrightarrow{q}|^{2}=\Sigma_{i=1}^{n}q_{i}^{2}.$ We also use the fact that a line bundle $\mathcal{O}(p,\overrightarrow{q})$ restricts to $\mathcal{O}(p)$ on the divisors in the linear system $|\mathcal{O}(H)|$ and restricts to $\mathcal{O}(-q_{i})$ on the divisors in the linear system $|\mathcal{O}(E_{i})|.$

Let $\omega\in \h^{4}(\tilde{\mathbb{P}},\mathbb{Z})$ be the fundamental class of $\tilde{\mathbb{P}}$. For a torsion-free sheaf $\mathcal{E}$, of Chern character $ch(\mathcal{E})=r+(aH+\Sigma_{i=1}^{n}a_{i}E_{i})-(k-\frac{a^{2}-|\overrightarrow{a}|^{2}}{2})\omega$, twisted by a line bundle $\mathcal{O}(p,\overrightarrow{q})$ the Riemann-Roch formula is given by:
$$\chi(\mathcal{E}(p,\overrightarrow{q}))=-[k-\frac{a}{2}(a+3)+\frac{1}{2}\Sigma_{i=1}^{n}a_{i}(a_{i}-1)]+
\frac{r}{2}[(p+1)(p+2)-\Sigma_{i=1}^{n}q_{i}(q_{i}-1)]+[ap-\Sigma_{i=1}^{n}a_{i}q_{i}].$$

\bigskip
General results about surfaces and their blow-ups can be found in details in \cite{beauville, barth}, so that one can reconstruct the above description. We remind the reader that, in \cite{Buch}, Buchdahl has a different notation for line bundles; he uses a notation involving the Poincaré duals $h,e_{1}, \cdots e_{n}$ of the divisors $H,E_{1}, \cdots E_{n}$ respectively.

\bigskip

Now we restrict ourselves to the case of torsion-free sheaves $\mathcal{E}$ with Chern character $ch(\mathcal{E})=r+(\Sigma_{i=1}^{n}a_{i}E_{i})-(k+\frac{|\overrightarrow{a}|^{2}}{2})\omega$ which are framed on a fixed rational curve $l_{\infty}$ in the linear system $|H|,$ i.e., we have a fixed trivialization $\Phi:\mathcal{E}|_{l_{\infty}}\longrightarrow\mathcal{O}|_{l_{\infty}}^{\oplus r}$. The direct image $\pi_{\ast}\mathcal{E}$ of $\mathcal{E}$ is a normalized torsion-free sheaf since it is framed on $\mathbb{P}^{2},$ so that $c_{1}(\pi_{\ast}\mathcal{E})=0$ (a rank $r$ sheaf $\mathcal{F}$ on $\mathbb{P}^{2}$ is normalized if its first Chern class $c_{1}(\mathcal{F})$ satisfies $|c_{1}(\mathcal{F})|<r$).

From the natural injection of the sheaf $\mathcal{E}$ in its double dual $\mathcal{E}^{\ast\ast}$, we have the following exact sequence:
\begin{equation}\label{doubledual}
0\longrightarrow\mathcal{E}\longrightarrow\mathcal{E}^{\ast\ast}\longrightarrow\Delta\longrightarrow0
\end{equation}
where $\mathcal{E}^{\ast\ast}$ has Chern character $ch(\mathcal{E}^{\ast\ast})=r+(\Sigma_{i=1}^{n}a_{i}E_{i})-(k-l+\frac{|\overrightarrow{a}|^{2}}{2})\omega$, and $l$ is the length of the quotient sheaf $\Delta$ supported on finitely many points with  $Supp(\Delta)\cap l_{\infty}=\emptyset$.

\begin{pr}
$\h^{0}(\tilde{\mathbb{P}}, \mathcal{E}^{\ast\ast}(p,\overrightarrow{q}))=\h^{0}(\tilde{\mathbb{P}}, \mathcal{E}^{\ast}(p,\overrightarrow{q}))=0$ $\qquad\forall \overrightarrow{q}$ if $p<0$ \quad and

\hspace{2.5cm}$\h^{2}(\tilde{\mathbb{P}}, \mathcal{E}^{\ast\ast}(p,\overrightarrow{q}))=\h^{2}(\tilde{\mathbb{P}}, \mathcal{E}^{\ast}(p,\overrightarrow{q}))=0$ $\qquad\forall \overrightarrow{q}$ if $p=-1,-2.$
\end{pr}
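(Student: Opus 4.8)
The plan is to reduce all the vanishing statements to well-known facts about semistable torsion-free sheaves on $\mathbb{P}^2$ via the projection $\pi:\tilde{\mathbb{P}}\longrightarrow\mathbb{P}^2$, using the projection formula and the fact that $\pi_*\mathcal{O}_{\tilde{\mathbb{P}}}=\mathcal{O}_{\mathbb{P}^2}$, $R^1\pi_*\mathcal{O}_{\tilde{\mathbb{P}}}=0$. First I would observe that the question for $\mathcal{E}^{\ast}$ and for $\mathcal{E}^{\ast\ast}$ is essentially the same: since $\Delta$ is $0$-dimensional, dualizing \eqref{doubledual} gives $\mathcal{E}^{\ast}\cong(\mathcal{E}^{\ast\ast})^{\ast}$, so $\mathcal{E}^{\ast}$ is a reflexive — hence, on a surface, locally free — sheaf, and its Chern character agrees with that of the dual of $\mathcal{E}^{\ast\ast}$. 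So it suffices to prove the statements for the locally free sheaf $\mathcal{F}:=\mathcal{E}^{\ast\ast}$ and for $\mathcal{F}^{\ast}=\mathcal{E}^{\ast}$.

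Next, for the $H^0$-vanishing with $p<0$: twisting by $\mathcal{O}(p,\overrightarrow{q})$ and pushing forward, the Leray spectral sequence plus $\pi_*\mathcal{O}(q_iE_i)=\mathcal{O}$ (for $q_i\le 0$; for $q_i>0$ one uses $\pi_*\mathcal{O}(q_iE_i)=\mathcal{O}$ as well since the $E_i$ are $(-1)$-curves and sections along $E_i$ of a negative bundle vanish) reduces $H^0(\tilde{\mathbb{P}},\mathcal{F}(p,\overrightarrow{q}))$ to $H^0(\mathbb{P}^2,(\pi_*\mathcal{F})(p))$. Now $\pi_*\mathcal{F}$, being the push-forward of a framed locally free sheaf, is torsion-free and semistable on $\mathbb{P}^2$ (as recalled in the excerpt, using \cite{Okonek}, p.~167, since the framing forces $H^0(\mathbb{P}^2,(\pi_*\mathcal{E})(-1))=H^0(\mathbb{P}^2,(\pi_*\mathcal{E})^{\ast}(-1))=0$, and $\pi_*\mathcal{F}$ differs from $\pi_*\mathcal{E}$ only by a zero-dimensional sheaf, which cannot affect these $H^0$'s nor semistability). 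Semistability of a torsion-free sheaf with vanishing degree-normalized slope on $\mathbb{P}^2$ gives $H^0(\mathbb{P}^2,(\pi_*\mathcal{F})(p))=0$ for all $p<0$, and likewise for $(\pi_*\mathcal{F})^{\ast}$ in place of $\pi_*\mathcal{F}$; this handles the first line for both $\mathcal{F}$ and $\mathcal{F}^{\ast}=\mathcal{E}^{\ast}$.

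For the second line, $H^2(\tilde{\mathbb{P}},\mathcal{F}(p,\overrightarrow{q}))=0$ for $p=-1,-2$, I would use Serre duality on $\tilde{\mathbb{P}}$: $H^2(\tilde{\mathbb{P}},\mathcal{F}(p,\overrightarrow{q}))\cong H^0(\tilde{\mathbb{P}},\mathcal{F}^{\ast}\otimes\omega_{\tilde{\mathbb{P}}}(-p,-\overrightarrow{q}))^{\ast}$, and since $\omega_{\tilde{\mathbb{P}}}=\mathcal{O}(-3,\overrightarrow{1})$ this is $H^0(\tilde{\mathbb{P}},\mathcal{F}^{\ast}(-3-p,\overrightarrow{1}-\overrightarrow{q}))^{\ast}$. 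For $p=-1,-2$ the first twist index $-3-p$ is $-2$ or $-1$, both negative, so the already-proved first line (applied to $\mathcal{F}^{\ast}$, which is again locally free and whose push-forward is semistable torsion-free) gives the vanishing for every $\overrightarrow{q}$. The statement "$H^0(\tilde{\mathbb{P}},\mathcal{E}^{\ast}(p,\overrightarrow{q}))=0$" in the second line is then just a repetition of the first line. The main technical obstacle is the bookkeeping of the push-forwards $\pi_*\mathcal{O}(q_iE_i)$ and $R^1\pi_*$ for arbitrary signs of the $q_i$, and checking carefully that passing to $\pi_*$ does not destroy semistability — but this is standard for blow-ups of surfaces at points, since each $E_i$ is a $(-1)$-curve and the relevant higher direct images vanish.
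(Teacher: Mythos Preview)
Your push-forward step contains a real error, not just bookkeeping. The claim $\pi_*\mathcal{O}(q_iE_i)=\mathcal{O}$ for $q_i\le 0$ is false: already $\pi_*\mathcal{O}(-E_i)=\mathcal{I}_{p_i}$, and more generally $\pi_*\mathcal{O}(q_iE_i)=\mathcal{I}_{p_i}^{-q_i}$ for $q_i\le 0$. More seriously, even where $\pi_*\mathcal{O}(q_iE_i)=\mathcal{O}$ holds, this does \emph{not} give $\pi_*(\mathcal{F}(0,\overrightarrow{q}))\cong\pi_*\mathcal{F}$; the projection formula only applies to twists by pullbacks, and $\mathcal{O}(0,\overrightarrow{q})$ is not pulled back from $\mathbb{P}^2$. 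A concrete counterexample to your reduction: with $\mathcal{F}=\mathcal{O}_{\tilde{\mathbb{P}}}$, $p=0$, $\overrightarrow{q}=-E_i$, one has $H^0(\tilde{\mathbb{P}},\mathcal{O}(-E_i))=0$ while $H^0(\mathbb{P}^2,(\pi_*\mathcal{O})(0))=\mathbb{C}$. So the identity you need simply fails, and the appeal to semistability of $\pi_*\mathcal{F}$ does not by itself control $H^0(\tilde{\mathbb{P}},\mathcal{F}(p,\overrightarrow{q}))$ for arbitrary $\overrightarrow{q}$.

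The paper avoids this entirely by arguing directly on $\tilde{\mathbb{P}}$. From the restriction sequence
\[
0\longrightarrow \mathcal{E}^{\ast\ast}(-(p+1),\overrightarrow{q})\longrightarrow \mathcal{E}^{\ast\ast}(-p,\overrightarrow{q})\longrightarrow \mathcal{E}^{\ast\ast}_{|l_\infty}(-p)\longrightarrow 0
\]
and the framing $\mathcal{E}^{\ast\ast}_{|l_\infty}\cong\mathcal{O}^{\oplus r}$, one gets $H^0(\mathcal{E}^{\ast\ast}_{|l_\infty}(-p))=0$ for $p>0$, hence $H^0(\tilde{\mathbb{P}},\mathcal{E}^{\ast\ast}(-(p+1),\overrightarrow{q}))\cong H^0(\tilde{\mathbb{P}},\mathcal{E}^{\ast\ast}(-p,\overrightarrow{q}))$ for all $p\ge 1$; since these groups vanish for $p$ large, they vanish for all $p\ge 1$. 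The same restriction sequence, using $H^1(l_\infty,\mathcal{O}^{\oplus r}(-p))=0$ for $p<2$, handles the $H^2$ statements, and Serre duality swaps the roles of $\mathcal{E}^{\ast\ast}$ and $\mathcal{E}^{\ast}$ exactly as you do. Your Serre-duality step for $H^2$ is fine and agrees with the paper; it is only the $H^0$ reduction that needs to be replaced. (Your approach can be repaired by observing that $\pi_*(\mathcal{F}(0,\overrightarrow{q}))$ is itself torsion-free and still trivial on $l_\infty\subset\mathbb{P}^2$, since $l_\infty$ misses the exceptional locus, and then running the same $l_\infty$-restriction argument on $\mathbb{P}^2$; but at that point you have essentially reproduced the paper's proof one level down.)
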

\begin{proof}
The first vanishing follows by taking the direct image $\pi_{\ast}(\mathcal{E}^{\ast\ast})$ of $\mathcal{E}^{\ast\ast}$. By using the framing condition one can easily verify that, on $\mathbb{P}^{2}$, the group $\h^{0}(\mathbb{P}^{2}, \pi_{\ast}(\mathcal{E}^{\ast\ast})(p))$ vanishes for $p<0.$ But $\h^{0}(\mathbb{P}^{2}, \pi_{\ast}(\mathcal{E}^{\ast\ast})(p))\cong\h^{0}(\tilde{\mathbb{P}}, \mathcal{E}^{\ast\ast}(p,\overrightarrow{q})),$ thus the latter is zero for $p<0$. This also holds for the dual $\mathcal{E}^{\ast}$ since it is also framed. Finally by Serre duality the last two conditions follow easily.
\end{proof}

\begin{cor}\label{cor}
$\h^{0}(\tilde{\mathbb{P}}, \mathcal{E}(p,\overrightarrow{q}))=0$ $\qquad\forall \overrightarrow{q}$ if $p<0$ \quad and

\hspace{2cm} $\h^{2}(\tilde{\mathbb{P}}, \mathcal{E}(p,\overrightarrow{q}))=0$ $\qquad\forall \overrightarrow{q}$ if $p=-1,-2.$
\end{cor}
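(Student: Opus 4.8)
The plan is to deduce the corollary directly from the previous proposition by feeding the double-dual exact sequence \eqref{doubledual} into the long exact cohomology sequence, after twisting by an arbitrary line bundle $\mathcal{O}(p,\overrightarrow{q})$. Since $\Delta$ is a torsion sheaf supported on finitely many points disjoint from $l_\infty$, twisting by any line bundle leaves it unchanged up to isomorphism (the local rings at those points see a trivialized bundle), so $\mathcal{H}om$-twists of $\Delta$ are again $\Delta$-like zero-dimensional sheaves; in any case $H^i(\tilde{\mathbb{P}},\Delta\otimes\mathcal{O}(p,\overrightarrow{q}))=0$ for $i\geq 1$ because $\Delta$ has zero-dimensional support. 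This is the only new geometric input needed.

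Concretely, I would argue as follows. From \eqref{doubledual} tensored with $\mathcal{O}(p,\overrightarrow{q})$ we get
\[
0\longrightarrow H^0(\tilde{\mathbb{P}},\mathcal{E}(p,\overrightarrow{q}))\longrightarrow H^0(\tilde{\mathbb{P}},\mathcal{E}^{\ast\ast}(p,\overrightarrow{q}))\longrightarrow H^0(\tilde{\mathbb{P}},\Delta\otimes\mathcal{O}(p,\overrightarrow{q}))\longrightarrow\cdots
\]
The injectivity of $H^0(\mathcal{E}(p,\overrightarrow{q}))\hookrightarrow H^0(\mathcal{E}^{\ast\ast}(p,\overrightarrow{q}))$ together with the vanishing $H^0(\tilde{\mathbb{P}},\mathcal{E}^{\ast\ast}(p,\overrightarrow{q}))=0$ for $p<0$ from the Proposition immediately gives $H^0(\tilde{\mathbb{P}},\mathcal{E}(p,\overrightarrow{q}))=0$ for all $\overrightarrow{q}$ when $p<0$. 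For the $H^2$ statement, the relevant piece of the long exact sequence is
\[
\cdots\longrightarrow H^2(\tilde{\mathbb{P}},\mathcal{E}(p,\overrightarrow{q}))\longrightarrow H^2(\tilde{\mathbb{P}},\mathcal{E}^{\ast\ast}(p,\overrightarrow{q}))\longrightarrow H^2(\tilde{\mathbb{P}},\Delta\otimes\mathcal{O}(p,\overrightarrow{q}))\longrightarrow 0,
\]
but I want surjectivity of the map into $H^2(\mathcal{E}(p,\overrightarrow{q}))$ from the left, so I instead use the tail
\[
H^1(\tilde{\mathbb{P}},\Delta\otimes\mathcal{O}(p,\overrightarrow{q}))\longrightarrow H^2(\tilde{\mathbb{P}},\mathcal{E}(p,\overrightarrow{q}))\longrightarrow H^2(\tilde{\mathbb{P}},\mathcal{E}^{\ast\ast}(p,\overrightarrow{q}))\longrightarrow 0.
\]
Since $\Delta$ is zero-dimensional, $H^1(\tilde{\mathbb{P}},\Delta\otimes\mathcal{O}(p,\overrightarrow{q}))=0$, so $H^2(\tilde{\mathbb{P}},\mathcal{E}(p,\overrightarrow{q}))\cong H^2(\tilde{\mathbb{P}},\mathcal{E}^{\ast\ast}(p,\overrightarrow{q}))$, which vanishes for $p=-1,-2$ by the Proposition. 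This settles both assertions.

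There is essentially no obstacle here; the corollary is a formal consequence of the Proposition plus the elementary fact that a skyscraper-type sheaf has no higher cohomology. The only point requiring a line of care is making sure the Proposition is being quoted with the correct sign convention on $p$ — the Proposition is stated for twists $\mathcal{E}^{\ast\ast}(p,\overrightarrow{q})$ with $p<0$ and $p=-1,-2$, which is exactly the range appearing in the corollary, so no re-indexing is needed. One should also note that the second vanishing in the Proposition ($H^2$ for $p=-1,-2$) is what we use, not the $H^0$ of $\mathcal{E}^\ast$, so the statement transfers cleanly to $\mathcal{E}$ itself via \eqref{doubledual}.
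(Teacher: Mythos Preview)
Your argument is correct and is precisely the intended one: the paper marks the corollary with a bare $\square$, signalling that it follows immediately from the preceding Proposition via the double-dual sequence \eqref{doubledual}, which is exactly what you do. The only inputs are the injection $H^0(\mathcal{E}(p,\overrightarrow{q}))\hookrightarrow H^0(\mathcal{E}^{\ast\ast}(p,\overrightarrow{q}))$ and the vanishing of higher cohomology of the zero-dimensional sheaf $\Delta$, both of which you identify and use correctly.
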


We also use the following form of Serre-Grothendieck duality for coherent sheaves \cite{Grothendieck0,Hart1}:
\begin{thm}
On a smooth algebraic projective variety $X$ of dimension $n$ over an algebraically closed field $k$, and for every two coherent sheaves $\mathcal{F}$ and $\mathcal{J}$, the following formula holds:
$$\ext^{i}(\mathcal{F},\mathcal{J})=\ext^{n-i}(\mathcal{J},\mathcal{F}\otimes\omega_{X})^{\ast}$$
where $\omega_{X}$ is the canonical sheaf.
\end{thm}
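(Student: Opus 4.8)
The plan is to deduce the stated $Ext$--$Ext$ symmetry from the ``fundamental'' form of Serre duality by homological algebra. First I would record the input I take for granted: on a smooth projective $X$ of dimension $n$ the canonical sheaf $\omega_{X}$ is a dualizing sheaf, i.e. there is a trace map $H^{n}(X,\omega_{X})\to K$ such that the induced pairing gives, for every coherent sheaf $\mathcal{G}$ and every $i$, a natural isomorphism
$$Ext^{n-i}(\mathcal{G},\omega_{X})\cong H^{i}(X,\mathcal{G})^{\ast}.$$
This is the genuinely geometric statement; it is established by treating $X=\mathbb{P}^{n}$ first (explicit cohomology of line bundles, a resolution of $\mathcal{G}$ by finite direct sums of line bundles, and comparison of the two contravariant $\delta$-functors $Ext^{n-\bullet}(-,\omega_{\mathbb{P}^{n}})$ and $H^{\bullet}(\mathbb{P}^{n},-)^{\ast}$ in $\mathcal{G}$), and then descending to an arbitrary smooth projective $X$ via a closed immersion $X\hookrightarrow\mathbb{P}^{N}$, using the identification $\omega_{X}\cong\mathcal{E}xt^{\,N-n}_{\mathbb{P}^{N}}(\mathcal{O}_{X},\omega_{\mathbb{P}^{N}})$ together with the change-of-rings comparison of $Ext$ groups.

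Granting this, I would first prove the formula when $\mathcal{F}$ is locally free of finite rank. Then $R\mathcal{H}om(\mathcal{F},\mathcal{J})=\mathcal{F}^{\ast}\otimes\mathcal{J}$, so $Ext^{i}(\mathcal{F},\mathcal{J})=H^{i}(X,\mathcal{F}^{\ast}\otimes\mathcal{J})$; on the other hand, pulling the locally free factor $\mathcal{F}$ out of the second argument gives $Ext^{n-i}(\mathcal{J},\mathcal{F}\otimes\omega_{X})\cong Ext^{n-i}(\mathcal{J}\otimes\mathcal{F}^{\ast},\omega_{X})$, which by the fundamental duality equals $H^{i}(X,\mathcal{J}\otimes\mathcal{F}^{\ast})^{\ast}$. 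Since $X$ is projective these spaces are finite dimensional, so dualizing once more yields the asserted isomorphism, and one checks it is natural in $\mathcal{J}$ and compatible with the trace.

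To reach an arbitrary coherent $\mathcal{F}$, I would use that on a smooth variety $\mathcal{F}$ admits a finite locally free resolution $0\to\mathcal{L}_{m}\to\cdots\to\mathcal{L}_{0}\to\mathcal{F}\to0$. Both sides of the asserted equality are, as functors of $\mathcal{F}$, contravariant cohomological $\delta$-functors: the left side tautologically, and the right side because $-\otimes\omega_{X}$ is exact and $K$-linear duality is exact on finite dimensional spaces, so a short exact sequence in $\mathcal{F}$ induces long exact sequences on both sides (with the connecting maps of the right side running in the correct direction once one unravels the double dual). After exhibiting a morphism of $\delta$-functors that restricts on locally free sheaves to the isomorphism of the previous paragraph, a dimension shift along the resolution---breaking it into short exact sequences and inducting on $m$ via the five lemma---forces it to be an isomorphism for every $\mathcal{F}$ in every degree. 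Conceptually this whole step is Grothendieck--Verdier duality for $X\to\operatorname{Spec}K$ applied to the perfect complex $R\mathcal{H}om(\mathcal{F},\mathcal{J})$, combined with the biduality isomorphism $R\mathcal{H}om(R\mathcal{H}om(\mathcal{F},\mathcal{J}),\mathcal{O}_{X})\cong R\mathcal{H}om(\mathcal{J},\mathcal{F})$, valid for coherent sheaves on a smooth variety since they are perfect complexes.

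The main obstacle is the fundamental duality isomorphism itself---constructing $\omega_{X}$ with an honest trace map and proving the resulting pairing is perfect---not the formal bookkeeping that extracts the $Ext$--$Ext$ symmetry from it; the reduction to $\mathbb{P}^{n}$ and the $\mathcal{E}xt$-sheaf description of $\omega_{X}$ on the ambient projective space are the real work, which is why in practice one simply cites it. A secondary point requiring care is the \emph{naturality} of every isomorphism used, so that the $\delta$-functor comparison is legitimate; this is precisely what the derived-category reformulation makes automatic.
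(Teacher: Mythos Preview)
Your sketch is a reasonable outline of the standard proof of Serre--Grothendieck duality, but you should be aware that the paper does not prove this theorem at all: it is simply stated as a known result (introduced with ``We also use the following form of Serre-Grothendieck duality for coherent sheaves'') and then invoked as a tool in the subsequent cohomological computations. There is no argument to compare your proposal against.

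That said, your plan is the textbook route---reduce to the fundamental pairing $Ext^{n-i}(\mathcal{G},\omega_X)\cong H^i(X,\mathcal{G})^\ast$, verify the $Ext$--$Ext$ formula for $\mathcal{F}$ locally free by pulling the locally free factor across, and then extend to arbitrary coherent $\mathcal{F}$ by a finite locally free resolution and a $\delta$-functor / five-lemma induction. The derived-category gloss at the end (perfectness of coherent sheaves on a smooth variety, biduality, and Grothendieck--Verdier duality for the structure morphism) is the cleanest conceptual packaging and would be the right citation if you wanted to avoid spelling out the dimension shift. Nothing in your outline is wrong; it is just far more than the paper itself supplies.
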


\begin{definition}[Non-locally free monad]
A \underline{non-locally free monad} is a complex $$\xymatrix@C-1.2pc{M: &0\ar[r]& \mathcal{U}\ar[r]^A & \mathcal{W}\ar[r]^B & \mathcal{V}\ar[r]&0}$$ as in definition \ref{monad} in which $\mathcal{U}$, $\mathcal{V}$ and $\mathcal{W}$ are coherent sheaves (not necessarily locally free sheaves).
\end{definition}

We start our program by showing that a framed torsion-free sheaf $\mathcal{E}$ can be described as the cohomology of a non-locally free monad with a torsion-free sheaf in its middle term, and locally free sheaves in the first and the third terms. Since the maps involved are sheaf maps, instead of being bundle maps, one cannot obtain an ADHM description. This problem will be solved later by constructing a monad, out of the obtained non-locally free monad, and proving that its cohomology is the starting sheaf $\mathcal{E}.$ This construction is mainly a generalization of \cite[Section 1]{Buch} to the case of framed torsion-free sheaves.

First define the spaces $B_{i}:=\Hom(\mathcal{E},\mathcal{O}|_{E_{i}}(-1))^{\ast}$ and let $\mathcal{B}_{1}=\oplus_{i=1}^{n} B_{i}(1,-E_{i})$. Then the extensions of the form $$0\longrightarrow\mathcal{E}\longrightarrow Q_{1}\longrightarrow\mathcal{B}_{1}\longrightarrow0$$
are classified by the group $\ext^{1}(\mathcal{B}_{1},\mathcal{E})\cong\oplus_{i=1}^{n} B_{i}^{\ast}\otimes \ext^{1}(\mathcal{E}, \mathcal{O}(-2,\overrightarrow{1}-E_{i}))^{\ast}$.
Applying the functor $\Hom(\mathcal{E},\cdot)$ to the sequence
\begin{equation}\label{rest2}
0\longrightarrow\mathcal{O}(0,-E_{i})\longrightarrow\mathcal{O}\longrightarrow\mathcal{O}|_{E_{i}}\longrightarrow0
\end{equation}
after twisting by $\mathcal{O}(-2,\overrightarrow{1})$ one obtains $$\xymatrix{\Hom(\mathcal{E},\mathcal{O}(-2,\overrightarrow{1}))\ar[r]& \Hom(\mathcal{E},\mathcal{O}|_{E_{i}}(-1))\ar[r]& \ext^{1}(\mathcal{E},\mathcal{O}(-2,\overrightarrow{1}-E_{i}))}$$
but $\Hom(\mathcal{E},\mathcal{O}(-2,\overrightarrow{1}))=\h^{2}(\tilde{\mathbb{P}}, \mathcal{E}(-1,0))^{\ast}=0$ by the corollary above. Then the map
$$\xymatrix{B_{i}^{\ast}\ar@{^{(}->}[r]& \ext^{1}(\mathcal{E},\mathcal{O}(-2,\overrightarrow{1}-E_{i}))}$$
is injective. This implies that the map
$$\xymatrix{\ext^{1}(\mathcal{E},\mathcal{O}(-2,\overrightarrow{1}-E_{i}))^{\ast}\ar[r]^{\qquad\qquad r}& B_{i}}$$
is surjective. Thus there exists an extension $Q_{1}$ which is mapped to the identity in $\End(B_{i})$ for each $i$ under the composition of the projection on the $i-$th factor and the map $r$ above.

Let us now define $A_{i}:=\ext^{1}(\mathcal{E},\mathcal{O}|_{E_{i}}(-1))^{\ast}$ and $\mathcal{A}_{1}:=\oplus_{i=1}^{n}A_{i}(-1,E_{i}).$ The extensions of the form $$0\longrightarrow\mathcal{A}_{1}\longrightarrow X_{1}\longrightarrow\mathcal{E}\longrightarrow0$$
are classified by $\ext^{1}(\mathcal{E},\mathcal{A}_{1})\cong\oplus_{i=1}^{n} A_{i}\otimes \ext^{1}(\mathcal{E},\mathcal{O}(-1,E_{i})).$ Applying the functor $\Hom(\mathcal{E}, \cdot)$ to \eqref{rest2} twisted by $\mathcal{O}(-1,E_{i})$ one has the following exact sequence:
\begin{equation}\label{Adual}
\ext^{1}(\mathcal{E},\mathcal{O}(-1,E_{i}))\longrightarrow \underbrace{\ext^{1}(\mathcal{E},\mathcal{O}|_{E_{i}}(-1))}_{A_{i}^{\ast}}\longrightarrow \ext^{2}(\mathcal{E},\mathcal{O}(-1,0))
\end{equation}
where $\ext^{2}(\mathcal{E},\mathcal{O}(-1,0))=\Hom(\mathcal{O}(-1,0),\mathcal{E}(-3,\overrightarrow{1}))^{\ast} =\h^{0}(\tilde{\mathbb{P}},\mathcal{E}(-2,\overrightarrow{1}))^{\ast}=0$ also by the corollary above. Thus there exists an extension in $\ext^{1}(\mathcal{E},\mathcal{A}_{1})$ which maps to the identity in $\End(A_{i})$ for every $i=1,...,n.$

To construct a display of a non-locally free monad one may apply the following:
\begin{pr}\cite[Proposition \textbf{2.2.3}]{King}\label{King223}
Suppose we are given two extensions
\begin{align}&\xymatrix@C-1pc{0\ar[r] & \mathcal{U}\ar[r]^{i_{1}} & \mathcal{X}\ar[r]^{j_{1}} & \mathcal{E}\ar[r] & 0} \label{ext1}\\
&\xymatrix@C-1pc{0\ar[r] & \mathcal{E}\ar[r]^{i_{4}} &  \mathcal{Q}\ar[r]^{j_{4}} & \mathcal{V}\ar[r] & 0} \label{ext2}.
\end{align} of torsion free sheaves. Then we can fit them into a non-locally free monad display if and only if the double extension
\begin{equation}\label{doublext}
\xymatrix@C-1pc{0\ar[r] & \mathcal{U}\ar[r] & \mathcal{X}\ar[r] &  \mathcal{Q}\ar[r] & \mathcal{V}\ar[r] & 0},
\end{equation}
i.e., their $\ext$-product in $\ext^{2}(\mathcal{V},\mathcal{U}),$ is trivial. Furthermore any two way of completing the display differ by an action of $\ext^{1}(\mathcal{V},\mathcal{U}).$
\end{pr}
\begin{proof}
By applying the contravariant functor $\Hom(\bullet,\mathcal{U})$ on the second sequence one gets $$\xymatrix@C-1pc{\ext^{1}(\mathcal{V},\mathcal{U})\ar[r]^{\tilde{j_{4}}} & \ext^{1}(\mathcal{Q},\mathcal{U})\ar[r]^{\tilde{i_{4}}} & \ext^{1}(\mathcal{E},\mathcal{U}) \ar[r]^{\delta} & \ext^{2}(\mathcal{V},\mathcal{U})\ar[r] & \cdots}.$$ The map $\delta$ sends the extension \eqref{ext1} to the extension \eqref{doublext} in $\ext^{2}(\mathcal{V},\mathcal{U}).$ Conversely when the double extension \eqref{doublext} is trivial one can find an extension
$$\xymatrix@C-1pc{0\ar[r] & \mathcal{U}\ar[r]^{\alpha} &  \mathcal{W}\ar[r]^{j_{2}} & \mathcal{Q}\ar[r] & 0}$$ in $\ext^{1}(\mathcal{Q},\mathcal{U})$ which is mapped to \eqref{ext1} by $\tilde{i_{4}}.$ This implies that there is a uniquely determined map $i_{3}:\mathcal{X}\longrightarrow\mathcal{W}$ such that $$\xymatrix@C-1pc@R-1pc{0\ar[r]& \mathcal{U}\ar[r]\ar@{=}[d]&\mathcal{X}\ar[r]\ar[d]^{i_{3}}&\mathcal{E}\ar[r]\ar[d]^{i_{4}}&0 \\ 0\ar[r]&\mathcal{U}\ar[r]&\mathcal{W}\ar[r]&\mathcal{Q}\ar[r]&0}$$ commutes. Finally, putting $\beta=j_{4}\circ j_{2}$ one has the sequence
$$\xymatrix@C-1pc{0\ar[r] & \mathcal{X}\ar[r]^{i_{3}} &  \mathcal{W}\ar[r]^{\beta} & \mathcal{V}\ar[r] & 0}$$
required to complete the display.

One can verify that $\ext^{1}(\mathcal{V},\mathcal{U})$ acts naturally on the space of all monads with ends $\mathcal{U}$ and $\mathcal{V}.$ The monad we obtained in this way belongs to the orbit of this action.

\end{proof}

In our case one has $\ext^{2}(\mathcal{B}_{1},\mathcal{A}_{1})\cong\oplus_{i,j}^{n}A_{i}\otimes B^{\ast}_{j}\otimes \h^{2}(\tilde{\mathbb{P}},\mathcal{O}(-2,E_{i}+E_{j}))=0$, thus there exists a sheaf $W_{1}$ and exact sequences
$$0\longrightarrow\mathcal{A}_{1}\longrightarrow W_{1}\longrightarrow Q_{1}\longrightarrow0$$
$$0\longrightarrow X_{1}\longrightarrow W_{1}\longrightarrow\mathcal{B}_{1}\longrightarrow0$$
which fit into the following commutative diagram:
\begin{equation}\label{display}
\xymatrix@R-1pc@C-1pc{         &     0       \ar[d]           &      0      \ar[d]        &                               &         \\
           & \mathcal{A}_{1} \ar@{=}[r]\ar[d] & \mathcal{A}_{1}  \ar[d] &                 &         \\
          0 \ar[r] &     X_{1}       \ar[r]\ar[d]     & W_{1} \ar[r] \ar[d] & \mathcal{B}_{1} \ar[r] \ar@{=}[d] &    0    \\
          0 \ar[r] & \mathcal{E} \ar[r]\ar[d]     &      Q_{1}     \ar[r] \ar[d] & \mathcal{B}_{1} \ar[r]      & 0 \\
                   &     0                        &      0                    &    &       \\
}
\end{equation}
and thus one has a non-locally free monad
\begin{equation}\label{M1}
M_{1}:\quad 0\longrightarrow\mathcal{A}_{1}\longrightarrow W_{1}\longrightarrow\mathcal{B}_{1}\longrightarrow0
\end{equation}
with cohomology the torsion-free sheaf $\mathcal{E}.$ Note that when the extension $Q_{1}$ was constructed, we also required that the induced maps $B_{i}\longrightarrow B_{i},$ in cohomology, are all isomorphisms.

For further computations one needs to know the Chern characters of the sheaves involved in the display. By the Riemann-Roch theorem one, first, has $\chi(\mathcal{E},\mathcal{O}|_{E_{i}}(-1))=a_{i}$, where the Euler characteristic of a pair of coherent sheaves $(\mathcal{F}, \mathcal{G})$ on an algebraic variety $X$ is $\chi(\mathcal{F},\mathcal{G}):=\Sigma_{i=0}^{dimX}(-1)^{i}dim\ext^{i}(\mathcal{F},\mathcal{G})$, see \cite[Definition \textbf{6.1.1}]{Huy}. If we put $d_{i}=dim\ext^{1}(\mathcal{E}, \mathcal{O}|_{E_{i}}(-1))=dimA_{i}$ and $d'_{i}=dim\Hom(\mathcal{E}, \mathcal{O}|_{E_{i}}(-1))=dimB_{i}$ then $d_{i}-d'_{i}=-a_{i}$. We also put $D=\Sigma_{i=1}^{n}d_{i}=rk\mathcal{A}_{1}$ and $D'=rk\mathcal{B}_{1}=\Sigma_{i=1}^{n}d'_{i}$, then $D-D'=-\Sigma_{i=1}^{n}a_{i}:=-\bar{a}$.  It follows that:
$$ch(\mathcal{A}_{1})=D-[DH-\Sigma_{i=1}^{n}d_{i}E_{i}]$$
$$ch(\mathcal{B}_{1})=D'+[D'H-\Sigma_{i=1}^{n}d'_{i}E_{i}]$$

By the additivity of the Chern character on exact sequences one has the following:
$$ch(X_{1})=ch(\mathcal{A}_{1})+ch(\mathcal{E})=(r+D)-[DH-\Sigma_{i=1}^{n}(d_{i}+a_{i})E_{i}]-(k+\frac{|\overrightarrow{a}|^{2}}{2})\omega$$
$$ch(Q_{1})=ch(\mathcal{B}_{1})+ch(\mathcal{E})=(r+D')+[D'H-\Sigma_{i=1}^{n}(d'_{i}-a_{i})E_{i}]-(k+\frac{|\overrightarrow{a}|^{2}}{2})\omega$$
$$ch(W_{1})=ch(\mathcal{A}_{1})+ch(Q_{1})=ch(\mathcal{B}_{1})+ch(X_{1})=(r+D+D')+\bar{a}H-(k+\frac{|\overrightarrow{a}|^{2}}{2})\omega$$

From the lower row of the display \eqref{display}, it follows that $Q_{1}$ is torsion-free since $\mathcal{E}$ is torsion-free and $\mathcal{B}_{1}$ is locally free. Similarly from the middle column of the display it follows that $W_{1}$ is also torsion-free. This implies from the middle row that  $X_{1}$ is a torsion-free sheaf. Furthermore, by dualizing twice the exact sequences in the display one can show that $\Delta_{X}\cong\Delta_{W}$ and $\Delta_{Q}\cong\Delta,$ where we define $\Delta_{W}:=W_{1}^{\ast\ast}/W_{1}$, $\Delta_{X}:=X_{1}^{\ast\ast}/X_{1}$ and $\Delta_{Q}:=Q_{1}^{\ast\ast}/Q_{1}.$ So the complex \eqref{M1} has the disadvantage of containing a non-locally free sheaf in its middle term. To solve this problem we need to construct a monad according to the definition \textbf{\ref{monad}},  i.e., with holomorphic bundles in all its terms. This will be done in few steps, but first we need the following:
\begin{pr}\label{push&pull}
Let $\mathcal{F}$ be a torsion-free sheaf on $\tilde{\mathbb{P}}$, satisfying $$\h^{0}(E_{i}, \mathcal{F}|_{E_{i}}(-1))=0,$$
$$\h^{1}(E_{i}, \mathcal{F}|_{E_{i}}(-1))=0,$$  for some $i$, and let $\Delta_{\mathcal{F}}$ be the quotient sheaf $(\mathcal{F}^{\ast\ast}/\mathcal{F})$. Then $Supp(\Delta_{\mathcal{F}})\cap E_{i}=\emptyset$ and $\mathcal{F}$ is trivial on the divisor $E_{i}$. Moreover if this holds for every $i$, then $\mathcal{F}\cong \pi^{\ast}(\pi_{\ast}\mathcal{F})$.
\end{pr}

\begin{proof}

Let us consider the sequence $0\longrightarrow\mathcal{F}\longrightarrow\mathcal{F}^{\ast\ast}\longrightarrow\Delta_{\mathcal{F}}\longrightarrow0.$ Restricting to $E_{i}$ we have:
$$\xymatrix@R-1.5pc@C-1pc{0\ar[r]&T\ar[r]&\mathcal{F}|_{E_{i}}\ar[rr]\ar@{-->}[rd]&&\mathcal{F}^{\ast\ast}|_{E_{i}}\ar[r] &\Delta_{\mathcal{F}}\otimes\mathcal{O}|_{E_{i}}\ar[r]&0\\
&&&\mathcal{L}\ar@{-->}[rd]\ar@{-->}[ru]&&&\\
&&0\ar@{-->}[ru]&&0&&
}$$ where $T:=\tor^{1}(\mathcal{O}|_{E_{i}}, \Delta_{\mathcal{F}})$ and $\mathcal{L}$ is locally free. Since $E_{i}$ is a curve, then $\mathcal{F}|_{E_{i}}\cong\mathcal{L}\oplus T.$ The first condition means that $\h^{0}(E_{i}, \mathcal{L}(-1))\oplus \h^{0}(E_{i}, T(-1))=0$, thus $T=\tor^{1}(\mathcal{O}|_{E_{i}}, \Delta_{\mathcal{F}})=0$. It follows that $Supp(\Delta_{\mathcal{F}})\cap E_{i}=\emptyset$. Then $\Delta_{\mathcal{F}}\otimes\mathcal{O}|_{E_{i}}=0$, and the sheaf $\mathcal{F}|_{E_{i}}$ is isomorphic to $\mathcal{F}^{\ast\ast}|_{E_{i}}.$ The second condition means that $\mathcal{F}|_{E_{i}}$ is trivial on $E_{i}$.
Now if the conditions hold for every divisor $E_{i}$, then $\mathcal{F}|_{E_{i}}$ is trivial on every $E_{i}$ and $Supp(\Delta_{\mathcal{F}})\cap E_{i}=\emptyset\quad\forall i$. Hence $\mathcal{F}$ is the pull-back of its direct image on $\mathbb{P}^{2}$.

\end{proof}

\begin{lem}
$W_{1}\cong\pi^{\ast}(\pi_{\ast}W_{1})$.
\end{lem}
\begin{proof}

Twisting the monad by $\mathcal{O}(E_{i})$ and restricting to the exceptional divisor $E_{i}$ one has, from the middle column of the display:
$$\xymatrix@C-1.3pc{0\ar[r]&\tor^{1}(\mathcal{E}(-1), \mathcal{O}|_{E_{i}})\ar[r]&\tor^{1}(X_{1}(-1), \mathcal{O}|_{E_{i}})\ar[r]&\oplus_{j\neq i}A_{j}(-1)\oplus A_{i}(-2)}$$ $$\xymatrix@C-1.2pc{\ar[r]&W_{1}|_{E_{i}}(-1)\ar[r]&Q_{1}|_{E_{i}}(-1)\ar[r]&0}$$ which can be split as

$$\xymatrix@R-1.6pc@C-1.4pc{0\ar[r]&\tor^{1}(\mathcal{E}(-1), \mathcal{O}|_{E_{i}})\ar[r] & \tor^{1}(X_{1}(-1), \mathcal{O}|_{E_{i}})\ar[r]&\mathcal{F}\ar[r]&0 \\
0\ar[r]&\mathcal{F}\ar[r]&\oplus_{j\neq i}A_{j}(-1)\oplus A_{i}(-2)\ar[r]&\mathcal{G}\ar[r]&0 \\
0\ar[r]&\mathcal{G}\ar[r]&W_{1}|_{E_{i}}(-1)\ar[r]&Q_{1}|_{E_{i}}(-1)\ar[r]&0
}$$

The $\tor-$sheaves are supported on points lying on the curve $E_{i}$, and so is the sheaf $\mathcal{F}.$ Thus $\h^{1}(E_{i},\mathcal{F})=0.$
On the other hand $\h^{0}(E_{i},\oplus_{j\neq i}A_{j}(-1)\oplus A_{i}(-2))=0$ implying that $\h^{0}(E_{i},\mathcal{F})=0,$ then $\mathcal{F}$ is the zero sheaf. This means that we have an isomorphism of sheaves $\tor^{1}(\mathcal{E}(-1) \mathcal{O}|_{E_{i}})\cong\tor^{1}(X_{1}(-1), \mathcal{O}|_{E_{i}}).$ Moreover the sequence $$\xymatrix@C-1.4pc{0\ar[r]&\oplus_{j\neq i}A_{j}(-1)\oplus A_{i}(-2)\ar[r]&W_{1}|_{E_{i}}(-1)\ar[r]&Q_{1}|_{E_{i}}(-1)\ar[r]&0}$$ is exact. Its long exact sequence in cohomology gives:
\begin{align}\label{trivW}
0&\longrightarrow\underbrace{\h^{0}(E_{i},\oplus_{j\neq i}A_{j}(-1)\oplus A_{i}(-2))}_{0}\longrightarrow \h^{0}(E_{i},W_{1}|_{E_{i}}(-1))\longrightarrow \h^{0}(E_{i},Q_{1}|_{E_{i}}(-1)) \notag \\
& \\
&\longrightarrow\underbrace{\h^{1}(E_{i},\oplus_{j\neq i}A_{j}(-1)\oplus A_{i}(-2))}_{A_{i}}\longrightarrow \h^{1}(E_{i},W_{1}|_{E_{i}}(-1))\longrightarrow \h^{1}(E_{i},Q_{1}|_{E_{i}}(-1))\longrightarrow0 \notag
\end{align}
but from the last row of the display, i.e. $\xymatrix@C-1.3pc{0 \ar[r] & \mathcal{E}|_{E_{i}}(-1) \ar[r]&Q_{1}|_{E_{i}}(-1)\ar[r] & \oplus_{j\neq i}B_{j}(-1)\oplus B_{i} \ar[r] & 0}$, one has
$$\xymatrix{0\ar[r]& A_{i}\ar[r]& \h^{0}(E_{i},Q_{1}|_{E_{i}}(-1))\ar[r]& B_{i}\ar[r]^\sim & B_{i}\ar[r]& \h^{1}(E_{i},Q_{1}|_{E_{i}}(-1))\ar[r]&0}$$  which means that $$\h^{0}(E_{i},Q_{1}|_{E_{i}}(-1))\cong A_{i},\quad\quad \h^{1}(E_{i},Q_{1}|_{E_{i}}(-1))=0$$ and $$\quad\quad\quad \h^{0}(E_{i},W_{1}|_{E_{i}}(-1))=0, \quad\quad \h^{1}(E_{i},W_{1}|_{E_{i}}(-1))=0 \quad \forall i=1,n.$$

Thus the lemma follows from proposition \textbf{\ref{push&pull}}. We used the fact that the complex $M_{1}$ was constructed so that all the induced maps $B_{i}=\h^{0}(E_{i},\mathcal{B}_{1}\otimes\mathcal{O}|_{E_{i}}(-1))\longrightarrow B_{i}=\h^{0}(E_{i},\mathcal{E}\otimes\mathcal{O}|_{E_{i}}(-1))$ are isomorphisms for each $i.$
\end{proof}

\bigskip

The next step will be the construction of a monad on $\mathbb{P}^{2}$ which describes the sheaf $\pi_{\ast}W_{1}$. For this we need the following:
\begin{thm}\label{thm1}
A torsion-free sheaf $\mathcal{F}$ on $\mathbb{P}^{2}$ is given by the cohomology of a monad with trivial middle term if
$$\h^{0}(\mathbb{P}^{2},\mathcal{F}(-1))=0, \qquad \textrm{and}\qquad \h^{0}(\mathbb{P}^{2},\mathcal{F}^{\ast}(-1))=0$$
\end{thm}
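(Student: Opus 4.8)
The plan is to adapt the classical Beilinson-type argument on $\mathbb{P}^2$ to the torsion-free setting. Recall Beilinson's spectral sequence: for any coherent sheaf $\mathcal{F}$ on $\mathbb{P}^2$ there is a spectral sequence with $E_1$-term $E_1^{p,q}=H^q(\mathbb{P}^2,\mathcal{F}(p))\otimes\Omega^{-p}(-p)$ for $p=-2,-1,0$, converging to $\mathcal{F}$ in degree $0$ and to $0$ otherwise. The strategy is to show that the vanishing hypotheses force enough of the $E_1$-terms to vanish so that the surviving complex is a monad $0\to\mathcal{O}(-1)^{\oplus a}\to\mathcal{O}^{\oplus b}\to\mathcal{O}(1)^{\oplus c}\to 0$ whose cohomology is $\mathcal{F}$. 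The key numerical inputs will be: $H^0(\mathbb{P}^2,\mathcal{F}(-1))=0$ directly; $H^0(\mathbb{P}^2,\mathcal{F}(-2))=0$, which follows from $H^0(\mathcal{F}(-1))=0$ by restricting to a generic line (standard, since $\mathcal{F}$ torsion-free means a generic line misses the singularities of $\mathcal{F}$, so $\mathcal{F}|_\ell$ is locally free on $\ell\cong\mathbb{P}^1$); and the dual vanishings $H^2(\mathbb{P}^2,\mathcal{F}(p))=0$ for $p=-1,0$, which by Serre duality are $H^0(\mathbb{P}^2,\mathcal{F}^\ast(p'))$ — here I must be careful, since for non-locally-free $\mathcal{F}$, Serre duality reads $H^2(\mathcal{F}(p))\cong\mathrm{Ext}^0(\mathcal{F}(p),\omega)^\ast=\mathrm{Hom}(\mathcal{F},\mathcal{O}(-3-p))^\ast$, and $\mathrm{Hom}(\mathcal{F},\mathcal{O}(-3-p))\hookrightarrow\mathrm{Hom}(\mathcal{F}^{\ast\ast},\mathcal{O}(-3-p))=H^0(\mathcal{F}^\ast(-3-p))$. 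So I would first record the clean statement: under the two hypotheses, $H^0(\mathcal{F}(p))=0$ for $p<0$ and $H^2(\mathcal{F}(p))=0$ for $p=-1,-2$ (this is the $\mathbb{P}^2$-analogue of Corollary~\ref{cor}, proved the same way).

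Given these vanishings, I would analyze the Beilinson $E_1$-page. The $q=0$ row has $E_1^{-2,0}=H^0(\mathcal{F}(-2))\otimes\Omega^2(2)=0$ and $E_1^{-1,0}=H^0(\mathcal{F}(-1))\otimes\Omega^1(1)=0$, leaving only $E_1^{0,0}=H^0(\mathcal{F})\otimes\mathcal{O}$. The $q=2$ row has $E_1^{-1,2}=H^2(\mathcal{F}(-1))\otimes\Omega^1(1)=0$ and $E_1^{-2,2}=H^2(\mathcal{F}(-2))\otimes\Omega^2(2)=0$, leaving only $E_1^{0,2}=H^2(\mathcal{F})\otimes\mathcal{O}$. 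So the only possibly-nonzero terms are the full $q=1$ row $E_1^{-2,1}, E_1^{-1,1}, E_1^{0,1}$ together with $E_1^{0,0}=H^0(\mathcal{F})\otimes\mathcal{O}$ and $E_1^{0,2}=H^2(\mathcal{F})\otimes\mathcal{O}$. Since the spectral sequence converges to $\mathcal{F}$ concentrated in total degree $0$, the terms $E_1^{0,0}$ and $E_1^{0,2}$, which sit in total degrees $0$ and $2$, must be killed: the differential $d_1\colon E_1^{-1,1}\to E_1^{0,1}$ and the interplay with higher differentials must make $H^2(\mathcal{F})\otimes\mathcal{O}$ disappear, and similarly the $q=1$ row must reduce appropriately. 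Rather than chase this abstractly, the cleanest route is: the $q=1$ row is a complex $H^1(\mathcal{F}(-2))\otimes\Omega^2(2)\to H^1(\mathcal{F}(-1))\otimes\Omega^1(1)\to H^1(\mathcal{F})\otimes\mathcal{O}$, and after using the Euler sequence to replace $\Omega^1(1)$ and $\Omega^2(2)\cong\mathcal{O}(-1)$ by their standard resolutions by direct sums of $\mathcal{O}(-1),\mathcal{O}$, one repackages the whole thing into a complex of sums of line bundles $\mathcal{O}(-1),\mathcal{O},\mathcal{O}(1)$. I would show the outer cohomologies of this repackaged complex vanish (using that $E_2=E_\infty$ in the surviving positions, forced by convergence and the vanishing already established), so it is a monad, and its middle cohomology is $\mathcal{F}$.

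The main obstacle, and the step I would spend the most care on, is the bookkeeping that converts the Beilinson resolution — which a priori involves $\Omega^1(1)$ in the middle — into an honest monad with \emph{trivial} middle term $\mathcal{O}^{\oplus b}$, while simultaneously checking the $\mathcal{O}(1)$ term really appears on the right and nothing extra contaminates degrees $-2$ or $2$. Concretely the danger is that $H^2(\mathcal{F})\otimes\mathcal{O}$ and the $H^1(\mathcal{F}(-2))\otimes\mathcal{O}(-1)$ contributions (coming from $\Omega^2(2)\cong\mathcal{O}(-1)$) could survive in the wrong spot; ruling this out requires showing the relevant $d_1$ or $d_2$ differentials are isomorphisms, which in turn uses exactly the Serre-dual vanishing $H^2(\mathcal{F}(-1))=H^2(\mathcal{F}(-2))=0$ and a dimension count via Riemann–Roch. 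Once the complex $0\to\mathcal{O}(-1)^{\oplus a}\xrightarrow{\alpha}\mathcal{O}^{\oplus b}\xrightarrow{\beta}\mathcal{O}(1)^{\oplus c}\to 0$ is in hand with $\beta\alpha=0$, injectivity of $\alpha$ (as a sheaf map) is automatic since $H^0(\mathcal{F}(-1))=0$ kills the kernel, surjectivity of $\beta$ follows from $H^2(\mathcal{F})=0$ dually, and $\ker\beta/\mathrm{im}\,\alpha\cong\mathcal{F}$ is the $E_\infty=\mathcal{F}$ statement — so the theorem follows.
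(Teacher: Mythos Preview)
Your overall strategy (Beilinson spectral sequence plus the vanishings $H^0(\mathcal{F}(p))=0$ for $p<0$ and $H^2(\mathcal{F}(p))=0$ for $p=-1,-2$) is the right one, but two choices make your route much harder than the paper's and leave a real gap. First, you use the version of Beilinson with $E_1^{p,q}=H^q(\mathcal{F}(p))\otimes\Omega^{-p}(-p)$; this forces the middle term of the $q=1$ row to be $H^1(\mathcal{F}(-1))\otimes\Omega^1(1)$, which is \emph{not} trivial, and your proposed ``repackaging via the Euler sequence'' is not a well-defined operation on a three-term complex --- replacing one term by a resolution does not yield another three-term complex without further argument (mapping cones, mutations, etc.), and you do not supply one. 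Second, you apply the spectral sequence to $\mathcal{F}$ rather than to $\mathcal{F}(-1)$, so $E_1^{0,0}=H^0(\mathcal{F})\otimes\mathcal{O}$ survives and must be killed by a $d_2$ you never identify; the clean vanishing you want on the $q=0$ and $q=2$ rows simply does not hold for this twist.

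The paper sidesteps both issues at once. It uses the \emph{other} Beilinson spectral sequence, $E_1^{p,q}=H^q(\mathcal{F}\otimes\Omega^{-p}(-p))\otimes\mathcal{O}(-p)$, whose terms are already line bundles, and applies it to $\mathcal{F}(-1)$. With this shift the entire $q=0$ row involves $H^0(\mathcal{F}(-1))$, $H^0(\mathcal{F}\otimes\Omega^1)$, $H^0(\mathcal{F}(-2))$, all of which vanish by your own preliminary lemma (the middle one via the Euler sequence $\Omega^1(1)\hookrightarrow\mathcal{O}^3$); dually the $q=2$ row vanishes using $H^0(\mathcal{F}^\ast(-1))=0$ and the isomorphism $\Omega^1\cong T(-3)$. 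Only the $q=1$ row survives, so $E_2=E_\infty$ and convergence immediately gives a monad
\[
0\longrightarrow H^1(\mathcal{F}(-1))\otimes\mathcal{O}(-1)\longrightarrow H^1(\mathcal{F}\otimes\Omega^1)\otimes\mathcal{O}\longrightarrow H^1(\mathcal{F}(-2))\otimes\mathcal{O}(1)\longrightarrow 0
\]
with trivial middle term and cohomology $\mathcal{F}$ --- no repackaging, no stray $d_2$, no appeal to $H^2(\mathcal{F})=0$.
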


\begin{proof}
Beilinson's theorem (\cite{Okonek} \textbf{3.1.3} and \textbf{3.1.4}) extends to the case of a torsion-free sheaf (\cite{Naka} \textbf{2.1}, \cite{Ancona1} and \cite{Ancona} for applications), hence there exists a spectral sequence $E_{r}^{p,q}$ with first term: $E_{1}^{p,q}=\h^{q}(\mathbb{P}^{2},\mathcal{F}\otimes \Omega^{-p}(-p))\otimes\mathcal{O}(p)$
which converges to :
$$E_{\infty}^{p,q}=\left\{\begin{array}{ll}\mathcal{F} & \textrm{for }p+q=0\\0& \textrm{otherwise} \end{array}\right.$$
We apply this to the sheaf $\mathcal{F}(-1)$ and use the vanishing conditions. This leads to a monad, with cohomology $\mathcal{F}(-1)$, given by
$$\xymatrix{0\ar[r]&E_{1}^{-2,1}\ar[r]^{d_{1}^{-2,1}}&E_{1}^{-1,1}\ar[r]^{d_{1}^{-1,1}}&E_{1}^{0,1}\ar[r]&0}$$
Twisting the complex by $\mathcal{O}(-1)$ one has the monad:
$$\xymatrix@C-0.3pc@R-1.4pc{0\ar[r]&\h^{1}(\mathbb{P}^{2},\mathcal{F}(-2))\otimes\mathcal{O}(-1)\ar[r]^{\quad d_{1}^{-2,1}}&\h^{1}(\mathbb{P}^{2},\mathcal{F}\otimes\Omega^{1})\otimes\mathcal{O} &&\\
&\qquad\quad\ar[r]^{d_{1}^{-1,1}\quad}&\h^{1}(\mathbb{P}^{2},\mathcal{F}(-1))\otimes\mathcal{O}(1)\ar[r]&0&&}$$ with cohomology the sheaf $\mathcal{F}.$

\end{proof}

\begin{lem}\label{dualdirectimage}
$\h^{0}(\mathbb{P}^{2},\pi_{\ast}(W_{1}^{\ast})(-1))=\h^{0}(\mathbb{P}^{2},(\pi_{\ast}W_{1})^{\ast}(-1))$
\end{lem}
\begin{proof}
First one has $\h^{0}(\mathbb{P}^{2},\pi_{\ast}(W_{1}^{\ast})(-1))= \h^{0}(\tilde{\mathbb{P}},W_{1}^{\ast}(-1,0))$ which by Serre duality is equal to $\h^{2}(\tilde{\mathbb{P}},W_{1}^{\ast\ast}(-2,\vec{1}))^{\ast}.$ From the natural injection of a torsion-free sheaf in its double dual, one has
\begin{align}
\h^{2}(\tilde{\mathbb{P}},W_{1}^{\ast\ast}(-2,\vec{1}))^{\ast}&=\h^{2}(\tilde{\mathbb{P}},W_{1}(-2,\vec{1}))^{\ast}\notag \\
&=\ext^{2}_{\mathcal{O}_{\tilde{\mathbb{P}}}}(\mathcal{O}(-2,\vec{1}),W_{1})^{\ast} \notag \\
&=\Hom_{\mathcal{O}_{\tilde{\mathbb{P}}}}(W_{1},\mathcal{O}(-1,0)). \notag
\end{align}
Moreover for any sheaf of $\mathcal{O}_{\tilde{\mathbb{P}}}$-modules $\mathcal{F}$ and for any sheaf of $\mathcal{O}_{\mathbb{P}^{2}}$-modules $\mathcal{G}$ one has the formula (\cite{Hart},II. 5 page 110): $\Hom_{\tilde{\mathbb{P}}}(\pi^{\ast}\mathcal{G},\mathcal{F})=\Hom_{\mathbb{P}^{2}}(\mathcal{G},\pi_{\ast}\mathcal{F})$ since $\pi_{\ast}$ and $\pi^{\ast}$ are adjoint functors. Then using the fact that $W_{1}$ is the pull-back of its direct image on $\mathbb{P}^{2}$, and the fact that $\pi_{\ast}\mathcal{O}(-1,0)\cong\mathcal{O}(-1)$ we have the canonical isomorphisms
\begin{align}
\Hom_{\mathcal{O}_{\tilde{\mathbb{P}}}}(\pi^{\ast}(\pi_{\ast}W_{1}),\mathcal{O}(-1,0))&= \Hom_{\mathcal{O}_{\mathbb{P}^{2}}}(\pi_{\ast}W_{1},\mathcal{O}(-1))\notag \\
&=\ext^{2}_{\mathcal{O}_{\mathbb{P}^{2}}}(\mathcal{O}(-1),\pi_{\ast}W_{1}(-3))^{\ast}\notag \\
&=\h^{2}(\mathbb{P}^{2},\pi_{\ast}W_{1}(-2))^{\ast}. \notag
\end{align}
Again, by the natural injection of a torsion-free sheaf in its double dual, one has
\par\noindent
$\h^{2}(\mathbb{P}^{2},\pi_{\ast}W_{1}(-2))^{\ast}= \h^{2}(\mathbb{P}^{2},(\pi_{\ast}W_{1})^{\ast\ast}(-2))^{\ast}= \h^{0}(\mathbb{P}^{2},(\pi_{\ast}W_{1})^{\ast}(-1))$ from which the claim follows.
\end{proof}

\begin{pr}
The direct image $\pi_{\ast}W_{1}$ of the sheaf $W_{1}$ is given by the cohomology of a monad on $\mathbb{P}^{2}$ with trivial middle term.
\end{pr}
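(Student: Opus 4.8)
The plan is to apply Theorem \ref{thm1} to the torsion-free sheaf $\mathcal{F}=\pi_{\ast}W_{1}$ on $\mathbb{P}^{2}$, so the whole task reduces to verifying the two vanishing conditions
$$H^{0}(\mathbb{P}^{2},(\pi_{\ast}W_{1})(-1))=0,\qquad H^{0}(\mathbb{P}^{2},(\pi_{\ast}W_{1})^{\ast}(-1))=0.$$
First I would translate these back to $\tilde{\mathbb{P}}$. Since we established in the lemma that $W_{1}$ is trivial on every exceptional divisor and hence $W_{1}=\pi^{\ast}(\pi_{\ast}W_{1})$ with $\mathcal{R}^{1}\pi_{\ast}W_{1}=0$, the projection formula together with $\pi_{\ast}\mathcal{O}_{\tilde{\mathbb{P}}}=\mathcal{O}_{\mathbb{P}^{2}}$ gives $H^{0}(\mathbb{P}^{2},(\pi_{\ast}W_{1})(-1))=H^{0}(\mathbb{P}^{2},\pi_{\ast}(W_{1}\otimes\pi^{\ast}\mathcal{O}(-1)))=H^{0}(\tilde{\mathbb{P}},W_{1}(-1,\overrightarrow{0}))$. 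Likewise, because $W_{1}$ is locally free along the $E_{i}$ and is a pullback, one checks that $(\pi_{\ast}W_{1})^{\ast}\cong\pi_{\ast}(W_{1}^{\ast})$ (the relevant $\mathcal{E}xt$'s vanish, $W_{1}$ being torsion-free with $\Delta_{W}$ supported away from $l_\infty$ and the blow-down being an isomorphism there), so the second condition becomes $H^{0}(\tilde{\mathbb{P}},W_{1}^{\ast}(-1,\overrightarrow{0}))=0$.

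Next I would prove $H^{0}(\tilde{\mathbb{P}},W_{1}(-1,\overrightarrow{0}))=0$ using the two short exact sequences defining the display \eqref{display}. Twisting $0\to\mathcal{A}_{1}\to W_{1}\to Q_{1}\to0$ by $\mathcal{O}(-1,\overrightarrow{0})$ gives
$$0\longrightarrow H^{0}(\tilde{\mathbb{P}},\mathcal{A}_{1}(-1,\overrightarrow{0}))\longrightarrow H^{0}(\tilde{\mathbb{P}},W_{1}(-1,\overrightarrow{0}))\longrightarrow H^{0}(\tilde{\mathbb{P}},Q_{1}(-1,\overrightarrow{0})).$$
For the left term, $\mathcal{A}_{1}=\oplus_{i}A_{i}(-1,E_{i})$, so $\mathcal{A}_{1}(-1,\overrightarrow{0})=\oplus_{i}A_{i}\otimes\mathcal{O}(-2,E_{i})$; since $\mathcal{O}(-2,E_{i})$ has strictly negative degree on $l_{\infty}$ and negative Euler characteristic with no sections (its restriction to $l_\infty$ is $\mathcal{O}(-2)$ and it has no sections on $\tilde{\mathbb{P}}$ by the Riemann–Roch/vanishing bookkeeping for line bundles of the form $\mathcal{O}(p,\overrightarrow{q})$ with $p<0$), this vanishes. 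For the right term, recall from the display that $Q_{1}$ sits in $0\to\mathcal{E}\to Q_{1}\to\mathcal{B}_{1}\to0$; twisting by $\mathcal{O}(-1,\overrightarrow{0})$, the sequence shows $H^{0}(\tilde{\mathbb{P}},Q_{1}(-1,\overrightarrow{0}))$ is squeezed between $H^{0}(\tilde{\mathbb{P}},\mathcal{E}(-1,\overrightarrow{0}))$, which is $0$ by Corollary \ref{cor} (case $p=-1<0$), and $H^{0}(\tilde{\mathbb{P}},\mathcal{B}_{1}(-1,\overrightarrow{0}))$ with $\mathcal{B}_{1}=\oplus_{i}B_{i}(1,-E_{i})$, so $\mathcal{B}_{1}(-1,\overrightarrow{0})=\oplus_{i}B_{i}\otimes\mathcal{O}(0,-E_{i})$; but $\mathcal{O}(0,-E_{i})\hookrightarrow\mathcal{O}$ has no global sections either (sections would be functions vanishing on $E_{i}$, impossible on the connected surface), so this also vanishes and hence $H^{0}(\tilde{\mathbb{P}},W_{1}(-1,\overrightarrow{0}))=0$.

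The dual condition $H^{0}(\tilde{\mathbb{P}},W_{1}^{\ast}(-1,\overrightarrow{0}))=0$ is, by Serre duality, equivalent to $H^{2}(\tilde{\mathbb{P}},W_{1}(-2,\overrightarrow{1}))=0$ (using $\omega_{\tilde{\mathbb{P}}}=\mathcal{O}(-3,\overrightarrow{1})$), which I would again attack through the display. From $0\to X_{1}\to W_{1}\to\mathcal{B}_{1}\to0$ twisted by $\mathcal{O}(-2,\overrightarrow{1})$ one gets $H^{2}(\tilde{\mathbb{P}},X_{1}(-2,\overrightarrow{1}))\to H^{2}(\tilde{\mathbb{P}},W_{1}(-2,\overrightarrow{1}))\to H^{2}(\tilde{\mathbb{P}},\mathcal{B}_{1}(-2,\overrightarrow{1}))$; the last group is a sum of $H^{2}$ of line bundles $\mathcal{O}(-1,\overrightarrow{1}-E_{i})$, dual to $H^{0}$ of $\mathcal{O}(-2,E_{i})$, which vanishes; and $H^{2}(\tilde{\mathbb{P}},X_{1}(-2,\overrightarrow{1}))$ is handled via $0\to\mathcal{A}_{1}\to X_{1}\to\mathcal{E}\to0$: the $\mathcal{E}$-contribution is $H^{2}(\tilde{\mathbb{P}},\mathcal{E}(-2,\overrightarrow{1}))=0$ by Corollary \ref{cor} (case $p=-2$), and the $\mathcal{A}_{1}$-contribution is a sum of $H^{2}$ of $\mathcal{O}(-3,\overrightarrow{1}+E_{i})$, Serre-dual to $H^{0}$ of $\mathcal{O}(0,-E_{i})$, which vanishes as above. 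Hence $H^{2}(\tilde{\mathbb{P}},W_{1}(-2,\overrightarrow{1}))=0$, so $H^{0}(\mathbb{P}^{2},(\pi_{\ast}W_{1})^{\ast}(-1))=0$, and Theorem \ref{thm1} yields the desired monad on $\mathbb{P}^{2}$ with trivial middle term and cohomology $\pi_{\ast}W_{1}$. The main obstacle I anticipate is bookkeeping: correctly identifying $(\pi_{\ast}W_{1})^{\ast}$ with $\pi_{\ast}(W_{1}^{\ast})$ (rather than with $(\pi_{\ast}(W_1^{\vee\vee}))^{\vee}$ or similar) and keeping the twists $\mathcal{O}(p,\overrightarrow{q})$ straight through the projection formula; once the translation to $\tilde{\mathbb{P}}$ is secured, every vanishing is a direct consequence of Corollary \ref{cor} and the elementary vanishing $H^{0}(\mathcal{O}(p,\overrightarrow{q}))=0$ for $p<0$ applied to the two short exact sequences of the display.
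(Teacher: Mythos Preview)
Your argument is correct and follows the same scheme as the paper: reduce to Theorem~\ref{thm1} and verify the two $H^{0}$-vanishings by chasing the display of $M_{1}$ together with Corollary~\ref{cor} and the elementary vanishing $H^{0}(\mathcal{O}(0,-E_{i}))=0$. The only real difference is cosmetic: for the dual condition the paper dualizes the display (working with $0\to\mathcal{B}_{1}^{\ast}\to W_{1}^{\ast}\to X_{1}^{\ast}\to0$ and then $0\to\mathcal{E}^{\ast}\to X_{1}^{\ast}\to\mathcal{F}\to0$, which forces a little care with the $\mathcal{E}xt$ tails), whereas you pass by Serre duality to $H^{2}(W_{1}(-2,\overrightarrow{1}))$ and stay with the original display; and within the display you use the pair $(\mathcal{A}_{1}\to W_{1}\to Q_{1},\ \mathcal{E}\to Q_{1}\to\mathcal{B}_{1})$ rather than the symmetric pair $(X_{1}\to W_{1}\to\mathcal{B}_{1},\ \mathcal{A}_{1}\to X_{1}\to\mathcal{E})$. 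Both choices lead to the same line-bundle vanishings.

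One small point to tighten: your Serre duality step literally gives $H^{0}(W_{1}^{\ast}(-1,\overrightarrow{0}))^{\ast}\cong H^{2}(W_{1}^{\ast\ast}(-2,\overrightarrow{1}))$, not $H^{2}(W_{1}(-2,\overrightarrow{1}))$, since $W_{1}$ is a priori only torsion-free. This is harmless for the direction you need, because $0\to W_{1}\to W_{1}^{\ast\ast}\to\Delta_{W}\to0$ with $\Delta_{W}$ zero-dimensional makes $H^{2}(W_{1}(-2,\overrightarrow{1}))\twoheadrightarrow H^{2}(W_{1}^{\ast\ast}(-2,\overrightarrow{1}))$; just say so rather than ``equivalent.''
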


\begin{proof}
It suffices to verify the vanishing given in theorem \textbf{\ref{thm1}}.

\bigskip

\underline{$\h^{0}(\mathbb{P}^{2},(\pi_{\ast}W_{1})^{\ast}(-1))=0$:}

\bigskip

From lemma \textbf{\ref{dualdirectimage}} we have $\h^{0}(\mathbb{P}^{2},(\pi_{\ast}W_{1})^{\ast}(-1))=\h^{0}(\mathbb{P}^{2},\pi_{\ast}(W_{1}^{\ast})(-1)).$ On the other hand we have $\h^{0}(\mathbb{P}^{2},\pi_{\ast}(W_{1}^{\ast})(-1))=\h^{0}(\tilde{\mathbb{P}},W_{1}^{\ast}(-1,0))$ since $W_{1}^{\ast}(-1,0)$ is the pullback of $\pi_{\ast}(W_{1}^{\ast})(-1)$ under the blow-down map. Then it suffices to show that $\h^{0}(\tilde{\mathbb{P}},W_{1}^{\ast}(-1,0))=0$ in order to prove the first vanishing: dualizing the first row of the display \eqref{display} and twisting the resulting sequence by $\mathcal{O}(-1,0),$ one has the following sequence in cohomology
$$0\longrightarrow\oplus_{i=1}^{n}B^{\ast}_{i}\otimes\underbrace{\h^{0}(\tilde{\mathbb{P}},\mathcal{O}(-2,-E_{i}))}_{0}\longrightarrow \h^{0}(\tilde{\mathbb{P}},W_{1}^{\ast}(-1,0))\longrightarrow \h^{0}(\tilde{\mathbb{P}},X_{1}^{\ast}(-1,0))$$
Taking the dual sequence of the left column of the display \eqref{display} and twisting by $\mathcal{O}(-1,0),$ one has the following induced exact sequence in cohomology $$0\longrightarrow\underbrace{\h^{0}(\tilde{\mathbb{P}},\mathcal{E}^{\ast}(-1,0))}_{0}\longrightarrow \h^{0}(\tilde{\mathbb{P}},X^{\ast}(-1,0))\longrightarrow \h^{0}(\tilde{\mathbb{P}},\mathcal{A}^{\ast}_{1}(-1,0))$$ but $\h^{0}(\tilde{\mathbb{P}},\mathcal{A}_{1}^{\ast}(-1,0))\cong\oplus_{i=1}^{n}A_{i}^{\ast}\otimes \h^{0}(\tilde{\mathbb{P}},\mathcal{O}(0,-E_{i}))$ where the group $\h^{0}(\tilde{\mathbb{P}},\mathcal{O}(0,-E_{i}))$ is zero for any $i$. Thus $\h^{0}(\tilde{\mathbb{P}},X_{1}^{\ast}(-1,0))=0$ which implies that $W^{\ast}_{1}(-1)$ has no global sections.

\bigskip

\underline{$\h^{0}(\mathbb{P}^{2},\pi_{\ast}W_{1}(-1))=0$:} $\quad$ Again it suffices to show that $\h^{0}(\tilde{\mathbb{P}},W_{1}(-1,0))=0.$ If one twists the display \eqref{display} by $\mathcal{O}(-1,0),$ then the vanishing follows by the same argument as above. Hence the torsion-free sheaf $\pi_{\ast}W_{1}$ is described as the cohomology of a monad on $\mathbb{P}^{2}$ with trivial middle term.

\end{proof}
The monad which has cohomology the sheaf $\pi_{\ast}W_{1}$ is given by
\begin{equation*}
    M'_{0}:\quad 0\longrightarrow K_{0}(-1)\longrightarrow W\longrightarrow L_{0}(1)\longrightarrow0
\end{equation*}
where $K_{0}=\h^{1}(\tilde{\mathbb{P}}, W_{1}(-2,\overrightarrow{1}))$, $L_{0}=\h^{1}(\tilde{\mathbb{P}}, W_{1}(-1,0))$ and $W$ is a trivial bundle. To see that the spaces $K_{0}$ and $L_{0}$ yield the Chern character $ch(W_{1})$ we shall compute their dimensions:

\bigskip

\begin{pr}
    $$\h^{0}(\tilde{\mathbb{P}}, W_{1}(-2,\overrightarrow{1}))=\h^{0}(\tilde{\mathbb{P}}, W_{1}(-1,0))=0$$
 and
    $$\h^{2}(\tilde{\mathbb{P}}, W_{1}(-2,\overrightarrow{1}))=\h^{2}(\tilde{\mathbb{P}}, W_{1}(-1,0))=0$$
\end{pr}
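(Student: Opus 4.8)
The plan is to establish these vanishings by exploiting the two short exact sequences coming from the display that defines $W_1$, namely
$$0\longrightarrow\mathcal{A}_1\longrightarrow W_1\longrightarrow Q_1\longrightarrow 0\qquad\text{and}\qquad 0\longrightarrow X_1\longrightarrow W_1\longrightarrow\mathcal{B}_1\longrightarrow 0,$$
together with the vanishing results already proved for $\mathcal{E}$, for $\mathcal{E}^{\ast\ast}$, for $\mathcal{E}^{\ast}$ (Proposition and Corollary~\ref{cor}), and the descriptions of $\mathcal{A}_1$, $\mathcal{B}_1$, $X_1$, $Q_1$ obtained above. I would treat $H^0$ and $H^2$ separately, and within each treat the twist $(-2,\overrightarrow{1})$ and the twist $(-1,0)$ in parallel, since the mechanism is identical.

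For the $H^0$ statements I would use the second exact sequence twisted by the relevant line bundle and take cohomology; this gives
$$0\longrightarrow H^0(X_1(p,\overrightarrow q))\longrightarrow H^0(W_1(p,\overrightarrow q))\longrightarrow H^0(\mathcal{B}_1(p,\overrightarrow q)).$$
The term $H^0(\mathcal{B}_1(p,\overrightarrow q))=\oplus_i B_i^{\ast}\otimes H^0(\mathcal{O}(p+1,\overrightarrow q-E_i))$ vanishes in both cases because $p+1<0$ (for $p=-2$ this is immediate, for $p=-1$ the $l_\infty$-degree is $0$ but the $E_i$-component forces vanishing: $\mathcal{O}(0,-E_i)$ has no sections). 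For $H^0(X_1(p,\overrightarrow q))$, I would use the sequence $0\to\mathcal{A}_1\to X_1\to\mathcal{E}\to0$ twisted: $H^0(\mathcal{A}_1(p,\overrightarrow q))=\oplus_i A_i\otimes H^0(\mathcal{O}(p-1,\overrightarrow q+E_i))=0$ since $p-1<0$, and $H^0(\mathcal{E}(p,\overrightarrow q))=0$ by Corollary~\ref{cor} since $p<0$. This forces $H^0(W_1(p,\overrightarrow q))=0$ for both twists.

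For the $H^2$ statements I would run the same two-step argument but on cohomology in degree $2$, using the first sequence $0\to\mathcal{A}_1\to W_1\to Q_1\to0$. One needs $H^2(\mathcal{A}_1(p,\overrightarrow q))=\oplus_i A_i\otimes H^2(\mathcal{O}(p-1,\overrightarrow q+E_i))$ and $H^2(Q_1(p,\overrightarrow q))$ to vanish. The first: by Serre duality $H^2(\mathcal{O}(p-1,\overrightarrow q+E_i))=H^0(\mathcal{O}(-2-p,\overrightarrow 1-\overrightarrow q-E_i))^{\ast}$, which vanishes since for $p=-1,-2$ we have $-2-p=-1$ or $0$, and when $-2-p=0$ the exceptional component $\mathcal{O}(0,-E_i)$ again kills it. For $H^2(Q_1(p,\overrightarrow q))$ I would use $0\to\mathcal{E}\to Q_1\to\mathcal{B}_1\to0$ twisted: $H^2(\mathcal{B}_1(p,\overrightarrow q))=\oplus_i B_i^\ast\otimes H^2(\mathcal{O}(p+1,\overrightarrow q-E_i))$, which by Serre duality is $\oplus_i B_i^\ast\otimes H^0(\mathcal{O}(-3-p,\overrightarrow 1-\overrightarrow q+E_i))^\ast$, vanishing for $p=-1,-2$ since $-3-p<0$; and the cokernel term $H^2(\mathcal{E}(p,\overrightarrow q))=0$ by Corollary~\ref{cor} for $p=-1,-2$. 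Tracking the connecting maps, all relevant $H^2$'s vanish. The main obstacle I anticipate is purely bookkeeping: making sure the exceptional-divisor twists $\overrightarrow q\pm E_i$ are handled correctly at the borderline $l_\infty$-degree $0$, where the line bundle on $\mathbb{P}^2$ would have sections but the twist by $-E_i$ or the Serre-dual twist removes them — so the vanishing genuinely uses the structure of $\operatorname{Pic}(\tilde{\mathbb{P}})$ and not just degree counting on $\mathbb{P}^2$.
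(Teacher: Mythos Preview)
Your proposal is correct and follows essentially the same approach as the paper: twist the display \eqref{display} by the relevant line bundle and run the long exact sequences, reducing to the known vanishings for $\mathcal{E}$ (Corollary~\ref{cor}) and elementary vanishings for the line-bundle summands of $\mathcal{A}_1$ and $\mathcal{B}_1$. The paper's proof is a one-line pointer to this computation, whereas you have spelled out the individual steps; the only blemishes are cosmetic (e.g.\ the summands of $\mathcal{B}_1$ carry $B_i$, not $B_i^{\ast}$, and the Serre-dual $l_\infty$-degree for $\mathcal{B}_1$ should read $-4-p$ rather than $-3-p$), neither of which affects the conclusion.
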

\begin{proof}
The proof is given by using the display \eqref{display} twisted by $\mathcal{O}(-1,0)$ and taking the induced long exact sequences in cohomology.
\end{proof}
\begin{cor}
The spaces $K_{0}$ and $L_{0}$ have dimension $k+\frac{|\overrightarrow{a}|^{2}-\bar{a}}{2}$ and $k+\frac{|\overrightarrow{a}|^{2}+\bar{a}}{2},$ respectively.
\end{cor}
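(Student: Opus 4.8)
The claim is purely a dimension count via Riemann–Roch. Since the previous proposition shows $H^0(\tilde{\mathbb{P}}, W_1(-2,\overrightarrow{1})) = H^2(\tilde{\mathbb{P}}, W_1(-2,\overrightarrow{1})) = 0$ and similarly for $W_1(-1,0)$, we have $\dim K_0 = -\chi(W_1(-2,\overrightarrow{1}))$ and $\dim L_0 = -\chi(W_1(-1,0))$, where $K_0 = H^1(\tilde{\mathbb{P}}, W_1(-2,\overrightarrow{1}))$ and $L_0 = H^1(\tilde{\mathbb{P}}, W_1(-1,0))$. So the whole proof is: plug into the Euler characteristic formula.

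**Steps.** First I would recall from the computations in the display that
$$ch(W_1) = (r + D + D') + \bar{a}\, l_\infty - \left(k + \tfrac{|\overrightarrow{a}|^2}{2}\right)\omega,$$
and note that $\omega = \mathcal{O}(-3,\overrightarrow{1})$, so the $\omega$-coefficient contributes to both the rank-zero ($c_2$-type) part and, after expanding, must be translated into the $(p,\overrightarrow{q})$ language of the Riemann–Roch formula stated earlier for sheaves $\mathcal{E}$ of that Chern character type. Concretely, $W_1$ is a sheaf of the same shape as $\mathcal{E}$ but with rank $r+D+D'$, with $a_i$-part equal to $0$ on each $E_i$ (the $E_i$-coefficient of $ch(W_1)$ vanishes), with $l_\infty$-coefficient $\bar{a}$, and with the $\omega$-coefficient $k + \tfrac{|\overrightarrow{a}|^2}{2}$. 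I would then apply the general formula
$$\chi(\mathcal{E}(p,\overrightarrow{q})) = -\Big[k - \tfrac{a}{2}(a+3) + \tfrac12\textstyle\sum a_i(a_i-1)\Big] + \tfrac{r}{2}\big[(p+1)(p+2) - \textstyle\sum q_i(q_i-1)\big] + \big[ap - \textstyle\sum a_i q_i\big]$$
with the substitutions $r \rightsquigarrow r+D+D'$, $a \rightsquigarrow \bar a$, $a_i \rightsquigarrow 0$, $k \rightsquigarrow k + \tfrac{|\overrightarrow a|^2}{2} + \tfrac{\bar a}{2}(\bar a + 3)$ (so that the bracket $[k - \tfrac{a}{2}(a+3)+\dots]$ collapses to $k + \tfrac{|\overrightarrow a|^2}{2}$), once for $(p,\overrightarrow q) = (-2, \overrightarrow 1)$ and once for $(p,\overrightarrow q) = (-1, \overrightarrow 0)$. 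For $(p,\overrightarrow q) = (-1,\overrightarrow 0)$: $(p+1)(p+2) = 0$, $\sum q_i(q_i-1) = 0$, $ap = -\bar a$, $\sum a_i q_i = 0$, giving $\chi = -(k + \tfrac{|\overrightarrow a|^2}{2}) - \bar a = -(k + \tfrac{|\overrightarrow a|^2 + \bar a}{2}) - \tfrac{\bar a}{2}$... so I would instead be careful: the rank term must drop out, which it does precisely because the twist $(-1,\overrightarrow 0)$ kills the rank contribution $\tfrac{r}{2}(p+1)(p+2) = 0$ only partially — here $(p+1)(p+2)=0$ exactly, so the rank really is invisible for this twist. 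A parallel evaluation at $(-2,\overrightarrow 1)$ gives $(p+1)(p+2) = 0$ again and $\sum q_i(q_i-1) = 0$, with $ap = -2\bar a$, hence the two values differ by exactly $\bar a$, landing on $k + \tfrac{|\overrightarrow a|^2 \mp \bar a}{2}$ as claimed.

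**Main obstacle.** There is no conceptual obstacle; the only thing requiring care is bookkeeping — correctly identifying the "$k$-parameter" of $W_1$ so that it matches the normalization in the displayed Riemann–Roch formula (the formula is written for a sheaf whose $\omega$-coefficient is $k + \frac{|\overrightarrow a|^2}{2}$, and one must check that $W_1$'s $\omega$-coefficient is genuinely $k + \frac{|\overrightarrow a|^2}{2}$, which it is from the $ch(W_1)$ computation above, together with the fact that $W_1$ has no $E_i$-components in its first Chern class so the $\sum a_i(a_i-1)$ term is absent). Once the substitution dictionary is set up, both $\chi$ values come out by elementary arithmetic, and the vanishing of $H^0$ and $H^2$ from the preceding proposition converts $-\chi$ directly into $\dim H^1 = \dim K_0$ (resp.\ $\dim L_0$), yielding $k + \frac{|\overrightarrow a|^2 - \bar a}{2}$ and $k + \frac{|\overrightarrow a|^2 + \bar a}{2}$. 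As a sanity check I would verify $\dim K_0 + \dim L_0 = 2k + |\overrightarrow a|^2$ and $\dim L_0 - \dim K_0 = \bar a$, which is consistent with $M'_0$ having cohomology of the expected Chern character $ch(W_1)$ (rank $(r + D + D') + \dim L_0 - \dim K_0$ after accounting for the $\pm 1$ twists, etc.).
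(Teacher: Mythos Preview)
Your approach is exactly the paper's: compute $\chi(W_1(-2,\overrightarrow{1}))$ and $\chi(W_1(-1,0))$ by Riemann--Roch and invoke the vanishing of $H^0$ and $H^2$ from the preceding proposition to identify $-\chi$ with $\dim H^1$. The bookkeeping in your middle paragraph is a bit tangled (your substitution $k \rightsquigarrow k + \tfrac{|\overrightarrow a|^2}{2} + \tfrac{\bar a}{2}(\bar a+3)$ is off; from $ch(W_1)$ one reads $k_{W_1} = k + \tfrac{|\overrightarrow a|^2}{2} + \tfrac{\bar a^2}{2}$, and the bracket then equals $k + \tfrac{|\overrightarrow a|^2}{2} - \tfrac{3\bar a}{2}$), but once that is fixed the two evaluations go through exactly as you indicate, since $(p+1)(p+2)=0$ and $\sum q_i(q_i-1)=0$ for both twists and only the term $\bar a\, p$ distinguishes them.
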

\begin{proof}
Using the Riemann-Roch formula we compute the Euler characteristics of $W_{1}(-2,\overrightarrow{1})$ and $W_{1}(-1,0)$. This gives
\begin{equation}
\chi(W_{1}(-2,\overrightarrow{1}))=-(k+\frac{|\overrightarrow{a}|^{2}+\bar{a}}{2})\quad\textrm{and}\quad \chi(W_{1}(-1,0))=-(k+\frac{|\overrightarrow{a}|^{2}-\bar{a}}{2})
\end{equation}
The corollary follows from the vanishing of the groups in the proposition above.
\end{proof}

\bigskip

Now we want to construct an intermediate monad with trivial middle term and with cohomology the original sheaf $\mathcal{E}.$
First we have to pull-back the monad $M'_{0}$ to a monad $M_{0}$ on $\tilde{\mathbb{P}}$:
\begin{equation*}
    M_{0}:\quad 0\longrightarrow K_{0}(-1,0)\longrightarrow W\longrightarrow L_{0}(1,0)\longrightarrow0
\end{equation*}
Indeed the first map in the monad $M'_{0}$ vanishes on the singularity set $Sing(\pi_{\ast}W_{1})$ of the torsion-free sheaf $\pi_{\ast}W_{1}.$ On the other hand $Sing(W_{1})\cap E_{i}=\emptyset$ for all $i.$ Moreover $W_{1}=\pi^{\ast}\pi_{\ast}W_{1}.$ This implies that $Sing(\pi_{\ast}W_{1})\cap p_{i}=\emptyset$ for all $i$, where $p_{i}\in\mathbb{P}^{2}$ are the blow-up points. Now the first map in $M'_{0}$ has maximal rank at $p_{i}$ for all $i$, as well as the second map. Consequently the locus on which the first map in $M_{0}$ is not of maximal rank is zero dimensional. Thus the pull-back $M_{0}$ of $M'_{0}$ is a monad.

Then we should lift the morphism $\mathcal{A}_{1}\longrightarrow W_{1}$ to a morphism $\mathcal{A}_{1}\longrightarrow X'_{0}$ where $X'_{0}=ker(W\longrightarrow L_{0}(1,0))$ i.e.
\begin{equation*}
    \xymatrix@C-0.5pc@R-0.5pc{&X'_{0}\ar[d] \\
    \mathcal{A}_{1}\ar@{-->}[ru]\ar@{^{(}->}[r]&W_{1}
    }
\end{equation*}
so we want a surjective morphism $\Hom(\mathcal{A}_{1},X'_{0})\longrightarrow \Hom(\mathcal{A}_{1},W_{1}).$ The obstruction for such a lifting lies in the group $\ext^{1}(\mathcal{A}_{1},K_{0}(-1,0))$ which is zero since $\ext^{1}(\mathcal{A}_{1},K_{0}(-1,0))\cong\oplus_{i=1}^{n}A^{\ast}_{i}\otimes K_{0}\otimes \h^{1}(\tilde{\mathbb{P}}, \mathcal{O}(0,-E_{i}))=0.$ This means that all the extensions
$$0\longrightarrow K_{0}(-1,0)\longrightarrow \mathcal{A}\longrightarrow \mathcal{A}_{1}\longrightarrow0$$
split, hence $\mathcal{A}\cong K_{0}(-1,0)\oplus\mathcal{A}_{1}.$ Furthermore we have a sheaf monomorphism $\mathcal{A}\longrightarrow W.$

Dually, we want to lift the morphism $W_{1}\longrightarrow\mathcal{B}_{1}$ to a morphism $Q'_{0}\longrightarrow \mathcal{B}_{1}$, where $Q'_{0}=coker(K_{0}(-1,0)\longrightarrow W)$ i.e.

\begin{equation*}
    \xymatrix@C-0.5pc@R-0.5pc{W_{1}\ar@{^{(}->}[d]\ar@{->>}[r]&\mathcal{B}_{1} \\
    Q'_{0}\ar@{-->}[ru]&
    }
\end{equation*}

We also want a surjective morphism $\Hom(Q'_{0},\mathcal{B}_{1})\longrightarrow \Hom(W_{1},\mathcal{B}_{1})$ in order to do the lift. In this case the obstruction is in the group $\ext^{1}(L_{0}(1,0),\mathcal{B}_{1})$ which also vanishes since $\ext^{1}(L_{0}(1,0),\mathcal{B}_{1})\cong\oplus_{i=1}^{n}B_{i}\otimes L^{\ast}_{0}\otimes \h^{1}(\tilde{\mathbb{P}}, \mathcal{O}(0,-E_{i}))=0.$ This means that all the extensions
$$0\longrightarrow\mathcal{B}_{1} \longrightarrow \mathcal{B}\longrightarrow L_{0}(1,0)\longrightarrow0$$
split, hence $\mathcal{B}\cong L_{0}(1,0)\oplus\mathcal{B}_{1}.$ Furthermore we have an epimorphism $W\longrightarrow \mathcal{B}$.
Consequently we have a monad
\begin{equation*}
    M:\quad 0\longrightarrow K_{0}(-1,0)\oplus\mathcal{A}_{1}\longrightarrow W\longrightarrow L_{0}(1,0)\oplus\mathcal{B}_{1}\longrightarrow0
\end{equation*}
with cohomology $\mathcal{F}$ and the following associated display

\begin{equation}\label{M'0}
\xymatrix@C-1.2pc@R-1.3pc{         &     0       \ar[d]           &      0      \ar[d]        &                               &         \\
           & K_{0}(-1,0)\oplus\mathcal{A}_{1} \ar@{=}[r]\ar[d] & K_{0}(-1,0)\oplus\mathcal{A}_{1}  \ar[d] &                 &         \\
          0 \ar[r] &     X       \ar[r]\ar[d]     & W \ar[r] \ar[d] & L_{0}(1,0)\oplus\mathcal{B}_{1} \ar[r] \ar@{=}[d] &    0    \\
          0 \ar[r] & \mathcal{F} \ar[r]\ar[d]     &      Q     \ar[r] \ar[d] & L_{0}(1,0)\oplus\mathcal{B}_{1} \ar[r]      & 0 \\
                   &     0                        &      0                    &    &       \\
}
\end{equation}
from which we compute the Chern character of the cohomology $\mathcal{F};$
\begin{align}
ch(\mathcal{F})&=ch(Q)-ch(L_{0}(1,0))-ch(\mathcal{B}_{1})=ch(X)-ch(K_{0}(-1,0))-ch(\mathcal{A}_{1})\notag \\
&=rk(W)-D-D'-k_{0}-l_{0}-[(D'-D-k_{0}+l_{0})H-\Sigma_{i=1}^{n}(d_{i}+d'_{i})E_{i}]-\frac{(l_{0}+k_{0})}{2}\omega\notag
\end{align}
and by using the relations:
\begin{align}
&rk(W)=r+D+D'+k_{0}+l_{0}, \qquad &k_{0}+l_{0}=2k+|\overrightarrow{a}|^{2}\notag \\
&\Sigma_{i=1}^{n}(d_{i}-d'_{i})=-\Sigma_{i=1}^{n}a_{i}=-\bar{a} \qquad &k_{0}-l_{0}=\bar{a}=-(D-D')
\end{align}
we get
$$ch(\mathcal{F})=r+\Sigma_{i=1}^{n}a_{i}E_{i}-(k+\frac{|\overrightarrow{a}|^{2}}{2})\omega=ch(\mathcal{E})$$

One can also use the three displays of $M_{1}$, $M_{0}$  and $M$ to see that $\mathcal{F}\cong\mathcal{E}.$ Thus, a similar result to \cite[Proposition 1.8]{Buch} is given by the following :
\begin{thm}\label{thm2}
Let $\mathcal{E}$ be a framed torsion-free sheaf with Chern character $ch(\mathcal{E})=r+\Sigma_{i=1}^{n}a_{i}E_{i}-(k+\frac{|\overrightarrow{a}|^{2}}{2})\omega$ on a multi-blow-up, $\pi:\tilde{\mathbb{P}}\longrightarrow\mathbb{P}^{2}$, of $\mathbb{P}^{2}$ at $n$ distinct points. Then $\mathcal{E}$ is given by the cohomology of a monad:
$$M:\quad 0\longrightarrow K_{0}(-1,0)\oplus\mathcal{A}_{1}\longrightarrow W\longrightarrow L_{0}(1,0)\oplus\mathcal{B}_{1}\longrightarrow0$$
where the first map in $M$ is injective as a sheaf map and the second map is surjective, and where
\begin{align}
&A_{i}:=\ext^{1}(\mathcal{E},\mathcal{O}|_{E_{i}}(-1))^{\ast},\qquad B_{i}:=\Hom(\mathcal{E},\mathcal{O}|_{E_{i}}(-1))^{\ast} \notag \\
&\mathcal{A}_{1}:=\oplus_{i=1}^{n} A_{i}(-1,E_{i}), \qquad\quad \mathcal{B}_{1}:=\oplus_{i=1}^{n} B_{i}(1,-E_{i})\notag\\
&K_{0}:=\h^{1}(\tilde{\mathbb{P}}, \mathcal{E}(-2,\overrightarrow{1})),\qquad\quad L_{0}:=\h^{1}(\tilde{\mathbb{P}}, \mathcal{E}(-1,0)). \notag
\end{align}
\end{thm}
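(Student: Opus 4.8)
The plan is to assemble the proof of Theorem \ref{thm2} entirely from the intermediate constructions already carried out in this section, so that almost no new work is needed. The statement asserts three things: (i) $\pi_{\ast}\mathcal{E}$ is semi-stable; (ii) $\mathcal{E}$ is the cohomology of the monad $M$ with trivial middle term $W$; and (iii) the explicit identification of the vector spaces $A_i$, $B_i$, $K_0$, $L_0$ appearing in $M$. Part (i) was already established in the opening discussion of the section: since $\mathcal{E}$ is framed on $l_\infty$, its direct image $\pi_{\ast}\mathcal{E}$ is framed on the generic line of $\mathbb{P}^2$, and by the criterion quoted from \cite{Okonek} (p.~167), namely $H^0(\mathbb{P}^2,\pi_{\ast}\mathcal{E}(-1))=H^0(\mathbb{P}^2,(\pi_{\ast}\mathcal{E})^{\ast}(-1))=0$, this forces semi-stability. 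So for (i) I would simply recall this.

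For (ii), the work has been done in stages: first the monad $M_1:0\to\mathcal{A}_1\to W_1\to\mathcal{B}_1\to0$ with cohomology $\mathcal{E}$ was built from the display \eqref{display}, using Proposition~2.3.2 of \cite{King} and the vanishing of the relevant $\mathrm{Ext}^2$ groups. Then the Lemma on triviality of $W_1$ on each $E_i$ shows $W_1=\pi^{\ast}\pi_{\ast}W_1$ with $\mathcal{R}^1\pi_{\ast}W_1=0$; the Proposition together with Theorem~\ref{thm1} produces the monad $M'_0:0\to K_0(-1)\to W\to L_0(1)\to0$ on $\mathbb{P}^2$ with cohomology $\pi_{\ast}W_1$, and the Lemma shows $W$ is a trivial bundle. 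Pulling back gives $M_0$ on $\tilde{\mathbb{P}}$, and the lifting arguments — using the vanishing of $\mathrm{Ext}^1(\mathcal{A}_1,K_0(-1,0))$ and $\mathrm{Ext}^1(L_0(1,0),\mathcal{B}_1)$, which both reduce to $H^1(\tilde{\mathbb{P}},\mathcal{O}(0,-E_i))=0$ — show that the extensions $\mathcal{A}$ and $\mathcal{B}$ split, yielding the monad $M$ with display \eqref{M'0} and middle term the trivial bundle $W$. What remains is the identification $\mathcal{F}\cong\mathcal{E}$ of its cohomology. The Chern-character computation already recorded shows $ch(\mathcal{F})=ch(\mathcal{E})$; to upgrade this to an isomorphism I would chase the three displays of $M_1$, $M_0$, $M$ as indicated in the text: the morphism $\mathcal{A}_1\hookrightarrow W_1$ lifts to $\mathcal{A}_1\to X'_0\subset W$ and the composite $\mathcal{A}=K_0(-1,0)\oplus\mathcal{A}_1\to W$ is the monad map, whose cohomology contains $\mathcal{E}$ as a subsheaf via the natural map coming from $M_1$; comparing with the display \eqref{M'0} and using that both $\mathcal{F}$ and $\mathcal{E}$ are torsion-free with the same double dual (the snake-lemma argument gave $\Delta_X=\Delta_W$, hence the length-zero difference is the same) forces the inclusion $\mathcal{E}\hookrightarrow\mathcal{F}$ to be an equality.

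For (iii), I would observe that $A_i$ and $B_i$ are identified by definition, while the identifications $K_0=H^1(\tilde{\mathbb{P}},\mathcal{E}(-2,\overrightarrow{1}))$ and $L_0=H^1(\tilde{\mathbb{P}},\mathcal{E}(-1,0))$ — as opposed to the a priori expressions $H^1(\tilde{\mathbb{P}},W_1(-2,\overrightarrow{1}))$ and $H^1(\tilde{\mathbb{P}},W_1(-1,0))$ — follow by twisting the display \eqref{display} by $\mathcal{O}(-1,0)$ and by $\mathcal{O}(-2,\overrightarrow{1})$ and taking cohomology: since $\mathcal{A}_1$ and $\mathcal{B}_1$ are sums of line bundles $\mathcal{O}(\pm1,\mp E_i)$, their twists by these bundles have vanishing $H^0$, $H^1$, $H^2$ in the relevant ranges (these are exactly the cohomology vanishings used repeatedly above), so the columns of the twisted display give $H^1(\tilde{\mathbb{P}},W_1(-1,0))\cong H^1(\tilde{\mathbb{P}},\mathcal{E}(-1,0))$ and similarly for the other twist.

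The main obstacle is the isomorphism $\mathcal{F}\cong\mathcal{E}$ rather than the mere equality of Chern characters: one must verify that the lift of $\mathcal{A}_1\to W_1$ and the splitting $\mathcal{A}=K_0(-1,0)\oplus\mathcal{A}_1$ are compatible in such a way that the monad map of $M$ actually restricts, on the $\mathcal{A}_1$-summand, to the monad map of $M_1$ after the identification $W_1=$ (the middle of $M_0$'s pullback absorbed into $W$), so that the cohomology of $M$ receives the cohomology of $M_1$, i.e. $\mathcal{E}$, and no extra sections or torsion are introduced. Since everything in sight is torsion-free with controlled double dual, a diagram chase through \eqref{display}, the pulled-back display of $M'_0$, and \eqref{M'0} closes the argument, but it is the one place where some care with the maps — not just the objects — is required.
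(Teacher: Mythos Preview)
Your proposal is correct and follows essentially the same route as the paper: the theorem is stated as the summary of the preceding constructions ($M_1$, then $M'_0$ on $\mathbb{P}^2$, its pull-back $M_0$, and the lifted monad $M$), and the paper likewise reduces the identification $\mathcal{F}\cong\mathcal{E}$ to a diagram chase through the three displays, saying only ``One can use also the three displays of $M_1$, $M_0$ and $M$ to prove that $\mathcal{F}\cong\mathcal{E}$.'' Your part~(iii) anticipates exactly the content of Remark~\ref{remark}(I), which the paper places immediately after the theorem and proves by the same twisting-and-cohomology argument you outline.
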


\bigskip

\begin{rmk}
\begin{itemize}
\item[(i)] \textnormal{Using the display of the monad $M_{1}$ we can write
\begin{align}
&K_{0}:=\h^{1}(\tilde{\mathbb{P}}, \mathcal{E}(-2,\overrightarrow{1}))\oplus \h^{1}(\tilde{\mathbb{P}}, \mathcal{A}_{1}(-2,\overrightarrow{1}))\oplus \h^{1}(\tilde{\mathbb{P}}, \mathcal{B}_{1}(-2,\overrightarrow{1})) \notag \\
&L_{0}:=\h^{1}(\tilde{\mathbb{P}}, \mathcal{E}(-1,0))\oplus \h^{1}(\tilde{\mathbb{P}}, \mathcal{A}_{1}(-1,0))\oplus \h^{1}(\tilde{\mathbb{P}}, \mathcal{B}_{1}(-1,0)). \notag
\end{align}
From the Riemann-Roch formula one has $\h^{1}(\tilde{\mathbb{P}}, \mathcal{A}_{1}(-2,\overrightarrow{1}))\cong\oplus_{i=1}^{n}A_{i}\otimes \h^{1}(\tilde{\mathbb{P}}, \mathcal{O}(-3,\overrightarrow{1}+E_{i}))=0$ and also $\h^{1}(\tilde{\mathbb{P}}, \mathcal{B}_{1}(-2,\overrightarrow{1}))\cong\oplus_{i=1}^{n}B_{i}\otimes \h^{1}(\tilde{\mathbb{P}}, \mathcal{O}(-1,\overrightarrow{1}-E_{i}))=0.$
The vanishing holds also for $\h^{1}(\tilde{\mathbb{P}}, \mathcal{A}_{1}(-1,0))$ and $\h^{1}(\tilde{\mathbb{P}}, \mathcal{B}_{1}(-1,0)).$ Hence we have the forms of $K_{0}$ and $L_{0}$ given in the theorem above.}

\item[(ii)]\textnormal{The fact that we restrict to blow-ups of $\mathbb{P}^{2}$ made at distinct points, excluding the case of iterated blow-ups, is a technical assumption, and not a conceptual one. Without it the construction of explicit ADHM type data would be considerably more complicated. In this general case, one can have a monad, associated to $\mathcal{E}$ with the same conditions as in the theorem above, in which the first and last terms are non-trivial extensions of the form $$\xymatrix@C-0.5pc@R-2pc{0\ar[r]&K_{0}(-1,0)\ar[r]&\mathcal{A}\ar[r]&\mathcal{A}_{1}\ar[r]&0 \\ 0\ar[r]&\mathcal{A}_{1}\ar[r]&\mathcal{B}\ar[r]&L_{0}(1,0)\ar[r]&0}$$ rather then being direct sums, as given by \cite[Proposition 1.5]{Buch} in the case of bundles. This is because, for any $i,$ one has $\h^{1}(\tilde{\mathbb{P}},\mathcal{O}(0,-E_{i}))\neq0.$ So in what follows we will still be considering only the case of multiple blow-ups at distinct points.
}
\end{itemize}
\end{rmk}

\bigskip

In order to study the moduli space of framed torsion-free sheaves on $\tilde{\mathbb{P}}$ through monads, we need to know if families of monads of the type we are considering behave well in describing families of torsion-free sheaves; we start by reminding the following:

\begin{pr}
Let $M:0\longrightarrow\mathcal{A}\longrightarrow \mathcal{W}\longrightarrow \mathcal{B}\longrightarrow0$ and $M':0\longrightarrow\mathcal{A}'\longrightarrow \mathcal{W}'\longrightarrow \mathcal{B}'\longrightarrow0$ be two monads on a surface $\mathcal{S}$ with cohomologies $\mathcal{E}$ and $\mathcal{E}'$ respectively. The morphism $H:\Hom(M,M')\longrightarrow \Hom(\mathcal{E},\mathcal{E}')$ is surjective if
$$\ext^{1}(\mathcal{B},\mathcal{W}')=\ext^{1}(\mathcal{W},\mathcal{A}')=\ext^{2}(\mathcal{B},\mathcal{A}')=0.$$
Furthermore its kernel is identified with $\ext^{1}(\mathcal{B},\mathcal{A}')$ if $\Hom(\mathcal{B},\mathcal{W}')=\Hom(\mathcal{W},\mathcal{A}')=0.$
\end{pr}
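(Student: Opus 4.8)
The plan is to reconstruct a morphism of monads from a morphism of cohomology sheaves by chasing the \emph{displays} of the two monads, exactly as one does for bundles. Writing $K=\ker(\mathcal{W}\to\mathcal{B})$ and $Q=\mathrm{coker}(\mathcal{A}\to\mathcal{W})$, the monad $M$ comes with the four display sequences
$$0\longrightarrow\mathcal{A}\longrightarrow K\longrightarrow\mathcal{E}\longrightarrow0,\qquad 0\longrightarrow K\longrightarrow\mathcal{W}\longrightarrow\mathcal{B}\longrightarrow0,$$
$$0\longrightarrow\mathcal{A}\longrightarrow\mathcal{W}\longrightarrow Q\longrightarrow0,\qquad 0\longrightarrow\mathcal{E}\longrightarrow Q\longrightarrow\mathcal{B}\longrightarrow0,$$
and similarly with $K',Q'$ for $M'$. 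The key point will be that each step in the reconstruction is governed by an obstruction group which is precisely one of the three $Ext$-groups appearing in the hypothesis.

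To prove surjectivity, I would start from $\phi\in Hom(\mathcal{E},\mathcal{E}')$ and first lift it to a map $\phi_{K}\colon K\to K'$ compatible with $K\to\mathcal{E}$ and $K'\to\mathcal{E}'$: feeding the first display sequence of $M'$ into $Hom(K,-)$ places the obstruction in $Ext^{1}(K,\mathcal{A}')$, and feeding the second display sequence of $M$ into $Hom(-,\mathcal{A}')$ squeezes $Ext^{1}(K,\mathcal{A}')$ between $Ext^{1}(\mathcal{W},\mathcal{A}')=0$ and $Ext^{2}(\mathcal{B},\mathcal{A}')=0$, so $\phi_{K}$ exists. Next I would extend the composite $K\xrightarrow{\phi_{K}}K'\hookrightarrow\mathcal{W}'$ to a map $\omega\colon\mathcal{W}\to\mathcal{W}'$; applying $Hom(-,\mathcal{W}')$ to the second display sequence of $M$ puts the obstruction in $Ext^{1}(\mathcal{B},\mathcal{W}')=0$. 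The last two components are then formal: the composite $\mathcal{W}\xrightarrow{\omega}\mathcal{W}'\to\mathcal{B}'$ kills $K$, hence factors through $\mathcal{W}\to\mathcal{B}$ and defines $\beta\colon\mathcal{B}\to\mathcal{B}'$; and since $\phi_{K}$ sends the image of $\mathcal{A}\to K$ into the image of $\mathcal{A}'\to K'$ (because the composites to $\mathcal{E}$, $\mathcal{E}'$ vanish there and $\phi_{K}$ is compatible with them), $\omega$ sends $\mathcal{A}\subset\mathcal{W}$ into $\mathcal{A}'\subset\mathcal{W}'$, and the monomorphism $\mathcal{A}'\to\mathcal{W}'$ yields $\alpha\colon\mathcal{A}\to\mathcal{A}'$. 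One then checks straight from the construction that $(\alpha,\omega,\beta)$ is a morphism of monads whose induced map on cohomology equals $\phi$.

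For the kernel I would assume in addition $Hom(\mathcal{B},\mathcal{W}')=Hom(\mathcal{W},\mathcal{A}')=0$, keeping the hypotheses of the first part. To a morphism $(\alpha,\omega,\beta)\colon M\to M'$ inducing the zero map on cohomology I would attach an element of $Ext^{1}(\mathcal{B},\mathcal{A}')$: vanishing of the induced map means $\omega$ carries $K$ into the image of $\mathcal{A}'\to\mathcal{W}'$, so $\omega|_{K}$ factors through some $\gamma\colon K\to\mathcal{A}'$, and I take the image of $\gamma$ under the connecting map $Hom(K,\mathcal{A}')\to Ext^{1}(\mathcal{B},\mathcal{A}')$ of the second display sequence of $M$. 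This is injective because $Hom(\mathcal{W},\mathcal{A}')=0$ makes $Hom(K,\mathcal{A}')$ inject into $Ext^{1}(\mathcal{B},\mathcal{A}')$ (so $\gamma$, hence $\omega|_{K}$, is recovered), while $Hom(\mathcal{B},\mathcal{W}')=0$ forces $\omega$ to be determined by $\omega|_{K}$, and then $\alpha$, $\beta$ as well. It is surjective because $Hom(\mathcal{W},\mathcal{A}')=0$ and $Ext^{1}(\mathcal{W},\mathcal{A}')=0$ make that connecting map onto, and any chosen $\gamma$ can be promoted back to a morphism of monads by extending $K\xrightarrow{\gamma}\mathcal{A}'\hookrightarrow\mathcal{W}'$ over $\mathcal{W}$ (obstruction in $Ext^{1}(\mathcal{B},\mathcal{W}')=0$) and defining $\alpha$, $\beta$ formally as before. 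All maps in sight are additive, so the result is a group isomorphism; combining the two parts gives a short exact sequence $0\to Ext^{1}(\mathcal{B},\mathcal{A}')\to Hom(M,M')\to Hom(\mathcal{E},\mathcal{E}')\to0$.

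The main obstacle is not conceptual but bookkeeping: at every lift one must be sure the obstruction sits in exactly the advertised $Ext$-group, that the $Hom$-vanishings kill precisely the indeterminacy of the choices, and that the reconstructed triple induces \emph{exactly} $\phi$ rather than something merely conjugate to it. A more structural alternative is to note that a monad is quasi-isomorphic to its cohomology sheaf, so the hypercohomology spectral sequence of the complex $\mathcal{H}om^{\bullet}(M,M')$, whose $E_{1}$-page is assembled from the groups $Ext^{q}(M^{i},M'^{j})$, converges to $Ext^{\bullet}(\mathcal{E},\mathcal{E}')$; the stated vanishings then annihilate the interfering entries and differentials and produce the same exact sequence. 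In either form this is the torsion-free analogue of the computations in \cite{King}, Section~2, and \cite{Okonek}, 4.1.3, and carries over without change.
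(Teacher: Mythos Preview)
Your argument is correct and is precisely the standard display-chasing proof. Note, however, that the paper does not actually supply a proof of this proposition: it is stated with a footnote referring the reader to \cite{King}, Section~2, and \cite{Okonek}, 4.1.3, remarking that the argument ``extends easily to the case of torsion free sheaves''. What you have written is exactly that extension, and you even cite the same two sources at the end. So there is nothing to compare: you have filled in the details the paper deliberately omits, and your reconstruction matches the references it points to.
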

\begin{proof}
The proof for the locally-free case can be found in \cite[Chapter \textbf{II}, Lemma \textbf{4.1.3}]{Okonek}. In the torsion-free case, the steps of the proof can be repeated provided that one replaces suitably the global section functor $\Gamma$ by the $\Hom$ functor.
\end{proof}
In our case, one has
\begin{align}
&\ext^{1}(\mathcal{B},W')=0,\quad\quad \ext^{1}(W,\mathcal{A}')=0,\quad\quad \ext^{2}(\mathcal{B},\mathcal{A}')=0, \notag\\
&\Hom(\mathcal{B},W')=0, \quad\quad \Hom(W,\mathcal{A}')=0,\notag
\end{align}
by using the Riemann-Roch theorem. Hence $H:\Hom(M,M')\longrightarrow \Hom(\mathcal{E},\mathcal{E}')$ is surjective with kernel $\ext^{1}(\mathcal{B},\mathcal{A}')\cong\oplus_{i=1}^{n} B^{\ast}_{i}\otimes A_{i}'$. This means that the functor
\begin{equation}\label{functor}
\mathfrak{H}:\mathfrak{M}on\longrightarrow\mathfrak{F}ram,
\end{equation}
from the category of monads given by theorem ${\bf\ref{thm2}}$ to the category of framed torsion-free sheaves on $\tilde{\mathbb{P}},$ is full. In the next section we will get rid of the kernel of $H:=\mathfrak{H}(M,M')$ by reducing the monad, so that the corresponding reduced functor is fully faithfull. We also remark that since the operation of taking cohomology is functorial, then the morphisms of the monad of theorem ${\bf\ref{thm2}}$ are natural in the family of framed torsion-free sheaves.

The monad we constructed describes well families of framed torsion-free sheaves and one can talk about a moduli space of such objects, but we still have to fix the problem of the control on the dimensions of $A_{i}$ and $B_{i}$; their difference is constant $dimA_{i}-dimB_{i}=a_{i}$, but each dimension can, a priori, jump. Proceeding as in \cite[Secion {\bf 1}]{Buch}, we apply the functor $\Hom(\mathcal{E},\cdot)$ to the sequence \eqref{rest2} twisted by $\mathcal{O}(-1,E_{i})$ :
$$0\longrightarrow B_{i}^{\ast}\longrightarrow \ext^{1}(\mathcal{E},\mathcal{O}(-1,0))\longrightarrow \ext^{1}(\mathcal{E},\mathcal{O}(-1,E_{i}))\longrightarrow A_{i}^{\ast}\longrightarrow0.$$
We split this sequence by defining $V_{i}^{\ast}=ker(\ext^{1}(\mathcal{E},\mathcal{O}(-1,E_{i}))\longrightarrow A_{i}^{\ast}),$ thus getting
$$0\longrightarrow B_{i}^{\ast}\longrightarrow \ext^{1}(\mathcal{E},\mathcal{O}(-1,0))\longrightarrow V_{i}^{\ast}\longrightarrow0$$
$$0\longrightarrow V_{i}^{\ast}\longrightarrow \ext^{1}(\mathcal{E},\mathcal{O}(-1,E_{i}))\longrightarrow A_{i}^{\ast}\longrightarrow0$$
dualizing the sequences and using the fact that
$$\ext^{1}(\mathcal{E},\mathcal{O}(-1,0))^{\ast}=\ext^{1}(\mathcal{O}(-1,0),\mathcal{E}(-3,\overrightarrow{1}))
=\h^{1}(\tilde{\mathbb{P}}, \mathcal{E}(-2,\overrightarrow{1}))\quad\textrm{and}$$
$$\ext^{1}(\mathcal{E},\mathcal{O}(-1,E_{i}))^{\ast}=\ext^{1}(\mathcal{O}(-1,E_{i}),\mathcal{E}(-3,\overrightarrow{1}))
=\h^{1}(\tilde{\mathbb{P}}, \mathcal{E}(-2,\overrightarrow{1}-E_{i}))$$
which have dimensions, respectively, $k+\frac{1}{2}\Sigma_{j=1}^{n}a_{j}(a_{j}+1)$ and $k+\frac{1}{2}\Sigma_{j=1}^{n}a_{j}(a_{j}+1)-a_{i}$,
one has the isomorphisms :
\begin{equation}\label{isom}
A_{i}\oplus V_{i}\cong \h^{1}(\tilde{\mathbb{P}}, \mathcal{E}(-2,\overrightarrow{1}-E_{i})),\qquad B_{i}\oplus V_{i}\cong \h^{1}(\tilde{\mathbb{P}}, \mathcal{E}(-2,\overrightarrow{1})).
\end{equation}
On the other hand consider, for each $i,$ the following extension
\begin{equation}\label{W0}
0\longrightarrow \mathcal{O}(-1,E_{i})\longrightarrow W_{0}\longrightarrow\mathcal{O}(1,-E_{i})\longrightarrow0
\end{equation}
The group classifying such extensions is $\ext^{1}(\mathcal{O}(1,-E_{i}),\mathcal{O}(-1,E_{i}))\cong\h^{1}(\tilde{\mathbb{P}},\mathcal{O}(-2, 2E_{i}))\cong\C$ whose dimension is 1. Taking the non-trivial extension in this group we show that $W_{0}$ is a trivial 2-bundle: the extension obtained after restricting to the generic line $H$ corresponds to a non zero element in $\h^{1}(H,\mathcal{O}_{H}(-2))=\C$, since there is an isomorphism $\h^{1}(\tilde{\mathbb{P}},\mathcal{O}(-2, 2E_{i}))\cong\h^{1}(H,\mathcal{O}_{H}(-2)),$ furthermore it is a twisted Euler sequence $0\longrightarrow\mathcal{O}_{H}(-1)\longrightarrow\mathcal{O}_{H}^{\oplus2}\longrightarrow T_{H}(-1)\longrightarrow0$ associated to the line $H.$ The twisting is given by $\mathcal{O}_{H}(-1)$ and the twisted tangent space $T_{H}(-1)$ is identified with $\mathcal{O}_{H}(1).$ Thus $W_{0}|_{H}$ is the trivial bundle $\mathcal{O}_{H}^{\oplus2}$. Restricting the extension \eqref{W0} to the exceptional divisor $E_{j}$, one concludes, in the same way as above, that if $i=j$ then $W_{0}|_{E_{j}}$ corresponds to a twisted Euler sequence associated to the line $E_{i},$ where the twisting is given by $\mathcal{O}_{E_{i}}(-1),$ hence $W_{0}|_{E_{j}}$ is the trivial 2-bundle $\mathcal{O}_{E_{j}}^{\oplus2}$. When $i\neq j$ it is obvious that the restriction is a trivial 2-bundle. Consequently $W_{0}$ is a trivial 2-bundle on $\tilde{\mathbb{P}}$ since its Chern classes are both zero.

Now we have to twist the extension \eqref{W0} by $V_{i}$;
\begin{equation}
0\longrightarrow V_{i}(-1,E_{i})\longrightarrow V_{i}\otimes W_{0}\longrightarrow V_{i}(1,-E_{i})\longrightarrow0
\end{equation}
but since adding such exact sequences, for every $i$, to the monad in the theorem \textbf{\ref{thm2}} will not change the cohomology $\mathcal{E},$ and moreover, by using the isomorphisms \eqref{isom} one has a generalization of \cite[Prposition 1.10]{Buch} given by the following
\begin{pr}\label{prop}
Let $\mathcal{E}$ be a torsion-free sheaf satisfying the conditions of theorem \textbf{\ref{thm2}}. Then there exists a monad $M$ of the form:
$$\xymatrix@R-1pc{M:& 0\ar[r]& \oplus_{i=0}^{n}K_{i}(-1,E_{i})\ar[r]^{\qquad\quad\alpha}& W \ar[r]^{\beta\qquad\quad}&\oplus_{i=0}^{n}L_{i}(1,-E_{i})\ar[r]&0}$$
whose cohomology is $\mathcal{E},$ and in which $\alpha$ is injective as a sheaf morphism and $\beta$ is surjective. Moreover
$$\left\{\begin{array}{l} K_{i}=\h^{1}(\tilde{\mathbb{P}}, \mathcal{E}(-2,\overrightarrow{1}-E_{i})) \\ L_{i}=\h^{1}(\tilde{\mathbb{P}}, \mathcal{E}(-2,\overrightarrow{1})) \end{array}\right. i\neq0$$
$$\left\{\begin{array}{l} K_{0}:=\h^{1}(\tilde{\mathbb{P}}, \mathcal{E}(-2,\overrightarrow{1})) \\ L_{0}:=\h^{1}(\tilde{\mathbb{P}}, \mathcal{E}(-1,0)). \end{array}\right.\qquad\quad$$ and $E_{0}:=0.$
\end{pr}

\bigskip\bigskip

\begin{rmk}\label{remark}
\begin{itemize}
\item[(i)]  \textnormal{In the proposition above we have used the same notation as in theorem \textbf{\ref{thm2}}, while the spaces and the monads are different.}

\item[(ii)] \textnormal{Since the dimensions of our spaces are as follows:
\begin{align}
&dim K_{0}=k+\frac{|a|^{2}+\bar{a}}{2},\qquad dim L_{0}=k+\frac{|a|^{2}-\bar{a}}{2}\notag\\
&dim K_{i}=k+\frac{|a|^{2}+\bar{a}}{2}-a_{i},\qquad dim L_{i}=k+\frac{|a|^{2}+\bar{a}}{2}\notag
\end{align}
then $$dim K_{0}+\Sigma_{i=1}^{n}dim K_{i}=(n+1)k+\frac{n+1}{2}(|a|^{2}+\bar{a})-\bar{a}$$ and $$dim L_{0}+\Sigma_{i=1}^{n}dim L_{i}=(n+1)k+\frac{n+1}{2}|a|^{2}+\frac{n-1}{2}\bar{a}=(n+1)k+\frac{n+1}{2}(|a|^{2}+\bar{a})-\bar{a},$$ thus the left and right hand terms in the monad have the same rank $dim L_{0}+\Sigma_{i=1}^{n}dim L_{i}.$
Hence  $rkW=2(dim L_{0}+\Sigma_{i=1}^{n}dim L_{i})+r.$}

\item[(iii)] \textnormal{Using the display of the monad above one can see that the kernel $X$ of the map $\beta$ is a locally free sheaf (since it is the kernel of a bundle map). The fact that the cohomology is a torsion-free sheaf implies that the first map $\alpha$ fails to be of maximal rank on finitely many points. These are the singularity set of the torsion-free sheaf that we are describing. Conversely if the map $\alpha$ is not of maximal rank on some finite set of points, then the cohomology of the monad would be a torsion-free sheaf with singularity set, exactly, the set of the points where $\alpha$ vanishes.}
\end{itemize}
\end{rmk}

\section{The ADHM data}\label{ADHM}

In this section we describe briefly the ADHM data associated to the monad of proposition \textbf{\ref{prop}}. This will lead to a presentation of the moduli space of framed torsion-free sheaves under study as quotient of a space of some matrices satisfying some constraints by the action of an algebraic group. We now summarize some notations due to Penrose and Buchdahl (see \cite[Section \textbf{2}]{Buch}).

We denote the homogeneous coordinates on $\mathbb{P}^{2}$ by $(z^{0},z^{1},z^{2})$. The line at infinity $l_{\infty}$ is given by the equation $z^{2}=0$. Let $p_{i}$ be one of the points in $\mathbb{P}^{2}$ at which we perform a blow-up, and assume $p_{i}\notin l_{\infty}$  for all $i$. So all these points are in the chart $[z^{0},z^{1},1]$ and denote their inhomogeneous coordinates there by $p^{a}_{i}=(p^{A}_{i},1)$ where $A=0,1$ and $a=0,1,2$. Locally, near every blow-up point, $\tilde{\mathbb{P}}$ can be described by $\{([z^{0},z^{1},z^{2}],[w_{i}^{0},w_{i}^{1}])\in\mathbb{P}^{2}\times\mathbb{P}^{1} \mid (z^{0}-p^{0}_{i}z^{2})w^{1}_{i}=(z^{1}-p^{1}_{i}z^{2})w^{0}_{i}\}.$
Each blow-up equation
\begin{equation}\label{blowup}
(z^{0}-p^{0}_{i}z^{2})w^{1}_{i}=(z^{1}-p^{1}_{i}z^{2})w^{0}_{i}, \qquad\qquad (w^{0},w^{1})\in\mathbb{C}^{2}\setminus\{0\},
\end{equation}
is satisfied if and only if there exists a unique scalar $\lambda_{i}$ such that
\begin{equation}\label{blowup1}
\lambda_{i}w_{i}^{1}=z^{1}-p^{1}_{i}z^{2}\quad\textnormal{and}\quad\lambda_{i}w_{i}^{0}=z^{0}-p^{0}_{i}z^{2}.
\end{equation}
Moreover $z^{0}-p^{0}_{i}z^{2}$ and $z^{1}-p^{1}_{i}z^{2}$ are both sections of $\mathcal{O}(H)$ vanishing on $E_{i},$ while $w_{i}^{0}$ and $w_{i}^{1}$ can be viewed as sections of $\mathcal{O}(H-E_{i}).$ Thus the homogeneous equation \eqref{blowup} uniquely determines a section $\lambda_{i}$ of a line bundle $\mathcal{O}(E_{i})$ over the blow-up such that \eqref{blowup1} is satisfied. Note also that the restriction of $w^{A}_{i}$ to the exceptional divisor $E_{i}$ gives the homogeneous coordinates $(w^{0}_{i},w^{1}_{i})$ of $E_{i}$.
Finally we denote $$\left\{\begin{array}{ll}w_{i0}:=-w^{1}_{i} & \\ w_{i1}:=w^{0}_{i}&\end{array}\right.$$ Assuming the convention of summing over repeated upper and lower indices, it follows that $w^{A}_{i}w_{iA}=0.$ More generally if we have a pair of matrices as $m_{A}=(m_{0},m_{1})$ we use the same two index notations as above, and we write $m^{A}m_{A}=m_{0}m_{1}-m_{1}m_{0}.$ For the direct sum $V\oplus V$ and the morphism $(m_{0},m_{1}):V\longrightarrow V\oplus V$ we use the notation $m_{A}:V\longrightarrow V_{A}$. Given a morphism $V\oplus V\longrightarrow V$ where $(v_{0},v_{1})\longrightarrow m^{0}v_{0}+m^{1}v_{1}$, this would be written as $m^{A}:V_{A}\longrightarrow V$.

\bigskip\bigskip
Let us consider a monad $M$ as in Proposition \textbf{\ref{prop}}
$$\xymatrix{M:&0\ar[r]&\oplus_{i=0}^{n}K_{i}(-1,E_{i})\ar[r]^{\quad\quad\alpha}& W\ar[r]^{\beta\quad\quad}& \oplus_{i=0}^{n}L_{i}(1,-E_{i})\ar[r]&0}$$
with cohomology a framed torsion-free sheaf $\mathcal{E}$. The map $\beta$ is surjective everywhere on $\tilde{\mathbb{P}},$ while the map  $\alpha$ is injective except at some finite set of points in $\tilde{\mathbb{P}}.$ The condition $\beta\circ\alpha=0$ will give a set of matrix equations. At this point we can reduce the linear data, associated to the monad above and the automorphism group acting on it, in a similar way to reference \cite[section \textbf{3}]{Buch}, to which we refer for more details, since the difference between the above monad and the one given in \cite{Buch} resides only in the condition imposed on the rank of the map $\alpha$; in our case the rank of $\alpha$ is maximal except at finitely many points. It follows that the moduli space $\mathcal{M}^{\tilde{\mathbb{P}}}_{\vec{a}, k}$ of framed torsion-free sheaves $\mathcal{E}$, on $\tilde{\mathbb{P}}$, with Chern character $ch(\mathcal{E})=r+\Sigma_{i=1}^{n}a_{i}E_{i}-(k+\frac{|\vec{a}|^{2}}{2})\omega$, is identified with the quotient $P'/G' $, where $P'$ is the space of configurations $(a,a_{00}^{A},c,d)$ defined as follows: $a$ is a non-singular matrix of the form
\begin{equation}
a=\left[\begin{array}{lllll}a_{00}&a_{01}&a_{02}&\cdots&a_{0n}\\
a_{10}&a_{11}&0&\cdots&0\\
a_{20}&0&a_{22}&\cdots&0\\
\vdots&\vdots&\vdots&\ddots&\vdots\\
a_{n0}&0&0&\cdots&a_{nn}\\
\end{array}\right]
\end{equation}
where $a_{ij}\in \Hom(K_{j},L_{i}),$  $c\in \Hom(K_{0},\mathbb{C}^{r})$, $d\in \Hom(\mathbb{C}^{r},L_{0})$ and $a_{00A}\in \Hom(K_{0},L_{0}),$ for $A=0,1$. The configuration $(a,a_{00}^{A},c,d)$ satisfies the equation
\begin{equation}\label{constr}
(q^{A}a^{-1}q_{A})^{00}+dc=0
\end{equation}
where the $00$ subscript means the $00$ entry and the matrix $q^{A}$ is defined as
\begin{equation}
q^{A}=\left[\begin{array}{lllll}-a_{00}^{A}&p^{A}_{1}a_{01}&p^{A}_{2}a_{02}&\cdots&p^{A}_{n}a_{0n} \\
p^{A}_{1}a_{10}&p^{A}_{1}a_{11}&0 &\cdots&0 \\
p^{A}_{2}a_{20}&0&p^{A}_{2}a_{22}&\cdots&0 \\
\vdots&\vdots&\vdots&\ddots&\vdots \\
p^{A}_{n}a_{n0}&0&0&\cdots&p^{A}_{n}a_{nn}
\end{array}\right]
\end{equation}
The group $G'$ consists of the non-singular transformations of the form
\begin{equation}
g=\left[\begin{array}{lllll}g_{00}&g_{01}&g_{02}&\cdots&g_{0n}\\
0&g_{11}&0&\cdots&0\\
0&0&g_{22}&\cdots&0\\
\vdots&\vdots&\vdots&\ddots&\vdots\\
0&0&0&\cdots&g_{nn}\\
\end{array}\right], \quad h=\left[\begin{array}{lllll}h_{00}&0&0&\cdots&0\\
h_{10}&h_{11}&0&\cdots&0\\
h_{20}&0&h_{22}&\cdots&0\\
\vdots&\vdots&\vdots&\ddots&\vdots\\
h_{n0}&0&0&\cdots&h_{nn}\\
\end{array}\right]
\end{equation}

The set of transformations that fix the configuration $(a,q^{A},c,d)$ are of the form $g=1+am$, $h=(1+ma)^{-1}$, where the matrix $m$ is of the form
\begin{equation}
m=\left[\begin{array}{lllll}0&0&0&\cdots&0\\
0&m_{11}&0&\cdots&0\\
0&0&m_{22}&\cdots&0\\
\vdots&\vdots&\vdots&\ddots&\vdots\\
0&0&0&\cdots&m_{nn}\\
\end{array}\right]\in \Hom(\oplus_{i=0}^{n}L_{i}, \oplus_{i=0}^{n}K_{i})
\end{equation}

Fixing an isomorphism $L_{i}\cong K_{0}$, one can choose the matrix $a$ so that $a_{0i}=1 \quad \forall i>0$. To preserve this form of $a$, the transformation $g$ must be of the above form and such that $g_{ii}=(h_{00}+a_{ii}h_{i0})^{-1}$. One checks that for every matrix $h$ such that $h_{00}+a_{ii}h_{i0}$ is non-singular, there exists a matrix $D$ of the form $D=diag(d_{00},d_{11},\cdots,d_{nn})$ such that $h=(1+ma)^{-1}D,$ where $a$ of the above form. We remark that the space of matrices $m,$ as above, is exactly the kernel of $H:\End(M)\longrightarrow \End(\mathcal{E}).$ Moreover if we consider only actions of $(g,h)$ having the form
\begin{equation}\label{group-form}
h=diag(h_{00},h_{11},\cdots,h_{nn}),\quad\quad
g=\left[\begin{array}{lllll}g_{00}&g_{01}&g_{02}&\cdots&g_{0n}\\
0&h^{-1}_{00}&0&\cdots&0\\
0&0&h^{-1}_{00}&\cdots&0\\
\vdots&\vdots&\vdots&\ddots&\vdots\\
0&0&0&\cdots&h^{-1}_{00}\\
\end{array}\right].
\end{equation}
then one has a free action since the isotropy subgroup, for a given configuration, is essentially the discarded transformations $m$. Finally the moduli space $\mathcal{M}^{\tilde{\mathbb{P}}}_{\vec{a}, k}$ has another presentation as the quotient $P/G $, where
\begin{equation}\label{spaceP}
P=\left\{\begin{array}{l}\textnormal{configurations } (a,a_{00}^{A},c,d) \\ \textnormal{ as above} \end{array}| \begin{array}{l}a_{i0} \textnormal{ is equal to the identity matrix } \mathbb{I}_{K_{0}}  \textnormal{ on } K_{0} \\  \textnormal{ for each } i>0 \textnormal{ and satisfying the equation \eqref{constr}}\end{array}\right\}
\end{equation}
The group $G$ consists of the non-singular transformations of the form \eqref{group-form} acting explicitly as follows
\begin{align}\label{P and G}
&a_{00}\longrightarrow g_{00}(a_{00}+\Sigma_{i=1}^{n}g^{-1}_{00}g_{0i})h_{00}, \quad a^{A}_{00}\longrightarrow g_{00}(a^{A}_{00}-\Sigma_{i=1}^{n}g^{-1}_{00}g_{0i}p_{i}^{A})h_{00}  \\
&a_{ii}\longrightarrow h^{-1}_{00}a_{ii}h_{ii}, \quad a_{0i}\longrightarrow g_{00}(a_{0i}+g^{-1}_{00}g_{0i}a_{ii})h_{ii}, \quad i>0, \quad c\longrightarrow ch_{00}, \quad d\longrightarrow g_{00}d.\notag
\end{align}

The above free action of the group, obtained from the reduction of the monad, has isotropy subgroup $G_{\mathcal{C}}=\{Id\},$ for any configuration $\mathcal{C}=(a,a_{00}^{A},c,d)$. This will give a nonsingular quotient as we shall prove later, and one can easily check that the dimension of this moduli space is $dim\mathcal{M}^{\tilde{\mathbb{P}}}_{\vec{a}, k}=2r(k+\frac{|\vec{a}|^{2}}{2})-|\vec{a}|^{2}$.

One can reconstruct the monad from the data above by the following;
\begin{equation}\label{alpha}
\alpha=\left[\begin{array}{lllll}a_{00}z_{A}+a_{00A}z^{2}&a_{01}w_{1A}&a_{02}w_{2A}&\cdots&a_{0n}w_{nA}\\
\mathbb{I}_{K_{0}}\lambda_{1} w_{1A}&a_{11}w_{1A}&\quad0&\cdots&\quad0\\
\mathbb{I}_{K_{0}}\lambda_{2} w_{2A}&\quad0&a_{22}w_{2A}&\cdots&\quad0\\
\quad\vdots&\quad\vdots&\quad\vdots&\ddots&\quad\vdots\\
\mathbb{I}_{K_{0}}\lambda_{n} w_{nA}&\quad0&\quad0&\cdots&a_{nn}w_{nA}\\
cz^{2}&\quad0&\quad0&\cdots&\quad0
\end{array}\right]
\end{equation}

\begin{equation}\label{beta}
\beta=\left[\begin{array}{llllll}z^{A}+b^{A}_{00}z^{2}&b^{A}_{01}z^{1}&b^{A}_{02}z^{2}&\cdots& b^{A}_{0n}z^{n}& dz^{2}\\
\quad0&w^{1A}&0&\cdots&\quad0&0\\
\quad0&0&w^{2A}&\cdots&\quad0&0\\
\quad\vdots&\vdots&\vdots&\ddots&\quad\vdots&\vdots\\
\quad0&0&0&\cdots&w^{nA}&0\end{array}\right]
\end{equation}

Here the $b^{A}_{0i}$ are given by the following: we define the matrix $b^{A}=\tilde{a}^{A}a^{-1},$ where $\tilde{a}^{A}:=\left[\begin{array}{lllll}a^{A}_{00}&-p^{A}_{1}&-p^{A}_{2}&\cdots&-p^{A}_{n}\end{array}\right].$ We would like to remind that $p^{A}_{i}$'s are the inhomogeneous coordinates, in $\mathbb{P}^{2},$ of the blow-up points. Then $b^{A}$ is of the form $b^{A}:=\left[\begin{array}{llll}b^{A}_{00}&b^{A}_{01}&\cdots&b^{A}_{0n}\end{array}\right].$ Recall that the non-degeneracy conditions imposed on the data are the surjectivity of $\beta$ everywhere on $\tilde{\mathbb{P}}$, and injectivity of $\alpha$ generically, i.e., $\alpha$ can be non-injective only at finitely many points in $\tilde{\mathbb{P}}.$

Finally we remark that Since the isotropy subgroups were discarded when we reduced the monad, and since that was exactly the kernel of the map $H:\End(M,M)\longrightarrow\End(\mathcal{E}),$ then the functor $\mathfrak{H}$, in \eqref{functor}, from the category of torsion-free sheaves on $\tilde{\mathbb{P}}$ to the category of monads obtained from \eqref{alpha} and \eqref{beta} is fully faithfull.

\section{Moduli functor and universal monads}\label{unversal}

In the following we want to introduce some necessary material which will enable us to prove, in the last section of this paper, that the moduli space $\mathcal{M}^{\tilde{\mathbb{P}}}_{\vec{a}, k}$ is fine. As a first step we consider families of framed torsion-free sheaves, parameterized by some scheme $S.$ For each family we shall build a monad $\mathcal{M}$ on the product $\tilde{\mathbb{P}}\times S$ whose cohomology is the family we started with. But before starting this program let us give some definitions and set some notations for our moduli functor problem.

Let $\mathfrak{M}^{\tilde{\mathbb{P}}}_{\vec{a}, k}: \mathfrak{S}ch \longrightarrow \mathfrak{S}et$ be the contravariant functor from the category of noetherian schemes of finite type to the category of sets, which is defined as follows;
to every such scheme $S$ we associate the set $$\mathfrak{M}^{\tilde{\mathbb{P}}}_{\vec{a}, k}(S)=\left\{[\mathcal{F}] \Large{\textbf{/}}\small{ \begin{array}{l} \quad\mathcal{F} \textrm{ is a coherent sheaf on }\tilde{\mathbb{P}}\times S,\hspace{0.1cm}\textrm{flat on } S, \textrm{ and such that}\\ \mathcal{F}\otimes k(s)\cong\mathcal{E}\textrm{ is a framed torsion-free sheaf on } \tilde{\mathbb{P}}\textrm{ with }\\ \quad\quad\quad ch(\mathcal{E})=r+\Sigma_{i=1}^{n}a_{i}E_{i}-(k-\frac{|\vec{a}|^{2}}{2})\omega \end{array}} \right\}$$
where $[\mathcal{F}]$ stands for the class of the sheaf $\mathcal{F}$ under the following equivalence: $\mathcal{F}'\in[\mathcal{F}]$ if there is a line bundle $L$ on $S$ such that $\mathcal{F}'\cong\mathcal{F}\otimes\tilde{q}^{\ast}L.$ The morphism $\tilde{q}$ is the projection $\tilde{\mathbb{P}}\times S\longrightarrow S.$ We also denote by $k(s)$ the residue field of a point $s\in S.$
\begin{definition}\cite[Chapter \textbf{I}, $\S$ \textbf{2}]{newstead}
A scheme ${\mathcal{M}^{\tilde{\mathbb{P}}}_{\vec{a}, k}}$ is called a coarse moduli space if the following conditions are satisfied:
\begin{itemize}
\item There is a natural transformation $$\Phi: \mathfrak{M}^{\tilde{\mathbb{P}}}_{\vec{a}, k}(\bullet)\longrightarrow \Hom(\bullet,\mathcal{M}^{\tilde{\mathbb{P}}}_{\vec{a}, k})$$ which is a bijection for every closed point $s\in S$.

\item For every scheme $\mathcal{R}$ and every natural transformation $$\Psi: \mathfrak{M}^{\tilde{\mathbb{P}}}_{\vec{a}, k}(\bullet)\longrightarrow \Hom(\bullet,\mathcal{R})$$ there is a unique morphism of schemes $f:\mathcal{M}^{\tilde{\mathbb{P}}}_{\vec{a}, k}\longrightarrow \mathcal{R}$ such that the diagram of natural transformations
\begin{equation}
\xymatrix@C-0.8pc@R-0.9pc{\mathfrak{M}^{\tilde{\mathbb{P}}}_{\vec{a}, k}(\bullet)\ar[r]\ar[rd]& \Hom(\bullet,\mathcal{M}^{\tilde{\mathbb{P}}}_{\vec{a}, k})\ar[d]^{f_{\ast}} \\
&\Hom(\bullet,\mathcal{R})
}
\end{equation}
commutes.
\end{itemize}
\end{definition}

\begin{definition}\cite[Chapter \textbf{I}, $\S$ \textbf{2}]{newstead}
A scheme $\mathcal{M}^{\tilde{\mathbb{P}}}_{\vec{a}, k}$ is called a fine moduli space for the functor $\mathfrak{M}^{\tilde{\mathbb{P}}}_{\vec{a}, k}(\bullet)$ if the above natural transformation $\Phi$ is an isomorphism.
\end{definition}

Given a noetherian scheme of finite type $S$, let $\mathcal{F}$ be a coherent sheaf on $\tilde{\mathbb{P}}\times S$ which is flat on $S.$ For every point $s\in S$, $\mathcal{F}_{s}:=\mathcal{F}\otimes k(s)$ is a framed torsion-free sheaf $\mathcal{E}$ on $\tilde{\mathbb{P}}$ with Chern character $ch(\mathcal{E})=r+\Sigma_{i=1}^{n}a_{i}E_{i}-(k-\frac{|\vec{a}|^{2}}{2})\omega .$  We consider the following diagram:
\begin{equation}
\xymatrix@C-0.5pc@R-0.5pc{& \mathcal{F} \ar[d]& \\
&\tilde{\mathbb{P}}\times S\ar[ld]_{\tilde{p}}\ar[rd]^{\tilde{q}}& \\
\tilde{\mathbb{P}}&& S
}
\end{equation}
where $\tilde{p}$ and $\tilde{q}$ are the natural projections onto the two factors. The aim is to construct a monad $\mathcal{M}$ on $\tilde{\mathbb{P}}\times S$ which is associated to the sheaf $\mathcal{F}$, i.e., $Coh(\mathcal{M})=\mathcal{F}$.
For every two sheaves, $\mathcal{F}$ on $\tilde{\mathbb{P}}$ and $\mathcal{G}$ on $S$, we use the following notation: $\mathcal{F}\boxtimes \mathcal{G}:=\tilde{p}^{\ast}\mathcal{F}\otimes\tilde{q}^{\ast}\mathcal{G}.$
We consider also, for a morphism $f:X\longrightarrow Y$ of noetherian schemes of finite type, the functor $\mathcal{H}om_{f}(\mathcal{G}, \bullet):= f_{\ast}\circ\mathcal{H}om(\mathcal{G},\bullet)$ called the relative $\mathcal{H}om$-functor, and we denote its $i$-th right derived functors by $\mathcal{E}xt^{i}_{f}(\mathcal{G}, \bullet)$. For more details see for example \cite{Kleiman}.

We remind the reader that a sheaf $\mathcal{F}$ on a topological space $X$ is \emph{flasque} (or \emph{flabby}) if for every inclusion $V\subseteq U$ of open sets, the restriction map $\mathcal{F}(U)\longrightarrow\mathcal{F}(V)$ is surjective.

\bigskip

Before proceeding we want to introduce the following useful result, which is a kind of "relative local-to-global" spectral sequence:

\begin{pr}\label{loctoglob}
Let $f:X\longrightarrow Y$ be a morphism of noetherian schemes of finite type, and $\mathcal{G}$ a coherent sheaf on $X$ flat on $Y.$ Then for any coherent sheaf $\mathcal{J}$ on $X$ flat on $Y$, there exists a spectral sequence $E_{r}^{p,q}$ with $E_{2}$-term $E_{2}^{p,q}=\h^{p}(Y,\mathcal{E}xt_{f}^{q}(\mathcal{G},\mathcal{J}))$ which converges to $E_{\infty}^{p+q}=\ext^{p+q}_{X}(\mathcal{G},\mathcal{J}).$
\end{pr}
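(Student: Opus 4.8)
The plan is to identify the asserted spectral sequence with the Grothendieck spectral sequence attached to a composition of two left exact functors, so that the statement becomes the relative version of the classical local-to-global $\mathcal{E}xt$ spectral sequence (to which it reduces when $Y=X$ and $f=\mathrm{id}$). Set $F:=\mathcal{H}om_f(\mathcal{G},\bullet)=f_{\ast}\circ\mathcal{H}om_{\mathcal{O}_X}(\mathcal{G},\bullet)$, viewed as a functor from $\mathcal{O}_X$-modules to $\mathcal{O}_Y$-modules, and $G:=\Gamma(Y,\bullet)$. The first step is the tautological identity of functors
$$G\circ F=\Gamma\bigl(Y,\,f_{\ast}\mathcal{H}om_{\mathcal{O}_X}(\mathcal{G},\bullet)\bigr)=\Gamma\bigl(X,\,\mathcal{H}om_{\mathcal{O}_X}(\mathcal{G},\bullet)\bigr)=Hom_X(\mathcal{G},\bullet),$$
the middle equality being $(f_{\ast}\mathcal{H})(Y)=\mathcal{H}(f^{-1}Y)=\mathcal{H}(X)$. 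Hence $R^n(G\circ F)=Ext^n_X(\mathcal{G},\bullet)$, while $R^qF=\mathcal{E}xt^q_f(\mathcal{G},\bullet)$ by the definition adopted just before the statement, and $R^pG=H^p(Y,\bullet)$.

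To invoke Grothendieck's theorem it then remains to verify that $F$ sends injective $\mathcal{O}_X$-modules to $\Gamma(Y,\bullet)$-acyclic $\mathcal{O}_Y$-modules; since the category of $\mathcal{O}_X$-modules has enough injectives and $F$ is left exact, this is the only hypothesis needed. The key point is the standard lemma that $\mathcal{H}om_{\mathcal{O}_X}(\mathcal{G},\mathcal{I})$ is flasque when $\mathcal{I}$ is injective: for opens $U\subseteq V$ of $X$ with inclusion $j$ one has $Hom_{\mathcal{O}_U}(\mathcal{G}|_U,\mathcal{I}|_U)=Hom_{\mathcal{O}_V}(j_{!}(\mathcal{G}|_U),\mathcal{I}|_V)$, and since $j_{!}(\mathcal{G}|_U)\hookrightarrow\mathcal{G}|_V$ and $\mathcal{I}|_V$ is injective, every section over $U$ extends to one over $V$. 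Because $f_{\ast}$ preserves flasqueness, $F(\mathcal{I})=f_{\ast}\mathcal{H}om_{\mathcal{O}_X}(\mathcal{G},\mathcal{I})$ is flasque on $Y$ and therefore $\Gamma(Y,\bullet)$-acyclic. The Grothendieck spectral sequence for $G\circ F$ then reads
$$E_2^{p,q}=H^p\bigl(Y,\,\mathcal{E}xt^q_f(\mathcal{G},\mathcal{J})\bigr)\Longrightarrow Ext^{p+q}_X(\mathcal{G},\mathcal{J}),$$
which is precisely the assertion (with the $\mathcal{E}$ of the displayed abutment in the statement read as $\mathcal{G}$); it is a first-quadrant spectral sequence, so convergence is automatic.

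Finally I would include a remark on the covering hypothesis: it is not needed for the mere existence of the spectral sequence, but it is what makes it effective in the applications. When the members of $\mathcal{Y}$ and their preimages can be chosen affine the morphism $f$ is affine, so $R^{>0}f_{\ast}=0$ and $\mathcal{E}xt^q_f(\mathcal{G},\mathcal{J})=f_{\ast}\mathcal{E}xt^q_{\mathcal{O}_X}(\mathcal{G},\mathcal{J})$, whose restriction to each member of $\mathcal{Y}$ is the ordinary module $\mathrm{Ext}^q$ over the corresponding affine ring; alternatively the same spectral sequence can be built by hand from the \v{C}ech double complex of $f^{-1}(\mathcal{Y})$ applied to an $\mathcal{H}om(\mathcal{G},\bullet)$-acyclic resolution of $\mathcal{J}$, which is the form most convenient for the later flatness and base-change arguments. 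The only step that demands genuine care is the flasqueness lemma, together with checking that the $\mathcal{E}xt^q_f$ defined in the paper as $R^q\bigl(f_{\ast}\circ\mathcal{H}om(\mathcal{G},\bullet)\bigr)$ is exactly the term $R^qF$ appearing on the $E_2$-page; the rest is a formal application of the Grothendieck spectral sequence (see, e.g., \cite{Kleiman} for the relative $\mathcal{E}xt$ formalism).
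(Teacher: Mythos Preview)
Your argument is correct. Both you and the paper hinge on the same key lemma---that $\mathcal{H}om_{\mathcal{O}_X}(\mathcal{G},\mathcal{I})$ is flasque for $\mathcal{I}$ injective---but the packaging differs. The paper builds the spectral sequence explicitly: it takes an injective resolution $\mathcal{J}\to I^\bullet$, forms the \v{C}ech double complex $C^\bullet(\mathcal{Y},\mathcal{H}om_f(\mathcal{G},I^\bullet))$ with respect to the given cover, and computes the two associated spectral sequences; one degenerates by the flasqueness lemma to identify the abutment with $Ext^{p+q}_X(\mathcal{G},\mathcal{J})$, the other has the stated $E_2$-page. You instead invoke the Grothendieck spectral sequence for the composite $\Gamma(Y,\bullet)\circ\mathcal{H}om_f(\mathcal{G},\bullet)$ as a black box, checking only the acyclicity hypothesis. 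Your route is cleaner and, as you correctly note, does not actually require the covering hypothesis for mere existence; the paper's \v{C}ech model uses the cover as scaffolding (and needs $f^{-1}(\mathcal{Y})$ to be a cover of $X$ so that \v{C}ech cohomology on $Y$ of $f_\ast$ of a flasque sheaf vanishes). What the paper's approach buys is exactly what you mention in your final remark: a concrete double complex that is directly suited to the base-change and flatness computations that follow.
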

\begin{proof}
Let $\mathcal{J}\longrightarrow I^{\bullet}$ be an injective resolution of the sheaf $\mathcal{J}$, and consider a Cartan-Eilenberg resolution given by the \v Cech complex associated to the complex of sheaves $\mathcal{H}om_{f}(\mathcal{G}, I^{\bullet})$ for a suitable open cover $\mathcal{Y}$ of $Y$. This defines a double complex $C^{\bullet}(\mathcal{Y}, \mathcal{H}om_{f}(\mathcal{G}, I^{\bullet}))$ with differentials $$\left\{\begin{array}{l}\delta_{1}=d \textrm{ the differential associated to the injective resolution of }\mathcal{J} \\ \delta_{2}=\delta \textrm{ the differential associated to the }\check{C}\textrm{ech complex.} \end{array}\right.$$
There are two spectral sequences associated with this double complex \cite{Grothendieck}, with $E_{2}$-terms:
$$'E_{2}^{p,q}=\h^{p}_{d}[\h^{q}_{\delta}(C^{\bullet}(\mathcal{Y}, \mathcal{H}om_{f}(\mathcal{G}, I^{\bullet})))]$$
$$''E_{2}^{p,q}=\h^{p}_{\delta}[\h^{q}_{d}(C^{\bullet}(\mathcal{Y}, \mathcal{H}om_{f}(\mathcal{G}, I^{\bullet})))]$$
The $E_{1}$-term of the first spectral sequence is given by:
\begin{align}
'E_{1}^{p,q}&=\h^{q}_{\delta}(C^{\bullet}(\mathcal{Y}, \mathcal{H}om_{f}(\mathcal{G}, I^{p}))) \notag\\
&=\h^{q}_{\delta}(C^{\bullet}(\mathcal{Y}, f_{\ast}\circ\mathcal{H}om(\mathcal{G}, I^{p}))) \notag
\end{align}
By \cite[Lemma \textbf{II. 7. 3}]{Godement} the sheaf $\mathcal{H}om(\mathcal{G}, I^{p})$ is flasque for every term $I^{p}$ of the injective resolution. Moreover the direct image of a flasque sheaf is flasque \cite[\textbf{II} Theorem \textbf{3.1.1}]{Godement}. Hence $$'E_{1}^{p,q}=\left\{\begin{array}{ll} \h^{0}(Y,f_{\ast}\circ\mathcal{H}om(\mathcal{G}, I^{p}))& q=0 \\ \quad 0 & q\neq0\end{array}\right.$$ On the other hand we have
\begin{align}
\h^{0}(Y,f_{\ast}\circ\mathcal{H}om(\mathcal{G}, I^{p}))&=\h^{0}(X,\mathcal{H}om(\mathcal{G}, I^{p})) \notag \\
&=\Hom(\mathcal{G}, I^{p}) \notag
\end{align}
Thus $$'E_{1}^{p,q}=\left\{\begin{array}{ll} \Hom(\mathcal{G}, I^{p})& q=0\\ \quad 0 & q\neq0\end{array}\right.$$ This spectral sequence degenerates at the second step and converges to $'E^{p+q}_{\infty}=\ext^{p+q}_{X}(\mathcal{G}, \mathcal{J})$.

The second spectral sequence has $E_{2}$-term:
\begin{align}
''E_{2}^{p,q}&=\h^{p}_{\delta}(C^{\bullet}(\mathcal{Y}, \mathcal{H}^{q}_{d}(\mathcal{H}om_{f}(\mathcal{G}, I^{\bullet})))) \notag\\
&=\h^{p}_{\delta}(C^{\bullet}(\mathcal{Y}, \mathcal{E}xt^{q}_{f}(\mathcal{G}, \mathcal{J}))) \notag
\end{align}
Then
\begin{equation*}
''E_{2}^{p,q}=\h^{p}(Y, \mathcal{E}xt^{q}_{f}(\mathcal{G}, \mathcal{J})).
\end{equation*}

\end{proof}

This proposition will be used in the next step to prove the existence of a display of a monad on $\tilde{\mathbb{P}}\times S$ associated to a family of framed torsion-free sheaves on $\tilde{\mathbb{P}}$. Consider the following extensions:
\begin{align}
&0\longrightarrow\mathcal{U}\longrightarrow\mathcal{K}\longrightarrow\mathcal{F}\longrightarrow0 \in\ext^{1}(\mathcal{F},\mathcal{U}) \\
&0\longrightarrow\mathcal{F}\longrightarrow\mathcal{Q}\longrightarrow\mathcal{V}\longrightarrow0 \in\ext^{1}(\mathcal{F},\mathcal{V})
\end{align}
where $\mathcal{U}=\oplus_{i=0}^{n}\mathcal{O}(-1,E_{i})\boxtimes\mathcal{U}_{i}$, with

$$\left\{\begin{array}{l}\mathcal{U}_{0}=\mathcal{R}^{1}\tilde{q}_{\ast}(\mathcal{F}\otimes\tilde{p}^{\ast}\mathcal{O}(-2,\vec{1})) \\ \mathcal{U}_{i}=\mathcal{R}^{1}\tilde{q}_{\ast}(\mathcal{F}\otimes\tilde{p}^{\ast}\mathcal{O}(-2,\vec{1}-E_{i})) \end{array}\right.$$
and $\mathcal{V}=\oplus_{i=0}^{n}\mathcal{O}(1,-E_{i})\boxtimes\mathcal{V}_{i}$ with

$$\left\{\begin{array}{l}\mathcal{V}_{0}=\mathcal{R}^{1}\tilde{q}_{\ast}(\mathcal{F}\otimes\tilde{p}^{\ast}\mathcal{O}(-2,0)) \\ \mathcal{V}_{i}=\mathcal{R}^{1}\tilde{q}_{\ast}(\mathcal{F}\otimes\tilde{p}^{\ast}\mathcal{O}(-2,\vec{1})). \end{array}\right.$$
We denoted by $\mathcal{R}^{i}\tilde{q}_{\ast}$ the i-th right derived functor of the direct image functor $\tilde{q}_{\ast}$ from the category of coherent sheaves of $\mathcal{O}_{\tilde{\mathbb{P}}\times S}$-modules to the category of coherent sheaves of $\mathcal{O}_{S}$-modules. We also remind the reader that we defined $E_{0}:=0.$ We remark that the extensions \eqref{A} and \eqref{B} exist since the groups $\ext^{1}(\mathcal{F},\mathcal{U})$ and $\ext^{1}(\mathcal{V},\mathcal{F})$ classifying them respectively are non trivial, as it will be proved in proposition ${\bf\ref{universal-existence}}$ below.

The sheaves $\mathcal{U}$ and $\mathcal{V}$ are locally free on $\tilde{\mathbb{P}}\times S$; let us consider, for the moment, the sheaves $\mathcal{U}_{0}$ and $\mathcal{U}_{i}$: by Grauert's theorem (\cite[\textbf{III. 12. 9}]{Hart}), for every point $s\in S$, there are maps:
\begin{align}
&\tilde{q}_{\ast}(\mathcal{F}\otimes\tilde{p}^{\ast}\mathcal{O}(-2,\vec{1}))\otimes k(s)\longrightarrow \h^{0}(\tilde{\mathbb{P}},\mathcal{E}(-2,\vec{1}))=0 \notag\\
&\tilde{q}_{\ast}(\mathcal{F}\otimes\tilde{p}^{\ast}\mathcal{O}(-2,\vec{1}-E_{i}))\otimes k(s)\longrightarrow \h^{0}(\tilde{\mathbb{P}},\mathcal{E}(-2,\vec{1}-E_{i}))=0 \notag\\
&\mathcal{R}^{2}\tilde{q}_{\ast}(\mathcal{F}\otimes\tilde{p}^{\ast}\mathcal{O}(-2,\vec{1}))\otimes k(s)\longrightarrow \h^{2}(\tilde{\mathbb{P}},\mathcal{E}(-2,\vec{1}))=0 \notag\\
&\mathcal{R}^{2}\tilde{q}_{\ast}(\mathcal{F}\otimes\tilde{p}^{\ast}\mathcal{O}(-2,\vec{1}-E_{i}))\otimes k(s)\longrightarrow \h^{2}(\tilde{\mathbb{P}},\mathcal{E}(-2,\vec{1}-E_{i}))=0 \notag.
\end{align}

Since all of these maps go to zero, the sheaves in the left-hand side have rank zero at all $s\in S$, thus they vanish identically (\cite[\textbf{III.12.5}]{Hart}). By the Riemann-Roch theorem, the dimensions of $\h^{1}(\tilde{\mathbb{P}},\mathcal{E}(-2,\vec{1}))$ and $\h^{1}(\tilde{\mathbb{P}},\mathcal{E}(-2,\vec{1}-E_{i}))$ are constant, hence $\mathcal{U}_{0}$ and $\mathcal{U}_{i}$ are locally free.
In the same way one can show that $\mathcal{V}_{0}$ and $\mathcal{V}_{i}$ are locally free.

\begin{pr}\label{universal-existence}
There exist non trivial extensions
\begin{align}
&0\longrightarrow\mathcal{U}\longrightarrow\mathcal{K}\longrightarrow\mathcal{F}\longrightarrow0 \in\ext^{1}(\mathcal{F},\mathcal{U}) \label{A}\\
&0\longrightarrow\mathcal{F}\longrightarrow\mathcal{Q}\longrightarrow\mathcal{V}\longrightarrow0 \in\ext^{1}(\mathcal{V},\mathcal{F}) \label{B}
\end{align}
\end{pr}

\begin{proof}
The statement of this proposition will be proved by showing that neither $\ext^{1}(\mathcal{F},\mathcal{U})$ nor $\ext^{1}(\mathcal{V},\mathcal{F})$ is trivial.

\vspace{0.5cm}

\underline{$\ext^{1}(\mathcal{V},\mathcal{F})$}:

\vspace{0.5cm}

One has $\ext^{1}(\mathcal{V},\mathcal{F})=\oplus_{i=0}^{n}\ext^{1}(\mathcal{O}(1,-E_{i})\boxtimes\mathcal{V}_{i}, \mathcal{F}).$ By the result of theorem \textbf{\ref{loctoglob}}, it follows that the terms contributing to each component $\ext^{1}(\mathcal{O}(1,-E_{i})\boxtimes\mathcal{V}_{i}, \mathcal{F})$ are given by $$\left\{\begin{array}{l} \h^{0}(S,\mathcal{E}xt_{\tilde{q}}^{1}(\tilde{q}^{\ast}\mathcal{V}_{i},\mathcal{F}\otimes \tilde{p}^{\ast}\mathcal{O}(-1,E_{i}))) \\  \h^{1}(S,\mathcal{H}om_{\tilde{q}}(\tilde{q}^{\ast}\mathcal{V}_{i},\mathcal{F}\otimes \tilde{p}^{\ast}\mathcal{O}(-1,E_{i}))) \end{array}\right.$$

The second term $\h^{1}(S,\mathcal{H}om_{\tilde{q}}(\tilde{q}^{\ast}\mathcal{V}_{i},\mathcal{F}\otimes \tilde{p}^{\ast}\mathcal{O}(-1,E_{i})))$ is trivial; this is because the sheaf $\mathcal{H}om_{\tilde{q}}(\tilde{q}^{\ast}\mathcal{V}_{i},\mathcal{F}\otimes \tilde{p}^{\ast}\mathcal{O}(-1,E_{i}))$ is identically zero. Indeed one has
\begin{align}
\mathcal{H}om_{\tilde{q}}(\tilde{q}^{\ast}\mathcal{V}_{i},\mathcal{F}\otimes \tilde{p}^{\ast}\mathcal{O}(-1,E_{i}))&= \tilde{q}_{\ast}[\mathcal{H}om(\tilde{q}^{\ast}\mathcal{V}_{i},\mathcal{F}\otimes \tilde{p}^{\ast}\mathcal{O}(-1,E_{i}))] \notag \\
&=\mathcal{V}^{\ast}_{i}\otimes \tilde{q}_{\ast}(\mathcal{F}\otimes \tilde{p}^{\ast}\mathcal{O}(-1,E_{i})).\notag
\end{align}
The second equality is obtained from the fact that $\mathcal{V}_{i}$ is locally free and by using the projection formula. Moreover, from the natural morphism $$\tilde{q}_{\ast}(\mathcal{F}\otimes \tilde{p}^{\ast}\mathcal{O}(-1,E_{i}))\otimes k(s)\longrightarrow\h^{0}(\tilde{\mathbb{P}},\mathcal{E}(-1,E_{i}))=0$$
at any closed point $s\in S,$ the claim follows.
\vspace{0.5cm}

Recall that the Grothendieck spectral sequence associated to the composition of the direct image functor $\tilde{q}_{\ast}:\mathcal{C}oh(\tilde{\mathbb{P}}\times S)\longrightarrow\mathcal{C}oh(S)$ with the local Hom functor $\mathcal{H}om(\mathcal{G},-):\mathcal{C}oh(\tilde{\mathbb{P}}\times S)\longrightarrow\mathcal{C}oh(\tilde{\mathbb{P}}\times S)$ for a fixed coherent sheaf $\mathcal{G}$ on $\tilde{\mathbb{P}}\times S$
has a second term $$E_{2}^{s,t}=\mathcal{R}^{t}\tilde{q}_{\ast}[\mathcal{E}xt^{s}(\mathcal{G},\mathcal{H})].$$ and converges to $E_{\infty}^{s,t}=\mathcal{E}xt_{\tilde{q}}^{s+t}(\mathcal{G},\mathcal{H}).$

It follows that the contributing terms to $\h^{0}(S,\mathcal{E}xt_{\tilde{q}}^{1}(\tilde{q}^{\ast}\mathcal{V}_{i},\mathcal{F}\otimes \tilde{p}^{\ast}\mathcal{O}(-1,E_{i}))),$ from the Grothendieck spectral sequence $E_{r}^{s,t}$ (we take the sheaf $\mathcal{G}$ to be $\tilde{q}^{\ast}\mathcal{V}_{i}$), are given by $$\left\{\begin{array}{l} \tilde{q}_{\ast}[\mathcal{E}xt^{1}(\tilde{q}^{\ast}\mathcal{V}_{i},\mathcal{F}\otimes \tilde{p}^{\ast}\mathcal{O}(-1,E_{i}))] \\ \mathcal{R}^{1}\tilde{q}_{\ast}[\mathcal{H}om(\tilde{q}^{\ast}\mathcal{V}_{i},\mathcal{F}\otimes \tilde{p}^{\ast}\mathcal{O}(-1,E_{i}))]  \end{array}\right.$$
Since $\mathcal{V}^{i}$ is locally free then $\mathcal{E}xt^{1}(\tilde{q}^{\ast}\mathcal{V}_{i},\mathcal{F}\otimes \tilde{p}^{\ast}\mathcal{O}(-1,E_{i}))=0.$

The other term can be written as
\begin{align}
\mathcal{R}^{1}\tilde{q}_{\ast}[\mathcal{H}om(\tilde{q}^{\ast}\mathcal{V}_{i},\mathcal{F}\otimes \tilde{p}^{\ast}\mathcal{O}(-1,E_{i}))]&=\mathcal{R}^{1}\tilde{q}_{\ast}[\tilde{q}^{\ast}\mathcal{V}^{\ast}_{i}\otimes\mathcal{F}\otimes \tilde{p}^{\ast}\mathcal{O}(-1,E_{i}))] \notag \\
&=\mathcal{V}^{\ast}_{i}\otimes\mathcal{R}^{1}\tilde{q}_{\ast}[\mathcal{F}\otimes \tilde{p}^{\ast}\mathcal{O}(-1,E_{i}))].\notag
\end{align}
The natural morphism, at a point $s\in S,$ given by $$\mathcal{R}^{1}\tilde{q}_{\ast}[\mathcal{F}\otimes \tilde{p}^{\ast}\mathcal{O}(-1,E_{i}))]\otimes k(s)\longrightarrow\h^{1}(\tilde{\mathbb{P}}, \mathcal{E}(-1,E_{i}))$$ shows that the fibre of the sheaf is not zero. By using the Riemann-Roch formula, it follows that the dimension of the space $\h^{1}(\tilde{\mathbb{P}}, \mathcal{E}(-1,E_{i}))$ is non zero and independent from the point $s,$ since $\mathcal{F}$ is $S-$flat and $\h^{0,2}(\tilde{\mathbb{P}}, \mathcal{E}(-1,E_{i}))=0.$

Thus, each component of the extension $\ext^{1}(\mathcal{V},\mathcal{F})$ is given by $$\ext^{1}(\mathcal{O}(-1,E_{i})\boxtimes\mathcal{V}_{i}, \mathcal{F})=\h^{0}(\tilde{\mathbb{P}}\times S,\mathcal{H}om(\mathcal{V}_{i},\mathcal{R}^{1}\tilde{q}_{\ast}[\mathcal{F}\otimes \tilde{p}^{\ast}\mathcal{O}(-1,E_{i})]))$$

Now for $i=0$ one has $\mathcal{H}om(\mathcal{V}_{0},\mathcal{R}^{1}\tilde{q}_{\ast}[\mathcal{F}\otimes \tilde{p}^{\ast}\mathcal{O}(-1,0)])=\mathcal{E}nd(\mathcal{V}_{0}).$ Moreover we have a surjective morphism $\ext^{1}(\mathcal{V},\mathcal{F})\to \mathcal{E}nd(\mathcal{V}_{0}),$ which is the projection on the zero-th factor. Then there exist at least one non-trivial extension which is sent to the identity element $1\in\End(\mathcal{V}_{0}).$ Hence at least one non-trivial extension \eqref{B} exists.

\vspace{0.5cm}

\underline{$\ext^{1}(\mathcal{F},\mathcal{U})$}:

\vspace{0.5cm}

One has $\ext^{1}(\mathcal{F},\mathcal{U})=\oplus_{i=0}^{n}\ext^{1}(\mathcal{F},\mathcal{U}_{i}\boxtimes \tilde{p}^{\ast}\mathcal{O}(1,-E_{i})).$ By using the spectral sequence in \textbf{\ref{loctoglob}} one shows that $$\ext^{1}(\mathcal{F},\mathcal{U}_{i}\boxtimes \tilde{p}^{\ast}\mathcal{O}(1,-E_{i}))=\h^{0}(S,\mathcal{E}xt_{\tilde{q}}^{1}(\mathcal{F}\otimes \tilde{p}^{\ast}\mathcal{O}(1,-E_{i}),\tilde{q}^{\ast}\mathcal{U}_{i}))$$ since the sheaf $\mathcal{H}om_{\tilde{q}}(\mathcal{F}\otimes \tilde{p}^{\ast}\mathcal{O}(1,-E_{i}),\tilde{q}^{\ast}\mathcal{U}_{i})$ vanishes identically; indeed one has
$$\mathcal{H}om_{\tilde{q}}(\mathcal{F}\otimes \tilde{p}^{\ast}\mathcal{O}(1,-E_{i}),\tilde{q}^{\ast}\mathcal{U}_{i})\otimes k(s)\to\Hom(\mathcal{E}(1,-E_{i}),\h^{1}(\tilde{\mathbb{P}},\mathcal{E}(-2,\vec{1}-E_{i})))$$
but $$\Hom(\mathcal{E}(1,-E_{i}),\h^{1}(\tilde{\mathbb{P}},\mathcal{E}(-2,\vec{1}-E_{i})))=\h^{1}(\tilde{\mathbb{P}},\mathcal{E}(-2,\vec{1}-E_{i})) \otimes\underbrace{\h^{2}(\tilde{\mathbb{P}},\mathcal{E}(-2,\vec{1}-E_{i}))^{\ast}}_{0}=0.$$

On the other hand we have $\mathcal{E}xt_{\tilde{q}}^{1}(\mathcal{F}\otimes \tilde{p}^{\ast}\mathcal{O}(1,-E_{i}),\tilde{q}^{\ast}\mathcal{U}_{i})=\mathcal{E}xt_{\tilde{q}}^{1}(\mathcal{F}\otimes \tilde{p}^{\ast}\mathcal{O}(-2,\vec{1}-E_{i}),\tilde{p}^{\ast}\mathcal{O}(-3,\vec{1})\otimes \tilde{q}^{\ast}\mathcal{U}_{i}).$
By using the relative duality morphism, \cite[Main Theorem]{Kleiman}, one has $$\mathcal{E}xt_{\tilde{q}}^{1}(\mathcal{F}\otimes \tilde{p}^{\ast}\mathcal{O}(1,-E_{i}),\tilde{q}^{\ast}\mathcal{U}_{i})\cong \mathcal{H}om_{S}(\mathcal{R}^{1}\tilde{q}_{\ast}(\mathcal{F}\otimes \tilde{p}^{\ast}\mathcal{O}(-2,\vec{1}-E_{i})),\mathcal{U}_{i})=\mathcal{E}nd(\mathcal{U}_{i}).$$
Notice that we used the fact that the relative dualizing sheaf is isomorphic to $\tilde{p}^{\ast}\mathcal{O}(-3,\vec{1}).$
Thus $\ext^{1}(\mathcal{F},\mathcal{U})=\oplus_{i=0}^{n}\End(\mathcal{U}_{i}).$ Moreover the morphism $\ext^{1}(\mathcal{F},\mathcal{U})\to\End(\mathcal{U}_{i})$ is surjective, $\forall i=0,\cdots,n,$ since it is just the natural projection on the $i-$th factor. Hence for every $i$ there exists at least one non-trivial projection which is sent to $1\in\End(\mathcal{U}_{i}).$ It follows that $\ext^{1}(\mathcal{F},\mathcal{U})$ is not trivial.

\end{proof}

\begin{cor}
There exist two extensions
\begin{align}
&0\longrightarrow\mathcal{U}\longrightarrow\mathcal{W}\longrightarrow\mathcal{Q}\longrightarrow0 \label{C}\\
&0\longrightarrow\mathcal{K}\longrightarrow\mathcal{W}\longrightarrow\mathcal{V}\longrightarrow0 \label{D}
\end{align}
such that the extension \eqref{A} and \eqref{B} fit in the following display of a monad
\begin{equation}
\xymatrix@R-1pc@C-1pc{&0\ar[d]&0\ar[d]&& \\
&\mathcal{U}\ar[d]\ar@{=}[r]&\mathcal{U}\ar[d]&& \\
0\ar[r]&\mathcal{K}\ar[d]\ar[r]&\mathcal{W}\ar[d]\ar[r]&\mathcal{V}\ar@{=}[d]\ar[r]&0 \\
0\ar[r]&\mathcal{F}\ar[d]\ar[r]&\mathcal{Q}\ar[d]\ar[r]&\mathcal{V}\ar[r]&0\\
&0&0&&
}
\end{equation}
\end{cor}

\begin{proof}
By proposition \textbf{\ref{King223}}, the two extensions, \eqref{A} and \eqref{B}, fit into the display of a monad on $\tilde{\mathbb{P}}\times S$ if and only if their $\ext$-product in $\ext^{2}(\mathcal{V},\mathcal{U})$ is trivial. To show this vanishing property we use the "relative local to global" spectral sequence that we constructed above; we have $$E_{2}^{p,q}=\h^{p}(S, \mathcal{E}xt^{\tilde{q}}_{\tilde{\tilde{q}}}(\mathcal{V}, \mathcal{U}))\Longrightarrow \ext^{p+q}_{\tilde{\mathbb{P}}\times S}(\mathcal{V}, \mathcal{U})$$

The spectral sequence terms which contributes to $\ext^{2}(\mathcal{V},\mathcal{U})$ are $$\h^{0}(S, \mathcal{E}xt^{2}_{\tilde{\tilde{q}}}(\mathcal{V}, \mathcal{U})),\quad \h^{1}(S, \mathcal{E}xt^{1}_{\tilde{q}}(\mathcal{V}, \mathcal{U}))\quad \textrm{ and }\quad \h^{2}(S, \mathcal{H}om_{\tilde{q}}(\mathcal{V}, \mathcal{U})).$$ Next we will prove that both $\h^{2}(S, \mathcal{H}om_{\tilde{q}}(\mathcal{V}, \mathcal{U}))$ and $\h^{0}(S, \mathcal{E}xt^{2}_{\tilde{q}}(\mathcal{V}, \mathcal{U}))$ vanish, hence $\ext^{2}(\mathcal{V},\mathcal{U})=\h^{1}(S, \mathcal{E}xt^{1}_{\tilde{q}}(\mathcal{V}, \mathcal{U}))$.

\bigskip

\underline{$\h^{2}(S, \mathcal{H}om_{\tilde{q}}(\mathcal{V}, \mathcal{U}))$}:

$$\mathcal{H}om_{\tilde{q}}(\mathcal{V}, \mathcal{U})\otimes k(s)=\oplus_{i,j=0}^{n}\tilde{q}_{\ast}[\mathcal{H}om(\tilde{q}^{\ast}\mathcal{V}_{i}, \tilde{q}^{\ast}\mathcal{U}_{j}\otimes\tilde{p}^{\ast}\mathcal{O}(-2,E_{i}+E_{j}))]\otimes k(s),$$
There is a natural map
$$\oplus_{i,j=0}^{n}\tilde{q}_{\ast}[\mathcal{H}om(\tilde{q}^{\ast}\mathcal{V}_{i}, \tilde{q}^{\ast}\mathcal{U}_{j}\otimes\tilde{p}^{\ast}\mathcal{O}(-2,E_{i}+E_{j}))]\otimes k(s)\qquad\qquad\qquad$$
$$\qquad\qquad\qquad\qquad\qquad\longrightarrow \oplus_{i,j=0}^{n}\Hom(\tilde{q}^{\ast}\mathcal{V}_{i}(s), \tilde{q}^{\ast}\mathcal{U}_{j}(s)\otimes\mathcal{O}(-2,E_{i}+E_{j}))$$ which is an isomorphism if it is surjective. Then the pull-back $\tilde{q}^{\ast}\mathcal{V}_{i}(s)$, of the stalk $\mathcal{V}_{i}(s)$ at a point $s$, is just the pull-back associated to the map $\tilde{\mathbb{P}}\longrightarrow s=\spec k(s)$, hence $\tilde{q}^{\ast}\mathcal{V}_{i}(s)$ is constant on $\tilde{\mathbb{P}}$. This is also true for the pull-back $\tilde{q}^{\ast}\mathcal{U}_{j}(s)$ of $\mathcal{U}_{j}(s)$. Thus
$$\oplus_{i,j=0}^{n}\Hom(\tilde{q}^{\ast}\mathcal{V}_{i}(s), \tilde{q}^{\ast}\mathcal{U}_{j}(s)\otimes\mathcal{O}(-2,E_{i}+E_{j}))= \oplus_{i,j=0}^{n} [\tilde{q}^{\ast}\mathcal{V}_{j}(s)]^{\ast}\otimes\tilde{q}^{\ast}\mathcal{U}_{j}(s)\otimes \underbrace{\h^{0}(\tilde{\mathbb{P}},\mathcal{O}(-2,E_{i}+E_{j}))}_{0}$$
every $i,j=0,...,n,$ and the above natural map is obviously surjective. Furthermore this implies the vanishing of $\mathcal{H}om_{\tilde{q}}(\mathcal{V}, \mathcal{U})$ identically. Hence $\h^{2}(S, \mathcal{H}om_{\tilde{q}}(\mathcal{V}, \mathcal{U}))=0.$

\bigskip

\underline{$\h^{0}(S, \mathcal{E}xt^{2}_{\tilde{q}}(\mathcal{V}, \mathcal{U}))$}: By the relative Serre duality \cite{Kleiman} we have
$$\mathcal{E}xt^{2}_{\tilde{q}}(\mathcal{V}, \mathcal{U})\otimes k(s)=\oplus_{i,j=0}^{n}\{\tilde{q}_{\ast}[\mathcal{H}om(\tilde{q}^{\ast}\mathcal{U}_{j}, \tilde{q}^{\ast}\mathcal{V}_{i}\otimes\tilde{p}^{\ast}\mathcal{O}(-1,\vec{1}-E_{i}-E_{j}))]\}^{\ast}\otimes k(s)$$

In the same way as above, there is  a map
$$\oplus_{i,j=0}^{n}\{\tilde{q}_{\ast}[\mathcal{H}om(\tilde{q}^{\ast}\mathcal{U}_{j}, \tilde{q}^{\ast}\mathcal{V}_{i}\otimes\tilde{p}^{\ast}\mathcal{O}(-1,\vec{1}-E_{i}-E_{j}))]\}^{\ast}\otimes k(s)\qquad\qquad\qquad\qquad$$ $$\qquad\qquad\qquad\qquad\longrightarrow \oplus_{i,j=0}^{n}\Hom(\tilde{q}^{\ast}\mathcal{U}_{j}(s), \tilde{q}^{\ast}\mathcal{V}_{i}(s)\otimes\mathcal{O}(-1,\vec{1}-E_{i}-E_{j}))^{\ast}$$
which is an isomorphism if it is surjective. This map is surjective since for every $i,j=0,...,n$ in the direct sum we have
$$\Hom(\tilde{q}^{\ast}\mathcal{U}_{j}(s), \tilde{q}^{\ast}\mathcal{V}_{i}(s)\otimes\mathcal{O}(-1,\vec{1}-E_{i}-E_{j}))^{\ast}= \tilde{q}^{\ast}\mathcal{U}_{j}(s)\otimes[\tilde{q}^{\ast}\mathcal{V}_{i}(s)]^{\ast}\otimes \underbrace{\h^{0}(\tilde{\mathbb{P}},\mathcal{O}(-1,\vec{1}-E_{i}-E_{j}))^{\ast}}_{0}$$ which implies that $\mathcal{E}xt^{2}_{\tilde{q}}(\mathcal{V}, \mathcal{U})=0$ and hence $\h^{0}(S, \mathcal{E}xt^{2}_{\tilde{q}}(\mathcal{V}, \mathcal{U}))=0.$

\bigskip

Computing the fiber of the sheaf $\mathcal{E}xt^{1}_{\tilde{q}}(\mathcal{V}, \mathcal{U})$, over a point $s\in S$, one can check that it is not zero, hence $\ext^{2}(\mathcal{V},\mathcal{U})=\h^{1}(S, \mathcal{E}xt^{1}_{\tilde{q}}(\mathcal{V}, \mathcal{U}))$ may not vanish in general. However one can overcome this obstruction as follows: the sequences \eqref{A} and \eqref{B} are classified by $\ext^{1}(\mathcal{F}, \mathcal{U})$ and $\ext^{1}(\mathcal{V}, \mathcal{F}),$ respectively, and one can compute their Yoneda product. If this product is zero then one can apply proposition \textbf{\ref{King223}} to construct a display of a monad. So we shall prove that the Yoneda product
\begin{equation}\label{pairing}
\ext^{1}(\mathcal{V}, \mathcal{F})\times\ext^{1}(\mathcal{F}, \mathcal{U})\longrightarrow\ext^{2}(\mathcal{V}, \mathcal{U})
\end{equation}
is zero.

As for the sheaf $\mathcal{H}om_{\tilde{q}}(\mathcal{V}, \mathcal{U})$, one can prove that the sheaves $\mathcal{H}om_{\tilde{q}}(\mathcal{F}, \mathcal{U})$ and $\mathcal{H}om_{\tilde{q}}(\mathcal{V}, \mathcal{F})$ vanish identically by computing their associated fibers at a given point $s\in S.$ By the spectral sequence in proposition \textbf{\ref{loctoglob}}, it follows that $$\ext^{1}(\mathcal{F}, \mathcal{U})=\h^{0}(S,\mathcal{E}xt^{1}_{\tilde{q}}(\mathcal{F}, \mathcal{U}))\quad\textrm{ and }\quad\ext^{1}(\mathcal{V}, \mathcal{F})=\h^{0}(S, \mathcal{E}xt^{1}_{\tilde{q}}(\mathcal{V}, \mathcal{F})),$$
and the pairing \eqref{pairing} can be written as

$$\h^{0}(S,\mathcal{E}xt^{1}_{\tilde{q}}(\mathcal{V}, \mathcal{F})\times\mathcal{E}xt^{1}_{\tilde{q}}(\mathcal{F}, \mathcal{U}))\longrightarrow \ext^{2}(\mathcal{V}, \mathcal{U}).$$

On the other hand this product is inherited from the relative local version of the Yoneda product \cite[Section \textbf{10.1.7}]{Huy}
$$\mathcal{E}xt^{1}_{\tilde{q}}(\mathcal{V}, \mathcal{F})\times\mathcal{E}xt^{1}_{\tilde{q}}(\mathcal{F}, \mathcal{U})\longrightarrow \mathcal{E}xt^{2}_{\tilde{q}}(\mathcal{V}, \mathcal{U})$$
Thus the global Yoneda product takes values in the $\h^{0}(S,\mathcal{E}xt^{2}_{\tilde{q}}(\mathcal{V}, \mathcal{U}))$ part of the group $\ext^{2}(\mathcal{V}, \mathcal{U})$. But we already proved that $\h^{0}(S,\mathcal{E}xt^{2}_{\tilde{q}}(\mathcal{V}, \mathcal{U}))=0$, consequently the Yoneda product of the extensions \eqref{A} and \eqref{B} is zero, implying the existence of sequences

\begin{align}
&0\longrightarrow\mathcal{U}\longrightarrow\mathcal{W}\longrightarrow\mathcal{Q}\longrightarrow0 \notag\\
&0\longrightarrow\mathcal{K}\longrightarrow\mathcal{W}\longrightarrow\mathcal{V}\longrightarrow0 \notag
\end{align}
such that the diagram
\begin{equation}
\xymatrix@R-1pc@C-1pc{&0\ar[d]&0\ar[d]&& \\
&\mathcal{U}\ar[d]\ar@{=}[r]&\mathcal{U}\ar[d]&& \\
0\ar[r]&\mathcal{K}\ar[d]\ar[r]&\mathcal{W}\ar[d]\ar[r]&\mathcal{V}\ar@{=}[d]\ar[r]&0 \\
0\ar[r]&\mathcal{F}\ar[d]\ar[r]&\mathcal{Q}\ar[d]\ar[r]&\mathcal{V}\ar[r]&0\\
&0&0&&
}
\end{equation}
commutes.
\end{proof}

The display existence shows that there exists a monad
\begin{equation}\label{S-monad}
\mathcal{M}: \quad\oplus_{i=0}^{n}\mathcal{O}_{\tilde{\mathbb{P}}}(-1,E_{i})\boxtimes\mathcal{U}_{i}\longrightarrow \mathcal{W} \longrightarrow\oplus_{i=0}^{n}\mathcal{O}_{\tilde{\mathbb{P}}}(1,-E_{i})\boxtimes\mathcal{V}_{i}
\end{equation}
associated to the family $\mathcal{F}$ on $\tilde{\mathbb{P}}$ parameterized by $S$. One can show that the restriction to the fibers of $\tilde{q}$ gives a monad isomorphic to the one in Proposition \textbf{2.5} and, using the display, one can show that the second term $\mathcal{W}$ of the monad $\mathcal{M}$ is trivial along the fibers of $\tilde{q}.$

\bigskip

Another useful monad we now introduce is the universal monad on $\tilde{\mathbb{P}}\times P$ :
\begin{equation}\label{univ-monad}
\xymatrix@C-1pc{\mathbb{M}:& \oplus_{i=0}^{n}\mathcal{O}_{\tilde{\mathbb{P}}}(-1,E_{i})\boxtimes K_{i}\otimes\mathcal{O}_{P}\ar[r]&  \mathcal{O}_{\tilde{\mathbb{P}}}\boxtimes W\otimes\mathcal{O}_{P}\ar[r]& \oplus_{i=0}^{n}\mathcal{O}_{\tilde{\mathbb{P}}}(1,-E_{i})\boxtimes L_{i}\otimes\mathcal{O}_{P}}
\end{equation}
We denote its cohomology by $\mathfrak{F}.$ The vector spaces $K_{i}$ and $L_{i}$ are given by Proposition \textbf{\ref{prop}}. Note that at each point $p$ of the space $P,$ defined in \eqref{spaceP}, the maps $\alpha_{p}$ and $\beta_{p},$ as in \eqref{alpha} and \eqref{beta}, gives a monad $M(p)$ whose cohomology, $\mathcal{E}(p),$ is a framed torsion-free sheaf. By varying the point $p$ in on an open neighborhood $U\subset P,$ the maps $\alpha$ and $\beta$ will depend on the sheaf $\mathcal{O}_{U}.$ The action of the group $G,$ defined in \eqref{P and G}, provides the gluing of the maps $\alpha$ and $\beta$ in the intersection of two open sets. This allows the construction of the canonical monad \eqref{univ-monad} on $\tilde{\mathbb{P}}\times P$. By construction, the Chern character of cohomology $\mathfrak{F}\otimes k(p)\cong\mathcal{E}(p)$ is independent of the point $p\in P.$ Hence, by the Riemann-Roch formula, the Hilbert polynomial is independent from $p.$ Thus $\mathfrak{F}$ is flat over $P.$

\bigskip

\section{Smoothness of the space $P$}

A needed step, in proving smoothness of the moduli space, is to show the smoothness of the space $P$ of ADHM type configurations subject to the monad condition and defined in \eqref{spaceP}. In this section, we follow the argument given by Okonek et. al. in \cite[Chapter\textbf{II}, \textbf{4.1}]{Okonek}, provided that one makes suitable changes in order for the proofs to work in the coherent case. We need first to prove the following
\begin{thm}\label{Kuneth}
Let $L_{\bullet}$ and $M^{\bullet}$ be two bounded complexes of coherent sheaves of $\mathcal{O}_{X}$-modules over an algebraic variety $X$, and suppose that $L_{i}$ is locally-free or $M^{i}$ is injective, for every $i$ in the complexes. Then there is a spectral sequence $E_{r}^{pq}$ with second term $$E_{2}^{pq}=\oplus_{i+j=q}\mathcal{E}xt^{p}(H_{i}(L_{\bullet}),H^{j}(M^{\bullet}))$$ for which the $E_{\infty}$-term is the bi-graded group associated to a suitable filtration of the cohomology of the complex $\mathcal{H}om(L_{\bullet},M^{\bullet}).$
\end{thm}

\begin{proof}
The proof is similar to \cite[Theorem \textbf{5.4.1}]{Godement}, replacing projectives by locally-free.
\end{proof}

\begin{lem}\label{obstruction}
On a non-singular algebraic surface $X,$ let $\xymatrix{M:0\ar[r]&A\ar[r]^{a_{0}}&B\ar[r]^{b_{0}}&C\ar[r]&0}$ be a monad whose cohomology is a torsion-free sheaf $\mathcal{E}.$ Let $d_{0}$ be the following homomorphism $$\xymatrix@R-1pc@C-1pc{d_{0}:&\Hom(A,B)\oplus\Hom(B,C)\ar[r]&\Hom(A,C)\\ &(a,b)\ar[r]&d_{0}(a,b)=ba_{0}+b_{0}a}.$$ Then the map $$\h^{2}(X, \mathcal{E}nd(\mathcal{E}))\longrightarrow Coker d_{0}$$ is surjective if the following groups vanish:
$$\xymatrix@R-1.6pc@C-2pc{\h^{1}(X,\mathcal{H}om(B,C)),&\h^{1}(X,\mathcal{H}om(A,B)),&\h^{1}(X,\mathcal{E}nd(B)),&\h^{1}(X,\mathcal{E}nd(C))\\
\h^{1}(X,\mathcal{E}nd(A))\quad,&\h^{2}(X,\mathcal{E}nd(A))\quad,&\h^{2}(X,\mathcal{E}nd(B))\quad,&\h^{2}(X,\mathcal{E}nd(C))\\
\h^{2}(X,\mathcal{H}om(B,A)),&\h^{2}(X,\mathcal{H}om(C,B))&&}$$
\end{lem}

\begin{proof}
The proof is a generalization of \cite[Chapter \textbf{II}, \textbf{4.1}]{Okonek} replacing tesor product in the case of bundles by suitable sheaves of local homomorphisms in the coherent case. Consider the double complex
\begin{displaymath}
\xymatrix@R-1.4pc@C-1pc{\mathcal{H}om(C,A)\ar[r]\ar[d]&\mathcal{H}om(C,B)\ar[r]\ar[d]&\mathcal{E}nd(C)\ar[d]\\
\mathcal{H}om(B,A)\ar[r]\ar[d]&\mathcal{E}nd(B)\ar[r]\ar[d]&\mathcal{H}om(B,C)\ar[d] \\
\mathcal{E}nd(A)\ar[r]&\mathcal{H}om(A,B)\ar[r]&\mathcal{H}om(A,C)
}
\end{displaymath}
The associated total complex is given by:
\begin{displaymath}
\xymatrix@C-1pc{K^{\bullet}:&0\ar[r]&K^{-2}\ar[r]&K^{-1}\ar[r]^S&K^{0}\ar[r]^T&K^{1}\ar[r]^U&K^{2}\ar[r]&0}
\end{displaymath}
where
\begin{displaymath}
\xymatrix@R-2pc@C-2pc{K^{-2}=\mathcal{H}om(C,A),& K^{-1}=\mathcal{H}om(C,B)\oplus\mathcal{H}om(B,A),\\
K^{0}=\mathcal{E}nd(A)\oplus\mathcal{E}nd(B)\oplus\mathcal{E}nd(C), &K^{1}=\mathcal{H}om(A,B)\oplus\mathcal{H}om(B,C), \\
K^{2}=\mathcal{H}om(A,C).&
}
\end{displaymath}

Using theorem \textbf{\ref{Kuneth}} one has $$E_{2}^{pq}=\oplus_{i+j=q}\mathcal{E}xt^{p}(\h_{i}(L_{\bullet}),\h^{j}(M^{\bullet}))\Longrightarrow E_{\infty}^{p+q}=\h^{p+q}(\mathcal{H}om(L_{\bullet},M^{\bullet})).$$ This allows one to compute the cohomology of the complex $K^{\bullet}$:
\begin{equation}
\xymatrix@R-1.6pc@C-1pc{\h^{-2}(K^{\bullet})=0,&\h^{-1}(K^{\bullet})=0,&\h^{2}(K^{\bullet})=0, \\
\h^{0}(K^{\bullet})=\mathcal{E}nd(\mathcal{E}),&\h^{1}(K^{\bullet})=\mathcal{E}xt^{1}(\mathcal{E},\mathcal{E}).&
}
\end{equation}
So one can write the following exact sequences:
\begin{align}
&\xymatrix@C-1pc{0\ar[r]&ImS\ar[r]&KerT\ar[r]&\mathcal{E}nd(\mathcal{E})\ar[r]&0}\label{S1} \\
&\xymatrix@C-1pc{0\ar[r]&KerT\ar[r]&K^{0}\ar[r]&ImT\ar[r]&0}\label{S2} \\
&\xymatrix@C-1pc{0\ar[r]&K^{-2}\ar[r]&K^{-1}\ar[r]&ImS\ar[r]&0}\label{S3} \\
&\xymatrix@C-1pc{0\ar[r]&KerU\ar[r]&K^{1}\ar[r]&K^{2}\ar[r]&0}\label{S4} \\
&\xymatrix@C-1pc{0\ar[r]&ImT\ar[r]&KerU\ar[r]& \mathcal{E}xt^{1}(\mathcal{E},\mathcal{E})\ar[r]&0}\label{S5}
\end{align}

Now $d_{0}$ in the lemma is the map $d_{0}:\h^{0}(X,K^{1})\longrightarrow\h^{0}(X,K^{2})$ where
\begin{displaymath}
\xymatrix@C-1pc{\h^{0}(X,K^{1})=\h^{0}(X,\mathcal{H}om(A,B)\oplus\mathcal{H}om(B,C)),&\h^{0}(X,K^{2})=\mathcal{H}om(A,C).}
\end{displaymath}
From \eqref{S1}, \eqref{S2}, \eqref{S3} and the vanishing of the cohomology groups in the assumptions of the lemma, one has
\begin{align}
\h^{2}(X,\mathcal{E}nd(\mathcal{E}))&=\h^{2}(X,KerT)\notag \\
&=\h^{1}(X,ImT).\notag
\end{align}
From \eqref{S4} one has $Cokerd_{0}=\h^{1}(X,KerU).$ Combining all this with the long sequence in cohomology induced by \eqref{S5} one has
\begin{equation}
\cdots\longrightarrow\h^{2}(X,\mathcal{E}nd(\mathcal{E}))\longrightarrow Cokerd_{0}\longrightarrow\h^{1}(X,\mathcal{E}xt^{1}(\mathcal{E},\mathcal{E}))
\end{equation}

To prove that $\h^{1}(X,\mathcal{E}xt^{1}(\mathcal{E},\mathcal{E}))=0$ one uses first the natural sequence \eqref{doubledual}. Applying $\mathcal{H}om(\bullet,\mathcal{E})$ to \eqref{doubledual} it follows that $\mathcal{E}xt^{1}(\mathcal{E},\mathcal{E})=\mathcal{E}xt^{2}(\Delta,\mathcal{E})$ and $\mathcal{E}xt^{2}(\mathcal{E},\mathcal{E})=0$

The restriction to any open subset $V\subset X$ gives $\mathcal{E}xt^{2}(\Delta,\mathcal{E})|_{V}=\ext^{2}(\Delta|_{V},\mathcal{E}|_{V})$ but $\Delta|_{V}=0$ for $Supp\Delta\cap V=\emptyset.$ Hence $$\mathcal{E}xt^{2}(\Delta,\mathcal{E})|_{V}=\left\{\begin{array}{ll}\ext^{2}(\Delta|{V},\mathcal{E}|_{V}) & Supp\Delta\cap V\neq\emptyset.\\ 0 &Supp\Delta\cap V=\emptyset.\end{array}\right.$$
It follows that $\mathcal{E}xt^{2}(\Delta,\mathcal{E})$ is also supported on points, namely $Supp\Delta.$ Thus $\mathcal{E}xt^{1}(\mathcal{E},\mathcal{E})$ is supported on $\Delta.$ Consequently $\h^{1}(X,\mathcal{E}xt^{1}(\mathcal{E},\mathcal{E}))=0.$
\end{proof}

\bigskip

\begin{pr}
The space $P,$ of ADHM type configurations, is smooth.
\end{pr}

\begin{proof}
Consider the following monad $$\xymatrix{M:&0\ar[r]&\oplus_{i=0}^{n}K_{i}(-1,E_{i})\ar[r]^{\quad\quad\alpha}& W\ar[r]^{\beta\quad\quad}& \oplus_{i=0}^{n}L_{i}(1,-E_{i})\ar[r]&0}$$ on $\tilde{\mathbb{P}}$ and the mapping
\begin{displaymath}
\xymatrix@R-1pc@C-1pc{g:&\mathbb{H}\oplus\mathbb{F}\ar[r]&\mathbb{G} \\
&(\alpha,\beta)\ar[r]&\beta\circ\alpha
}
\end{displaymath}

The spaces $\mathbb{H}$, $\mathbb{F}$ and $\mathbb{G}$ are defined by
$$\left\{\begin{array}{l}\mathbb{H}=\oplus_{i=0}^{n}\Hom(V_{i}, \Hom(K_{i},W))\\ \mathbb{F}=\oplus_{i=0}^{n}\Hom(V_{i}, \Hom(W,L_{i})) \\ \mathbb{G}=\oplus_{i,j=0}^{n}\Hom(V_{ij}, \Hom(K_{i}),L_{j}))\end{array}\right.$$ The spaces $V_{i}$ and $V_{ij}$ are defined so that $V_{i}^{\ast}=\h^{0}(\tilde{\mathbb{P}},\mathcal{O}(1,-E_{i})),$ $V_{ij}^{\ast}=\h^{0}(\tilde{\mathbb{P}},\mathcal{O}(2,-E_{i}-E_{j})).$
The differential of $g$ at a point $(\alpha_{0},\beta_{0})$ is $$dg_{(\alpha_{0},\beta_{0})}=\beta_{0}\circ\alpha+\beta\circ\alpha_{0}.$$ Since $P\subset g^{-1}(0)$ is a Zariski-open subset, it suffices to show that $dg_{(\alpha_{0},\beta_{0})}$ is surjective for any point $(\alpha_{0},\beta_{0}).$ Using lemma \textbf{\ref{obstruction}} and verifying the vanishing of its hypothesis in our case, it follows that $$\h^{2}(\tilde{\mathbb{P}},\mathcal{E}nd(\mathcal{E}))\longrightarrow Coker(dg_{(\alpha_{0},\beta_{0})})\longrightarrow0$$ is exact. Finally we show that $\h^{2}(\tilde{\mathbb{P}},\mathcal{E}nd(\mathcal{E}))=0:$ from the local to global spectral sequence \cite[Section \textbf{II.7.4}]{Godement} one has
\begin{displaymath}
E_{2}^{pq}=\h^{p}(\tilde{\mathbb{P}},\mathcal{E}xt^{q}(\mathcal{E},\mathcal{E}))\Longrightarrow\ext^{p+q}(\mathcal{E},\mathcal{E})
\end{displaymath}
and since $\h^{1}(\tilde{\mathbb{P}},\mathcal{E}xt^{1}(\mathcal{E},\mathcal{E}))=0$ and $\mathcal{E}xt^{2}(\mathcal{E},\mathcal{E})$ is the zero sheaf, then we have $\h^{2}(\mathcal{E}nd(\mathcal{E},\mathcal{E}))=\ext^{2}(\mathcal{E},\mathcal{E}).$ On the other hand $\ext^{2}(\mathcal{E},\mathcal{E})=\Hom(\mathcal{E},\mathcal{E}(-3,\overrightarrow{1}))^{\ast}.$ By using
\begin{displaymath}
\xymatrix@C-1pc{0\ar[r]&\mathcal{E}(-3,\overrightarrow{1})\ar[r]&\mathcal{E}(-2,\overrightarrow{1})\ar[r]&\mathcal{E}|_{l_{\infty}}(-2)\ar[r]&0}
\end{displaymath}
it follows
\begin{displaymath}
\xymatrix@C-1pc{0\ar[r]&\Hom(\mathcal{E},\mathcal{E}(-3,\overrightarrow{1}))\ar[r]&\Hom(\mathcal{E},\mathcal{E}(-2,\overrightarrow{1})) \ar[r]&\Hom(\mathcal{E},\mathcal{E}|_{l_{\infty}}(-2))\ar[r]&0}
\end{displaymath}
But $\Hom(\mathcal{E},\mathcal{E}(-2,\overrightarrow{1}))=\ext^{2}(\mathcal{E},\mathcal{E}(-1,0))^{\ast}$ which also vanish because of the following; twisting the sequence by $\mathcal{O}(-1,0)$, and applying the functor $\Hom(\mathcal{E}^{\ast\ast},\bullet)$ one gets the long exact sequence:
$$\longrightarrow \ext^{1}(\mathcal{E}^{\ast\ast},\Delta))\longrightarrow \ext^{2}(\mathcal{E}^{\ast\ast},\mathcal{E}(-1,0))\longrightarrow \ext^{2}(\mathcal{E}^{\ast\ast},\mathcal{E}^{\ast\ast}(-1,0))\longrightarrow0.$$
But $\ext^{1}(\mathcal{E}^{\ast\ast},\Delta))\cong\h^{1}(\Sigma,\Delta))^{\oplus r}$ vanishes since $Supp(\Delta)$ is zero dimensional. $\ext^{2}(\mathcal{E}^{\ast\ast},\mathcal{E}^{\ast\ast}(-1,0))$ $=\h^{2}(\tilde{\mathbb{P}}, \mathcal{E}^{\ast}\otimes\mathcal{E}^{\ast\ast}(-1,0))$ vanishes since for a framed sheaf $\mathcal{F}$,  $\h^{2}(\tilde{\mathbb{P}}, \mathcal{F}(-1,\vec{q})=0 \quad\forall \vec{q}$. Hence $\ext^{2}(\mathcal{E}^{\ast\ast},\mathcal{E}(-1,0))=0$.

Applying again the functor $\Hom(\bullet,\mathcal{E}(-1,0))$ on the sequence \eqref{doubledual}, one has
$$\longrightarrow \ext^{2}(\mathcal{E}^{\ast\ast},\mathcal{E}(-1,0))\longrightarrow \ext^{2}(\mathcal{E},\mathcal{E}(-1,0))\longrightarrow0.$$

Thus $\ext^{2}(\mathcal{E},\mathcal{E}(-1,0))=0$

\end{proof}

\bigskip

\section{Smoothness of the moduli space $\mathcal{M}^{\tilde{\mathbb{P}}}_{\vec{a}, k}$}
In this section we will prove the smoothness of the moduli space $\mathcal{M}^{\tilde{\mathbb{P}}}_{\vec{a}, k}=P/G$ where $P$ is the space of the ADHM data $\rho:=(a,q^{A},c,d)$ and $G$ is the symmetry group acting on this data as described in section \textbf{\ref{ADHM}}.
First let $\mathcal{E}$ and $\mathcal{E}'$ be two framed torsion-free sheaves with the same fixed Chern class, and let $\alpha:\mathcal{E}\longrightarrow\mathcal{E}'$ be a morphism preserving the framing up to a homothety, i.e., the diagram
\begin{equation}
\xymatrix@C-0.5pc@R-0.5pc{\mathcal{E}\ar[r]\ar[d]_{\alpha}&\mathcal{E}|_{l_{\infty}}\ar[r]^{\Phi}\ar[d]_{\alpha|_{l_{\infty}}}& \mathcal{O}|_{l_{\infty}}^{\oplus r}\ar[d]^{\lambda}\\
\mathcal{E}' \ar[r]&\mathcal{E}'|_{l_{\infty}}\ar[r]_{\Phi'}&\mathcal{O}|_{l_{\infty}}^{\oplus r}
}
\end{equation}
commutes. $\lambda\in\mathcal{O}^{\ast}|_{l_{\infty}}$ and $r$ is the rank of the sheaves $\mathcal{E}$ and $\mathcal{E}'$. Since $\Phi$ and $\Phi'$ are isomorphisms, one gets the following relation
\begin{equation}\label{relation}
\alpha|_{l_{\infty}}=\lambda\Phi'^{-1}\Phi.
\end{equation}
Since $Supp(|H|)$ is open dense in $\tilde{\mathbb{P}}$, and the fixed line $l_{\infty}$ is linearly equivalent to $H$, then the morphism $\alpha$ is completely determined by its restriction $\alpha|_{l_{\infty}}.$
Define $\Hom^{\Phi}(\mathcal{E},\mathcal{E}')$ to be the subgroup of $\Hom(\mathcal{E},\mathcal{E}')$ which contains the morphisms preserving the framing up to a homothety. $\Phi'^{-1}\Phi$ being a fixed element of $\End(\mathcal{O}|_{l_{\infty}}^{\oplus r})$ and $\lambda\in\mathcal{O}|_{l_{\infty}}$, the dimension of $\Hom^{\Phi}(\mathcal{E},\mathcal{E}')$ as a subspace of $\Hom(\mathcal{E},\mathcal{E}')$ is $1$.

Let us consider the universal monad on $\tilde{\mathbb{P}}\times P$ given by \eqref{univ-monad}; its cohomology is the $P$-flat family $\mathfrak{F}$. For every point $\rho\in P$, the fiber $\mathfrak{F}_{\rho}\cong\mathcal{E}(\rho)$ is a framed torsion-free sheaf. On $P\times P\times\tilde{\mathbb{P}}$ one has the following natural projections:
$$\xymatrix@R-0.5pc{P\times P\times\tilde{\mathbb{P}}\ar[d]_{pr_{12}}\ar@<1ex>[r]^{pr_{13}}\ar@<-1ex>[r]_{pr_{23}}& P\times\tilde{\mathbb{P}}\\
P\times P& }$$ where $pr_{12}$ projects on the first and the second factors, $pr_{13}$ projects on the first and the third factors and $pr_{23}$ projects on the second and the third factors.
Consider the sheaf $\mathcal{H}om(pr_{13}^{\ast}\mathfrak{F},pr_{23}^{\ast}\mathfrak{F})$ on $P\times P\times\tilde{\mathbb{P}}$.
The sheaves $pr_{23}^{\ast}\mathfrak{F}$ and $pr_{23}^{\ast}\mathfrak{F}$ are flat on $P\times P$, and then $\mathcal{H}om(pr_{13}^{\ast}\mathfrak{F},pr_{23}^{\ast}\mathfrak{F})$ is flat on $P\times P$.

Let us use the following notation: first we omit the pull-back symbols in order to avoid overloading text and formulas. Then we put $(\mathfrak{F}, \phi):=\xymatrix{\mathfrak{F}\ar[r]^\phi & \mathcal{O}^{\oplus r}_{l_{\infty}}}$ where the morphism $\phi$ is given by the triangle

$$\xymatrix@C-0.5pc@R-0.5pc{\mathfrak{F}\ar[r]^r \ar[dr]_\phi & \mathfrak{F}|_{l_{\infty}}\ar[d]^\Phi \\ & \mathcal{O}^{\oplus r}_{l_{\infty}} }$$ and define
\begin{displaymath}
\Hom((\mathfrak{F}, \phi),(\mathfrak{F}', \psi)):=\Bigg\{\begin{array}{ll}\alpha:\mathfrak{F}\longrightarrow\mathfrak{F}'& \\ \lambda\in\mathcal{O}_{l_{\infty}}& \end{array} {\Large\Bigg{|}} \small{\textrm{ such that }} \quad\begin{array}{l} \\ \xymatrix@R-0.5pc@C-0.5pc{\ar@{}[dr] |{\circlearrowright}\mathfrak{F}\ar[r]^\alpha \ar[d]^\phi & \mathfrak{F}'\ar[d]^\psi \\ \mathcal{O}^{\oplus r}_{l_{\infty}}\ar[r]^\lambda & \mathcal{O}^{\oplus r}_{l_{\infty}}} \\ \\ \end{array}\Bigg\}
\end{displaymath}

Now define the pre-sheaf $\mathcal{H}om((\mathfrak{F}, \phi),(\mathfrak{F}', \psi))$ given by
\begin{equation}\label{sheaf}
U\longrightarrow \Hom((\mathfrak{F}|_{U}, \phi|_{U}),(\mathfrak{F}'|_{U}, \psi|_{U})),
\end{equation}
i.e., to every open subset $U$ such that $U\cap l_{\infty}\neq\emptyset$ (for example $\tilde{\mathbb{P}}\backslash E_{i},$ where $E_{i}$ is one of the exceptional divisors for example) we associate the diagram
\begin{equation}
\xymatrix@C-0.5pc@R-0.5pc{\ar@{}[dr] |{\circlearrowright}\mathfrak{F}|_{U}\ar[r]^{\alpha|_{U}} \ar[d]_{\phi|_{U}} & \mathfrak{F}'|_{U}\ar[d]^{\psi|_{U}} \\ \mathcal{O}^{\oplus r}_{U\cap l_{\infty}}\ar[r]^\lambda & \mathcal{O}^{\oplus r}_{U\cap l_{\infty}}.}
\end{equation}
and to every open $U$ such that $U\cap l_{\infty}=\emptyset$ (for instance $\tilde{\mathbb{P}}\backslash l_{\infty}$) we just associate $\Hom(\mathfrak{F}|_{U},\mathfrak{F}'|_{U}).$ Moreover this is a sheaf; the sheaf axioms are inherited from the sheaf properties of $\mathfrak{F}$, $\mathfrak{F}'$ and the commutation of the square diagrams.
\begin{pr}
The sheaf $\mathcal{H}om((\mathfrak{F}, \phi),(\mathfrak{F}', \psi))$ is flat over $P\times P$
\end{pr}

\begin {proof}
Since the question is local on $P\times P,$ one can consider an open affine $W=\spec A\subset P\times P$ and work with $A$-modules, where $A$ is a commutative noetherian ring. Then one can show that $\mathcal{H}om((\mathfrak{F}, \phi),(\mathfrak{F}', \psi))$ is $A$-flat: let
\begin{equation}\label{modules}
0\longrightarrow M'\longrightarrow M\longrightarrow M''\longrightarrow0
\end{equation}
be an exact sequence of $A$-modules, and let $\mathfrak{F}$ be an $A$-flat family of framed torsion-free sheaves on $\tilde{\mathbb{P}}$. One has the short exact sequence $$0\longrightarrow\mathfrak{F}\otimes_{A}M'\longrightarrow\mathfrak{F}\otimes_{A}M\longrightarrow\mathfrak{F}\otimes_{A}M''\longrightarrow0.$$ The restriction of $\mathfrak{F}$ to any open $U$ is also an $A$-flat module, then one has the sequence $$0\longrightarrow\mathfrak{F}|_{U}\otimes_{A}M'\longrightarrow\mathfrak{F}|_{U}\otimes_{A}M\longrightarrow \mathfrak{F}|_{U}\otimes_{A}M''\longrightarrow0$$
The same situation is true for any other such family $\mathfrak{F}'$, and if we consider any morphism $\alpha|_{U}:\mathfrak{F}|_{U}\longrightarrow\mathfrak{F}'|_{U}$
in $\Hom((\mathfrak{F}|_{U}, \phi|_{U}),(\mathfrak{F}'|_{U}, \psi|_{U}))$, we get the following diagram:
{\scriptsize
\begin{displaymath}
\xymatrix@C-1.6pc@R-0.9pc{&0\ar[rr]&&\mathfrak{F}|_{U}\otimes_{A}M'\ar[rr]\ar@{->}'[d][ddd]|{\alpha|_{U}\otimes I_{M'}}\ar[dl]|{\phi|_{U}\otimes_{A}I_{M'}}&&\mathfrak{F}|_{U}\otimes_{A}M\ar[rr]\ar@{->}'[d][ddd]|{\alpha|_{U}\otimes I_{M}}\ar[dl]|{\phi|_{U}\otimes_{A}I_{M}}&&
\mathfrak{F}|_{U}\otimes_{A}M''\ar[rr]\ar@{->}'[d][ddd]|{\alpha|_{U}\otimes I_{M''}}\ar[dl]|{\phi|_{U}\otimes_{A}I_{M''}}&&0\\
0\ar[rr]&&\mathcal{O}^{\oplus r}_{U\cap l_{\infty}}\otimes_{A}M'\ar[rr]\ar@{-}[d]&&\mathcal{O}^{\oplus r}_{U\cap l_{\infty}}\otimes_{A}M\ar[rr]\ar@{-}[d]&&\mathcal{O}^{\oplus r}_{U\cap l_{\infty}}\otimes_{A}M''\ar[rr]\ar@{-}[d]&&0&\\
&&\lambda\otimes I_{M'}\ar[dd]&&\lambda\otimes I_{M}\ar[dd]&&\lambda\otimes I_{M''}\ar[dd]&&&\\
&0\ar@{->}'[r][rr]&&\mathfrak{F}'|_{U}\otimes_{A}M'\ar@{->}'[r][rr]\ar[dl]|{\psi|_{U}\otimes_{A}I_{M'}}&
&\mathfrak{F}'|_{U}\otimes_{A}M\ar@{->}'[r][rr]\ar[dl]|{\psi|_{U}\otimes_{A}I_{M}}&
&\mathfrak{F}'|_{U}\otimes_{A}M''\ar[rr]\ar[dl]|{\psi|_{U}\otimes_{A}I_{M''}}&&0\\
0\ar[rr]&&\mathcal{O}^{\oplus r}_{U\cap l_{\infty}}\otimes_{A}M'\ar[rr]&&\mathcal{O}^{\oplus r}_{U\cap l_{\infty}}\otimes_{A}M\ar[rr]&&\mathcal{O}^{\oplus r}_{U\cap l_{\infty}}\otimes_{A}M''\ar[rr]&&0&\\
}
\end{displaymath}
}

where $I_{M},$ $I_{M'}$ and $I_{M''}$ are the identity morphisms on $M$, $M'$ and $M''$ respectively.   

\bigskip

On the other hand, if we twist the sequence \eqref{modules} by the $A-$module $\Hom((\mathfrak{F}|_{U},\phi|_{U}),(\mathfrak{F}'|_{U},\psi|_{U}))$, then we get the sequence {\small

$$\Hom((\mathfrak{F}|_{U},\phi|_{U}),(\mathfrak{F}'|_{U},\psi|_{U}))\otimes_{A}M'\stackrel{\Xi}{
\longrightarrow} \Hom((\mathfrak{F}|_{U},\phi|_{U}),(\mathfrak{F}'|_{U},\psi|_{U}))\otimes_{A}M \qquad\qquad$$ $$\qquad\qquad\qquad\longrightarrow \Hom((\mathfrak{F}|_{U},\phi|_{U}),(\mathfrak{F}'|_{U},\psi|_{U}))\otimes_{A}M''\longrightarrow0.$$ }

\vspace{0.2cm}

\underline{{\bf Claim:}} The map $\Xi,$ in the sequence above, is injective.

\vspace{0.2cm}

If we denote the first map in \eqref{modules} by $\theta$, then $\Xi:\Sigma_{i}\alpha_{i}\otimes_{A}m_{i}\longrightarrow\Sigma_{i}\alpha_{i}\otimes_{A}\theta(m_{i})$ for all $\alpha_{i}$'s as in the three-dimensional commutative diagram above. By the commutativity of the following diagram
\begin{displaymath}
\xymatrix@C-0.6pc@R-0.6pc{0\ar[r]&\mathfrak{F}|_{U}\otimes_{A}M'\ar[r]\ar[d]_{\alpha_{i}\otimes_{A}I_{M'}}&\mathfrak{F}|_{U}\otimes_{A}M \ar[r]\ar[d]^{\alpha_{i}\otimes_{A}I_{M}}&\mathfrak{F}|_{U}\otimes_{A}M''\ar[r]\ar[d]&0 \\
0\ar[r]&\mathfrak{F}'|_{U}\otimes_{A}M'\ar[r]^{\tilde{\theta}}&\mathfrak{F}'|_{U}\otimes_{A}M\ar[r]&\mathfrak{F}'|_{U}\otimes_{A}M''\ar[r]&0
}
\end{displaymath}
it follows that if $\alpha_{i}\otimes_{A}I_{M}=0,$ then $\tilde{\theta}\circ\alpha_{i}\otimes_{A}I_{M'}=0.$ Moreover $\tilde{\theta}$ is injective, hence $\alpha_{i}\otimes_{A}I_{M'}=0$. Thus $\Xi$ is injective.
After gluing sections of the involved sheaves, one gets an exact sequence

{\small
$$\xymatrix@1{0\ar[r]&\mathcal{H}om((\mathfrak{F},\phi),(\mathfrak{F}',\psi))\otimes_{A}M'\ar[r]&
\mathcal{H}om((\mathfrak{F},\phi),(\mathfrak{F}',\psi))\otimes_{A}M&}$$
$$\xymatrix@1{\ar[r]&\mathcal{H}om((\mathfrak{F},\phi),(\mathfrak{F}',\psi))\otimes_{A}M''\ar[r]&0}$$}
Hence the sheaf $\mathcal{H}om((\mathfrak{F},\phi),(\mathfrak{F}',\psi))$ is flat over $P\times P$.

\end{proof}

\bigskip

Let us remark that since the group $G$ acts freely on the non-singular space $P$, with trivial stabilizer for any point $\rho\in P$, then the quotient is smooth if the graph of the group action is closed, that is, the image $\Gamma:=Im\gamma$ of the morphism $$\gamma:G\times P\longrightarrow P\times P$$ is closed \cite[Chapter \textbf{I}, Theorem \textbf{1}]{Sesha}. In our case the pair $(\rho,\sigma)$ is in the graph $\Gamma$ if and only if $dim \Hom^{\Phi}(\mathcal{E}(\rho),\mathcal{E}(\sigma))=1$, in other words
\begin{equation}
\Gamma =\left\{ (\rho,\sigma)\in P\times P \textbf{/}
h^{0}(pr_{12}^{-1}(\rho,\sigma),\mathcal{H}om^{\Phi}(pr_{13}^{\ast}\mathfrak{F}(\rho),pr_{23}^{\ast}\mathfrak{F}(\sigma)))>0 \right\}
\end{equation}
Since the sheaf $\mathcal{H}om^{\Phi}(pr_{13}^{\ast}\mathfrak{F},pr_{23}^{\ast}\mathfrak{F})$ is flat on $P\times P$, then it follows from the semi-continuity theorem that $\Gamma$ is a closed subscheme. Hence the quotient $P/G$ is smooth.

\bigskip

\section{Fineness of the Moduli space}

In this section we shall prove that $\mathcal{M}^{\tilde{\mathbb{P}}}_{\vec{a}, k}$ is a fine moduli space by using the monads obtained in section \textbf{\ref{unversal}}. We start by showing that it is a coarse moduli space. The construction of the following natural transformation $$\Phi: \mathfrak{M}^{\tilde{\mathbb{P}}}_{\vec{a}, k}(\bullet)\longrightarrow \Hom(\bullet,\mathcal{M}^{\tilde{\mathbb{P}}}_{\vec{a}, k});$$ goes as the following: for any classifying scheme $S$,  $\xi\in \mathfrak{M}^{\tilde{\mathbb{P}}}_{\vec{a}, k}(S)$, and a family $\mathcal{F}$ of torsion-free sheaves on $\tilde{\mathbb{P}}$, with Chern character $r+\Sigma_{i=1}^{n}a_{i}E_{i}-(k-\frac{|\vec{a}|^{2}}{2})\omega$, classified by $S$, one has a monad given as in \eqref{S-monad} which is canonically associated to $\mathcal{F}$. If we consider an open covering $\{S_{j}\}_{j\in J}$ of $S$, then on every open affine $S_{j}$ the restriction $\mathcal{M}|_{S_{j}}$ is isomorphic to a monad of the form:
$$\xymatrix@C-0.8pc{\mathbb{M}(\alpha_{j},\beta_{j}):& \oplus_{i=0}^{n}\mathcal{O}_{\tilde{\mathbb{P}}}(-1,E_{i})\boxtimes K_{i}\otimes\mathcal{O}_{S_{j}}\ar[r]^{\quad\quad\quad\alpha_{j}}&  \mathcal{O}_{\tilde{\mathbb{P}}}\boxtimes W\otimes\mathcal{O}_{S_{j}}\ar[r]^{\beta_{j}\quad\quad\quad}& \oplus_{i=0}^{n}\mathcal{O}_{\tilde{\mathbb{P}}}(1,-E_{i})\boxtimes L_{i}\otimes\mathcal{O}_{S_{j}}}$$
where $$\alpha_{j}:S_{j}\longrightarrow \mathbb{H}, \quad\quad\quad\beta_{j}:S_{j}\longrightarrow \mathbb{F}.$$ The spaces $\mathbb{H}$, $\mathbb{F}$ are defined by
$$\left\{\begin{array}{l}\mathbb{H}=\oplus_{i=0}^{n}\Hom(V_{i}, \Hom(K_{i},W))\\ \mathbb{F}=\oplus_{i=0}^{n}\Hom(V_{i}, \Hom(W,L_{i}))\end{array}\right.$$ and $V_{i}$ is such that $V_{i}^{\ast}=\h^{0}(\tilde{\mathbb{P}},\mathcal{O}(1,-E_{i})).$ From the monad condition, $\beta_{i}\circ\alpha_{i}=0,$ we have regular maps $f_{j}=(\alpha_{j},\beta_{j}):S_{j}\longrightarrow P.$ By construction these morphisms satisfy $$f_{i}(s)\sim_{G} f_{j}(s)$$ for $s$ in the intersection $S_{i}\cap S_{j}$, where $G$ is the group defined in \eqref{group-form} with the action given by \eqref{P and G}. The maps $f_{j}$ glue to form a global morphism $$f:S\longrightarrow \mathcal{M}^{\tilde{\mathbb{P}}}_{\vec{a}, k}.$$
This defines the desired natural transformation:
\begin{equation}\label{natural-transform}
\xymatrix@R-1pc{\Phi:& \mathfrak{M}^{\tilde{\mathbb{P}}}_{\vec{a}, k}(\bullet)\ar[r]& \Hom(\bullet,\mathcal{M}^{\tilde{\mathbb{P}}}_{\vec{a}, k})}
\end{equation}
by the association $\xymatrix{\xi\ar[r]&\Phi(\xi):=f}$ for every class $\xi=[\mathcal{F}]$ on a given scheme $S.$ Obviously $f$ depends only on the class $\xi=[\mathcal{F}].$ By using the monad, on $\tilde{\mathbb{P}},$ associated to a closed point $s\in S$ (obtained by restricting the monad $\mathcal{M}$ to the point $s\in S$) one deduces that $\Phi: \mathfrak{M}^{\tilde{\mathbb{P}}}_{\vec{a}, k}(\spec k(s))\longrightarrow \Hom(\spec k(s),\mathcal{M}^{\tilde{\mathbb{P}}}_{\vec{a}, k})$ is a bijection.
Now let $\mathcal{R}$ be another parameterizing scheme such that there is a natural transformation
$$\Psi: \mathfrak{M}^{\tilde{\mathbb{P}}}_{\vec{a}, k}(\bullet)\longrightarrow \Hom(\bullet,\mathcal{R}).$$

If $\mathfrak{F}$ is a universal family on $\tilde{\mathbb{P}}\times P$ parameterized by $P$ such that $\Phi(\mathfrak{F})=\pi:P\longrightarrow \mathcal{M}^{\tilde{\mathbb{P}}}_{\vec{a}, k}$, then, for the natural transformation $\Psi$, we have $\Psi(\mathfrak{F}):P\longrightarrow \mathcal{R}$.

\begin{pr}
$\Psi(\mathfrak{F})$ is constant along the fibers of the projection $\pi:P\longrightarrow \mathcal{M}^{\tilde{\mathbb{P}}}_{\vec{a}, k}.$
\end{pr}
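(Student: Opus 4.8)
The plan is to show that $\Psi(\eta)\colon P\to\mathcal{R}$ is $G$-invariant, which is precisely what is meant by being constant along the fibres of $\pi$: since $G$ acts freely on $P$ and $\mathcal{M}^{\tilde{\mathbb{P}}}_{\vec{a},k}=P/G$ is the (geometric) quotient, the fibres of $\pi$ are exactly the $G$-orbits. Write $t_g\colon P\to P$ for the automorphism induced by $g\in G$. It suffices to prove that $\Psi(\eta)\circ t_g=\Psi(\eta)$ for every $g\in G$. Applying the naturality of the transformation $\Psi$ to the morphism $t_g\colon P\to P$ gives $\Psi(\eta)\circ t_g=\Psi\bigl((\mathrm{id}_{\tilde{\mathbb{P}}}\times t_g)^{\ast}\eta\bigr)$, so the whole statement reduces to the claim that $(\mathrm{id}_{\tilde{\mathbb{P}}}\times t_g)^{\ast}\eta$ and $\eta$ define the same class in $\mathfrak{M}^{\tilde{\mathbb{P}}}_{\vec{a},k}(P)$, i.e. are isomorphic as $P$-flat families on $\tilde{\mathbb{P}}\times P$.

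To prove this claim I would use that $\eta$ is the cohomology of the universal monad $\mathbb{M}$ of \eqref{univ-monad}, whose matrix entries are the tautological ADHM data on $P$. Pulling $\mathbb{M}$ back along $t_g$ replaces those data by their images under the $G$-action of Section~3; but by the very construction of that action the transformed configuration is carried onto the original one by the explicit change of basis $(g,h)$ on $\oplus_{i=0}^{n}L_{i}$ and $\oplus_{i=0}^{n}K_{i}$, together with the accompanying transformation $g_{W}$ of the middle term $W$ used in the reduction. Since $g_{W}$ is given by a polynomial expression in the data, these base changes assemble into an isomorphism of monads $(\mathrm{id}_{\tilde{\mathbb{P}}}\times t_g)^{\ast}\mathbb{M}\xrightarrow{\sim}\mathbb{M}$ over $\tilde{\mathbb{P}}\times P$; in other words $\mathbb{M}$ carries a natural $G$-linearization covering the $G$-action on $P$. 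Taking cohomology, which commutes with the flat base change $t_g$ and with isomorphisms, yields $(\mathrm{id}_{\tilde{\mathbb{P}}}\times t_g)^{\ast}\eta\cong\eta$, as needed.

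Putting the two steps together, $\Psi(\eta)\circ t_g=\Psi(\eta)$ for all $g\in G$, so $\Psi(\eta)$ is constant on each $G$-orbit, i.e. along the fibres of $\pi$. The delicate point is exactly the passage from the obvious fibrewise isomorphisms $\mathcal{E}(\rho)\cong\mathcal{E}(g\rho)$ (immediate from the definition of $G$) to a genuine isomorphism of families over $\tilde{\mathbb{P}}\times P$; this is handled above by the $G$-linearization of $\mathbb{M}$, and it can alternatively be deduced from the isomorphism $H\colon Hom(M,M')\xrightarrow{\sim}Hom(\mathcal{E},\mathcal{E}')$ recorded earlier, which forces the fibrewise framed isomorphisms to lift uniquely, hence to glue, to a monad isomorphism over $\tilde{\mathbb{P}}\times P$.
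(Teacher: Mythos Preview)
Your argument is correct but follows a genuinely different route from the paper's. The paper works pointwise: given two closed points $\rho_{1},\rho_{2}\in P$ with $\pi(\rho_{1})=\pi(\rho_{2})$, it uses naturality of both $\Phi$ and $\Psi$ along the inclusions $\rho_{i}\colon\operatorname{spec}k(p)\to P$, together with the already established bijectivity of $\Phi$ on reduced points, to conclude $\rho_{1}^{\ast}\eta=\rho_{2}^{\ast}\eta$ in $\mathfrak{M}^{\tilde{\mathbb{P}}}_{\vec{a},k}(\operatorname{spec}k(p))$ and hence $\Psi(\eta)(\rho_{1})=\Psi(\eta)(\rho_{2})$. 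No $G$-linearization, and no global family isomorphism, is invoked.

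Your approach instead proves the stronger global statement $(\mathrm{id}_{\tilde{\mathbb{P}}}\times t_{g})^{\ast}\eta\cong\eta$ over all of $\tilde{\mathbb{P}}\times P$ by exhibiting an explicit $G$-linearization of the universal monad $\mathbb{M}$, and then applies naturality of $\Psi$ once, to $t_{g}$. This buys you more: the $G$-linearization of $\eta$ is exactly what the paper needs a few lines later to form the quotient family $\mathfrak{U}=\mathfrak{F}/G$ on $\tilde{\mathbb{P}}\times\mathcal{M}^{\tilde{\mathbb{P}}}_{\vec{a},k}$, so you have effectively folded that step into the present proof. The paper's argument, by contrast, is softer and shorter---it only needs the pointwise bijection of $\Phi$, which was already on the shelf---but it yields only pointwise constancy, relying on reducedness of $P$ (and of the quotient) to upgrade this to the scheme-theoretic statement needed for gluing the local maps $\bar{\phi}$.
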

\begin{proof}
Let $p=\spec k(p)\in P$ and let $\rho_{1}$, $\rho_{2}\in \Hom(p,P)$ such that $\pi(\rho_{1})=\pi(\rho_{2})$; if we consider the pull-back
\begin{equation}
\xymatrix@C-0.4pc@R-0.5pc{ &\rho_{i}^{\ast}\mathfrak{F}\ar[r]\ar[d]& \mathfrak{F}\ar[d]\\
&\tilde{\mathbb{P}}\times p\ar[r]^{(Id_{\tilde{\mathbb{P}}}\times \rho_{i})}& \tilde{\mathbb{P}}\times P
}
\end{equation}
then $\Phi(\rho_{1}^{\ast}\mathfrak{F})=\Phi(\mathfrak{F})(\rho_{1})$, and by definition $\Phi(\mathfrak{F})(\rho_{1})=\pi(\rho_{1})$. By assumption, we have $\pi(\rho_{1})=\pi(\rho_{2})=\Phi(\mathfrak{F})(\rho_{2})=\Phi(\rho_{2}^{\ast}\mathfrak{F})$, and since the natural transformation $\Phi$ is a bijection for every closed point, it follows that $$\rho_{1}^{\ast}\mathfrak{F}=\rho_{2}^{\ast}\mathfrak{F}$$
On the other hand we have $$\Psi(\mathfrak{F})(\rho_{1})=\Psi(\rho_{1}^{\ast}\mathfrak{F})=\Psi(\rho_{2}^{\ast}\mathfrak{F})=\Psi(\mathfrak{F})(\rho_{2}).$$ Thus $\Psi(\mathfrak{F})$ is constant along the fibers of $\pi:P\longrightarrow \mathcal{M}^{\tilde{\mathbb{P}}}_{\vec{a}, k}$
\end{proof}
The projection $\pi:P\longrightarrow \mathcal{M}^{\tilde{\mathbb{P}}}_{\vec{a}, k}$ locally has sections, so one can construct local mappings $\bar{\phi}:\mathcal{M}^{\tilde{\mathbb{P}}}_{\vec{a}, k}\longrightarrow \mathcal{R}$, but since $\Psi(\mathfrak{F})$ is constant along the fibers of $\pi$, then the map $\bar{\phi}$ can be lifted to a global map $\phi$ such that the following diagram commutes:
\begin{equation}
\xymatrix@C-0.5pc@R-0.5pc{P \ar[r]^{\Psi(\mathfrak{F})}\ar[d]_{\Phi(\mathfrak{F})=\pi}& \mathcal{R}\\
\mathcal{M}^{\tilde{\mathbb{P}}}_{\vec{a}, k}\ar[ru]_{\phi}&
}
\end{equation}

Now for any parameterizing scheme $S$ and a family $\xi$ on $\tilde{\mathbb{P}}\times S$, one has $\Psi(\xi):S\longrightarrow\mathcal{R}$. Let $\{S_{i}\}_{i\in I}$ be an open cover of $S$. The diagram
\begin{displaymath}
\xymatrix@C-0.5pc@R-0.5pc{S \ar[dr]\ar@/^/@{.>}[drr]^{\Psi(\xi)}&& \\
&P\ar[r]& \mathcal{R}\\
S_{i}\ar@{^{(}->}[uu]^{g_{i}}\ar@{-->}[ur]\ar[urr]&&
}
\end{displaymath}
commutes, and we have $\xi|_{S_{i}}=g_{i}^{\ast}(\mathfrak{F})$. Consequently
\begin{align}
\Psi(\xi)|_{S_{i}}&=\Psi(\xi|_{S_{i}})\notag \\
&=\Psi(g_{i}^{\ast}(\mathfrak{F})) \notag \\
&=g_{i}^{\ast}\Psi(\mathfrak{F}) \notag
\end{align}
On the other hand $\Psi(\mathfrak{F})=\phi\circ\Phi(\mathfrak{F})$, hence
\begin{align}
\Psi(\xi)|_{S_{i}}&=g_{i}^{\ast}(\phi\circ\Phi(\mathfrak{F}))\notag \\
&=\phi\circ\Phi(g_{i}^{\ast}(\mathfrak{F})) \notag \\
&=\phi\circ\Phi(\xi|_{S_{i}}) \notag \\
\Psi(\xi)|_{S_{i}}&=[\phi\circ\Phi(\xi)]|_{S_{i}} \notag
\end{align}

These maps glue together to form a global map $$\Psi(\xi)=\phi\circ\Phi(\xi)$$ on $S$. Since $\mathcal{M}^{\tilde{\mathbb{P}}}_{\vec{a}, k}$ is reduced (this follows from the smoothness), then the map $\phi$ is uniquely determined. By this we showed the following:

\begin{thm}
The scheme $\mathcal{M}^{\tilde{\mathbb{P}}}_{\vec{a}, k}$ is a coarse moduli space.
\end{thm}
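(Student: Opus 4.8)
The plan is to verify directly the two defining conditions of a coarse moduli space, assembling the ingredients built up in Sections 3--5, since essentially all the analytic content has already been produced: the ADHM presentation $\mathcal{M}^{\tilde{\mathbb{P}}}_{\vec{a}, k}=P/G$ of Section 3, the smoothness of this quotient via the closed graph of the free $G$-action in Section 4, the $S$-family monad \eqref{S-monad}, and the universal monad \eqref{univ-monad}.

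First I would treat the existence of the natural transformation $\Phi$ together with its bijectivity on reduced points. Given a noetherian reduced scheme $S$ of finite type and a class $\xi=[\mathcal{F}]\in\mathfrak{M}^{\tilde{\mathbb{P}}}_{\vec{a}, k}(S)$, one has the monad $\mathcal{M}$ on $\tilde{\mathbb{P}}\times S$ of \eqref{S-monad} canonically associated to $\mathcal{F}$ (its existence guaranteed by the vanishing $Ext^{2}(\mathcal{V},\mathcal{U})=0$ proved above). Over an affine open cover $\{S_j\}$ this monad is given by matrix-valued morphisms $(\alpha_j,\beta_j)\colon S_j\to\mathbb{H}\times\mathbb{F}$ satisfying the monad condition, hence by a morphism $f_j\colon S_j\to P$; since on overlaps the two descriptions of the same restricted monad differ by an element of $G$, the composites $\pi\circ f_j$ agree and glue to a global morphism $f\colon S\to\mathcal{M}^{\tilde{\mathbb{P}}}_{\vec{a}, k}$ depending only on $\xi$, which defines $\Phi$ as in \eqref{natural-transform}. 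For $S=\operatorname{spec}k(s)$ a reduced point, the assignment $\mathcal{E}\mapsto(\text{monad of Proposition }\ref{prop})\mapsto(\text{ADHM datum modulo }G)$ is, under the identification $\mathcal{M}^{\tilde{\mathbb{P}}}_{\vec{a}, k}=P/G$, an inverse to $\Phi$, so $\Phi$ is a bijection on reduced points, establishing the first bullet of the definition.

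Second I would establish the universal property. Let $\mathcal{R}$ carry a natural transformation $\Psi\colon\mathfrak{M}^{\tilde{\mathbb{P}}}_{\vec{a}, k}(\bullet)\to Hom(\bullet,\mathcal{R})$, and let $\eta$ be the universal family on $\tilde{\mathbb{P}}\times P$ from \eqref{univ-monad}, so $\Phi(\eta)=\pi$. The Proposition just proved gives that $\Psi(\eta)\colon P\to\mathcal{R}$ is constant along the fibres of $\pi$: for two $k(p)$-points $\rho_1,\rho_2$ with $\pi(\rho_1)=\pi(\rho_2)$ one deduces $\rho_1^{\ast}\eta\cong\rho_2^{\ast}\eta$ from the bijectivity of $\Phi$ on reduced points, and then $\Psi(\eta)(\rho_1)=\Psi(\eta)(\rho_2)$ by naturality. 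Since $\pi$ admits local sections and $\mathcal{M}^{\tilde{\mathbb{P}}}_{\vec{a}, k}$ is reduced, the fibrewise-constant $\Psi(\eta)$ descends to a morphism $\phi\colon\mathcal{M}^{\tilde{\mathbb{P}}}_{\vec{a}, k}\to\mathcal{R}$ with $\phi\circ\Phi(\eta)=\Psi(\eta)$. For an arbitrary family $\xi$ on $\tilde{\mathbb{P}}\times S$, choose an open cover $\{S_i\}$ on which $\xi|_{S_i}=g_i^{\ast}\eta$; naturality of $\Phi$ and $\Psi$ yields $\Psi(\xi)|_{S_i}=g_i^{\ast}\Psi(\eta)=g_i^{\ast}(\phi\circ\Phi(\eta))=(\phi\circ\Phi(\xi))|_{S_i}$, and these glue to $\Psi(\xi)=\phi\circ\Phi(\xi)$; reducedness of $\mathcal{M}^{\tilde{\mathbb{P}}}_{\vec{a}, k}$ forces $\phi$ to be unique. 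The step I expect to be the main obstacle is precisely the descent of $\Psi(\eta)$ along $\pi$: one needs that $\pi$ has local sections and that its fibres are exactly the $G$-orbits, so that ``constant along fibres'' is the correct descent datum — this is where the explicit quotient structure $P/G$ with free $G$-action and closed graph from Section 4 is indispensable, together with the reducedness of $\mathcal{M}^{\tilde{\mathbb{P}}}_{\vec{a}, k}$ for gluing and uniqueness; everything else is bookkeeping with the naturality squares.
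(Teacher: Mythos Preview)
Your proposal is correct and follows essentially the same route as the paper: you construct $\Phi$ via the $S$-monad \eqref{S-monad} and local lifts $f_j\colon S_j\to P$ glued modulo $G$, check bijectivity on reduced points, then use the universal family $\eta$ on $\tilde{\mathbb{P}}\times P$ together with constancy of $\Psi(\eta)$ along the fibres of $\pi$ and local sections of $\pi$ to produce the unique $\phi$. The paper's argument is organized identically, with your ``constancy along fibres'' step stated there as a separate proposition and the descent and gluing carried out exactly as you describe.
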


\bigskip

The final step is to show that $\mathcal{M}^{\tilde{\mathbb{P}}}_{\vec{a}, k}$ is fine. To do this we shall descend the universal monadic description on $\tilde{\mathbb{P}}\times P$ to a well behaved monadic description on $\tilde{\mathbb{P}}\times\mathcal{M}^{\tilde{\mathbb{P}}}_{\vec{a}, k}$. This can be realized, in our case, because the space $\tilde{\mathbb{P}}\times P$ is a $G$-space: indeed there is a natural action
\begin{equation*}
\xymatrix@R-1.5pc{G\times\tilde{\mathbb{P}}\times P \ar[r]&\tilde{\mathbb{P}}\times P \\
(g,(x,\mathcal{C}))\ar[r]&(x,g\cdot\mathcal{C})}
\end{equation*}
which induces a $G$-action on the universal monad $\mathbb{M}$, in \eqref{univ-monad}, and which descends to an action on its cohomology $\mathfrak{F}.$ Since the action is free and the isotropy subgroup is trivial at all points, we have a well defined family $\mathfrak{F}/G\longrightarrow\tilde{\mathbb{P}}\times P/G$. We put $\mathfrak{U}:=\mathfrak{F}/G$ which is a canonical family $$\mathfrak{U}\longrightarrow\tilde{\mathbb{P}}\times\mathcal{M}^{\tilde{\mathbb{P}}}_{\vec{a}, k}$$ parameterized by $\mathcal{M}^{\tilde{\mathbb{P}}}_{\vec{a}, k}$.
\begin{pr}
For any noetherian  scheme $S$ of finite type, the mapping
$$\begin{array}{cccc}\Hom(S,\mathcal{M}^{\tilde{\mathbb{P}}}_{\vec{a}, k}) & \longrightarrow & \mathfrak{M}^{\tilde{\mathbb{P}}}_{\vec{a}, k}(S)\\
\phi & \longrightarrow & \phi^{\ast}[\mathfrak{U}]=[(Id_{\tilde{\mathbb{P}}}\times\phi)^{\ast}\mathfrak{U}]\end{array}$$ is bijective.
\end{pr}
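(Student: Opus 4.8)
The plan is to prove that the map of the statement, which I write $\Theta_{S}\colon\mathrm{Hom}(S,\mathcal{M}^{\tilde{\mathbb{P}}}_{\vec{a},k})\to\mathfrak{M}^{\tilde{\mathbb{P}}}_{\vec{a},k}(S)$, $\phi\mapsto[(Id_{\tilde{\mathbb{P}}}\times\phi)^{\ast}\mathfrak{U}]$, is a two-sided inverse of the natural transformation $\Phi_{S}$ of \eqref{natural-transform}; since $\Phi$ is already known to be a natural transformation, this yields the bijectivity of $\Theta_{S}$ (and, being natural in $S$, shows that $\mathcal{M}^{\tilde{\mathbb{P}}}_{\vec{a},k}$ is a fine moduli space). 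First, $\Theta_{S}$ is well defined: $(Id_{\tilde{\mathbb{P}}}\times\phi)^{\ast}\mathfrak{U}$ is $S$-flat by flat base change, and its fibre over $s\in S$ is $\mathfrak{U}_{\phi(s)}$, a framed torsion-free sheaf of the prescribed Chern character. Fix once and for all an open cover $\{U_{\alpha}\}$ of $\mathcal{M}^{\tilde{\mathbb{P}}}_{\vec{a},k}$ over which $\pi\colon P\to\mathcal{M}^{\tilde{\mathbb{P}}}_{\vec{a},k}$ admits sections $s_{\alpha}\colon U_{\alpha}\to P$; these exist because $G$ acts freely on $P$ with trivial stabilizers and closed graph, so $\pi$ is locally a trivial $G$-bundle. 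Since $\mathfrak{U}=\mathfrak{F}/G$ is obtained by descending the cohomology $\mathfrak{F}$ of the universal monad $\mathbb{M}$ of \eqref{univ-monad} along $\tilde{\mathbb{P}}\times P\to\tilde{\mathbb{P}}\times\mathcal{M}^{\tilde{\mathbb{P}}}_{\vec{a},k}$, we have $(Id_{\tilde{\mathbb{P}}}\times\pi)^{\ast}\mathfrak{U}\cong\mathfrak{F}$ and hence $\mathfrak{U}|_{\tilde{\mathbb{P}}\times U_{\alpha}}\cong s_{\alpha}^{\ast}\mathfrak{F}$ for every $\alpha$.

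For the composite $\Phi_{S}\circ\Theta_{S}$, let $\phi\colon S\to\mathcal{M}^{\tilde{\mathbb{P}}}_{\vec{a},k}$. On $\phi^{-1}(U_{\alpha})$ we have $(Id_{\tilde{\mathbb{P}}}\times\phi)^{\ast}\mathfrak{U}\cong(s_{\alpha}\circ\phi)^{\ast}\mathfrak{F}$. As the sheaves $\mathcal{U}_{i},\mathcal{V}_{i}$ entering \eqref{A}--\eqref{B} are locally free and their formation commutes with flat base change, the monad \eqref{S-monad} canonically associated to this family over $\phi^{-1}(U_{\alpha})$ is $(s_{\alpha}\circ\phi)^{\ast}\mathbb{M}$, and its classifying morphism to $P$ in the construction of $\Phi$ is $s_{\alpha}\circ\phi$; its image in $\mathcal{M}^{\tilde{\mathbb{P}}}_{\vec{a},k}$ is therefore $\pi\circ s_{\alpha}\circ\phi=\phi|_{\phi^{-1}(U_{\alpha})}$. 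These local identities glue, so $\Phi_{S}([(Id_{\tilde{\mathbb{P}}}\times\phi)^{\ast}\mathfrak{U}])=\phi$; in particular $\Theta_{S}$ is injective.

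For the composite $\Theta_{S}\circ\Phi_{S}$, take $[\mathcal{F}]\in\mathfrak{M}^{\tilde{\mathbb{P}}}_{\vec{a},k}(S)$ and set $f:=\Phi_{S}([\mathcal{F}])$; we must show $(Id_{\tilde{\mathbb{P}}}\times f)^{\ast}\mathfrak{U}\cong\mathcal{F}$. Over an affine cover $\{S_{j}\}$ of $S$ the canonical monad \eqref{S-monad} of $\mathcal{F}$ is isomorphic to $f_{j}^{\ast}\mathbb{M}$ for morphisms $f_{j}\colon S_{j}\to P$ with $\pi\circ f_{j}=f|_{S_{j}}$, and since taking the cohomology of a monad commutes with flat base change we get $\mathcal{F}|_{S_{j}}\cong Coh(f_{j}^{\ast}\mathbb{M})\cong f_{j}^{\ast}\mathfrak{F}$. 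On $S_{j}\cap f^{-1}(U_{\alpha})$ the morphisms $f_{j}$ and $s_{\alpha}\circ f$ to $P$ have the same composite $f$ with $\pi$, hence differ by a morphism into $G$; the $G$-equivariant structure on $\mathfrak{F}$ --- the very structure along which $\mathfrak{U}=\mathfrak{F}/G$ is formed --- then yields $f_{j}^{\ast}\mathfrak{F}\cong(s_{\alpha}\circ f)^{\ast}\mathfrak{F}\cong(Id_{\tilde{\mathbb{P}}}\times f)^{\ast}\mathfrak{U}$ over $S_{j}\cap f^{-1}(U_{\alpha})$, compatibly with the framings along $l_{\infty}$. Thus $\mathcal{F}$ and $(Id_{\tilde{\mathbb{P}}}\times f)^{\ast}\mathfrak{U}$ are locally isomorphic as framed $S$-flat families; since $S$ is reduced and, by the one-dimensionality of framing-preserving homomorphisms established above, such local isomorphisms are rigid, they patch to a global isomorphism. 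Hence $[\mathcal{F}]=\Theta_{S}(f)$, and $\Theta_{S}$ is surjective.

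Combining the two composites, $\Theta_{S}$ is bijective with inverse $\Phi_{S}$, naturally in $S$. The main obstacle is the gluing in the second composite: one must check that the locally defined isomorphisms $\mathcal{F}|_{S_{j}\cap f^{-1}(U_{\alpha})}\cong(Id_{\tilde{\mathbb{P}}}\times f)^{\ast}\mathfrak{U}|_{S_{j}\cap f^{-1}(U_{\alpha})}$ agree on overlaps. This rests on three facts already available: the monad \eqref{S-monad} is \emph{canonically} attached to the family, so the identifications are choice-free up to the $G$-action; the framing along $l_{\infty}$ eliminates the residual automorphisms; and reducedness of $S$ promotes a fibrewise-compatible system of isomorphisms to an honest one.
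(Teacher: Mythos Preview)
Your argument is correct and follows the same overall strategy as the paper --- namely, that the natural transformation $\Phi$ of \eqref{natural-transform} furnishes the inverse of the map $\phi\mapsto[(Id_{\tilde{\mathbb{P}}}\times\phi)^{\ast}\mathfrak{U}]$ --- but you carry it out more carefully and with a slightly different organisation. The paper treats injectivity and surjectivity separately: injectivity is a bare pointwise argument (if $(Id_{\tilde{\mathbb{P}}}\times\phi_{1})^{\ast}\mathfrak{U}\cong(Id_{\tilde{\mathbb{P}}}\times\phi_{2})^{\ast}\mathfrak{U}$ then the fibres over every $s$ agree, hence $\phi_{1}(s)=\phi_{2}(s)$, hence $\phi_{1}=\phi_{2}$ by reducedness of $S$), and surjectivity is dispatched in one line by declaring that for $f=\Phi([\mathcal{F}])$ the family $\mathcal{F}$ is ``by construction'' the pull-back of $\mathfrak{U}$. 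You instead verify both composites $\Phi_{S}\circ\Theta_{S}$ and $\Theta_{S}\circ\Phi_{S}$ explicitly, using local sections of $\pi\colon P\to\mathcal{M}^{\tilde{\mathbb{P}}}_{\vec a,k}$ and the $G$-equivariant structure on $\mathfrak{F}$; in particular you make visible the gluing step (that the local isomorphisms $\mathcal{F}|_{S_{j}}\cong f_{j}^{\ast}\mathfrak{F}\cong(Id_{\tilde{\mathbb{P}}}\times f)^{\ast}\mathfrak{U}|_{S_{j}}$ patch) and justify it via the rigidity coming from the one-dimensionality of $Hom^{\Phi}$. This is exactly the content the paper's one-line surjectivity is implicitly relying on, so your version is a genuine expansion rather than a different method; what it buys is that the reader sees where reducedness of $S$, the framing, and the canonical nature of the monad \eqref{S-monad} each enter.
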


\begin{proof}

\underline{Injectivity}:

\bigskip

Let $\phi_{1}, \phi_{2}: S\longrightarrow\mathcal{M}^{\tilde{\mathbb{P}}}_{\vec{a}, k}$ such that $(Id_{\tilde{\mathbb{P}}}\times\phi_{1})^{\ast}\mathfrak{U}\cong(Id_{\tilde{\mathbb{P}}}\times\phi_{2})^{\ast}\mathfrak{U}$
then for every point $s\in S$, one has $\mathfrak{U}(\phi_{1}(s))=\mathfrak{U}(\phi_{2}(s))$. Since the torsion-free sheaf $\mathfrak{U}(\phi_{i}(s))$ is the one given by the ADHM data associated to the point $\phi_{i}(s)\in \mathcal{M}^{\tilde{\mathbb{P}}}_{\vec{a}, k}$, then $\phi_{1}(s)=\phi_{2}(s)$ for every point $s\in S$, thus $\phi_{1}=\phi_{2}.$

\bigskip

\underline{Surjectivity}:

\bigskip

Given a family $\mathcal{F}$ parameterized by $S,$ one has the morphism $\phi=\Phi(\mathcal{F})$ given by the natural transformation \eqref{natural-transform}. Then $\mathcal{F}$ is the pull-back of the family $\mathfrak{U}$ parameterized by $\mathcal{M}^{\tilde{\mathbb{P}}}_{\vec{a}, k}$.
\end{proof}

This finishes the proof of the following

\begin{thm}
The scheme $\mathcal{M}^{\tilde{\mathbb{P}}}_{\vec{a}, k}$ is a fine moduli space for the moduli functor $\mathfrak{M}^{\tilde{\mathbb{P}}}_{\vec{a}, k}.$
\end{thm}

\bigskip\bigskip


\begin{thebibliography}{10}

\bibitem{Ancona} V. Ancona, G. Ottaviani, "Some applications of Beilinson's theorem to projective spaces and quadrics", Forum Math. 3 (1991), no. 2, 157-176.
\bibitem{Ancona1} V. Ancona, G. Ottaviani, "An introduction to the derived categories and the theorem of Beilinson", Atti Accad. Peloritana Pericolanti Cl. Sci. Fis. Mat. Natur. 67 (1989), 99-110.
\bibitem{ADHM} M. F. Atiyah, N. J. Hitchin, V.G. Drinfel'd, Yu. I. Manin, "Construction of Instantons", Phys. Lett. A 65 (1978), no. 3, 185-187.
\bibitem{barth} W. Barth, C. Peters, A. Van de Ven, "Compact complex surfaces", 4. Springer-Verlag, Berlin, 1984. x+304 pp. ISBN: 3-540-12172-2.
\bibitem{beauville} A. Beauville, "Surfaces algébriques complexes", Astérisque, No. 54. Société Mathématique de France, Paris, 1978. iii+172 pp.
\bibitem{bruzzo} U. Bruzzo, D. Markushevich, "Moduli of framed sheaves on projective surfaces", arXiv:0906.1436 [math.AG].
\bibitem{Buch} N.P. Buchdahl, "Monads and bundles on rational surfaces", Rocky Mountains J. Math, V 34, no. 2, 2004.
\bibitem{Godement} R. Godement, "Topologie alg\'ebrique et th\'eorie des faisceaux", Hermann, Paris (1958).
\bibitem{Grothendieck0} A. Grothendieck, "Sur quelques points d'algèbre homologique", T\^ohoku. Math. Journ. I, t. IX (1956), p. 119-221.
\bibitem{Grothendieck} A. Grothendieck, "El\'ements de g\'eom\'etrie alg\'ebrique III, 1", Publ. Math. IHES, no 11 (1961), 5-165.
\bibitem{Hart} R. Hartshorne, "Algebraic geometry", Springer-Verlag, GTM, 52.
\bibitem{Hart1} R. Hartshorne, "Residues and duality", LNM 20, Springer, 1966.
\bibitem{HL} M. Hauzer, A. Langer, "Moduli Spaces of Framed Perverse Instanton Sheaves on $\mathbb{P}^3$", Glasg. Math. J. 53 (2011), no. 1, 51-96.
\bibitem{Huy} D. Huybrechts, M. Lehn, "The geometry of moduli spaces", Aspects. Math, E 31, Friedr. Vieweg $\&$ Sohn, Braunschweig, 1997.
\bibitem{Huy1} D. Huybrechts, M. Lehn, "Framed modules and their moduli", Internat. J. Math. 6 (1995), no. 2, 297-324.
\bibitem{Huy2} D. Huybrechts, M. Lehn, "Stable pairs on curves and surfaces",  J. Algebraic Geom. 4 (1995), no. 1, 67-104.
\bibitem{King} A. King, "Instantons and holomorphic bundles on the blown-up plane", Worcester college, Oxford, Ph. D Thesis 1989.
\bibitem{Kleiman} S. Kleiman, "Relative duality for quasicoherent sheaves", Compositio Math. 41 (1980), no. 1, 39-60.
\bibitem{Naka} H. Nakajima, "Lectures on Hilbert schemes of points on a surface", University Lecture Series, 18. American Mathematical Society, Providence, RI, 1999.
\bibitem{Naka1} H. Nakajima, K. Yoshioka, "Instanton counting on blow-up I. 4-dimensional pure gauge theory."  Invent. Math.  162 (2005),  no. 2, 313-355.
\bibitem{newstead} P. E. Newstead, "Introduction to moduli problems and orbit spaces", Tata Institute of Fundamental Research Lectures on Mathematics and Physics, 51. Tata Institute of Fundamental Research, Bombay; Narosha Publishing House, New Delhi, 1978.
\bibitem{Okonek} C. Okonek, M. Schneider, H. Spindler, "Vector bundles on complex projective spaces", Progress in mathematics 3, Birkhauser, Boston, 1980.
\bibitem{Sesha} C. S. Seshadri, "Some results on the quotient space by an algebraic group of automorphisms"  Math. Ann. 149 (1963), 286-301.

\end{thebibliography}
\end{document}